\renewcommand{\PrintDOI}[1]{%
  \href{http://dx.doi.org/#1}{{\tt DOI:#1}}%
}
\renewcommand{\eprint}[1]{#1}
\theoremstyle{plain}
\newtheorem{theorem}{Theorem}[section]
\newtheorem{proposition}[theorem]{Proposition}
\newtheorem{lemma}[theorem]{Lemma}
\newtheorem{corollary}[theorem]{Corollary}
\newtheorem{theoremA}{Theorem}
\theoremstyle{definition} 
\newtheorem{definition}[theorem]{Definition}
\theoremstyle{remark}
\newtheorem{example}[theorem]{Example}
\newtheorem{remark}[theorem]{Remark}
\numberwithin{equation}{section}
\mathchardef\mhyph="2D
\newcommand{\rE}{\mathrm{E}}
\newcommand{\rH}{\mathrm{H}}
\newcommand{\rK}{\mathrm{K}}
\newcommand{\cB}{\mathcal{B}}
\newcommand{\cC}{\mathcal{C}}
\newcommand{\cD}{\mathcal{D}}
\newcommand{\cF}{\mathcal{F}}
\newcommand{\cI}{\mathcal{I}}
\newcommand{\cK}{\mathcal{K}}
\newcommand{\cN}{\mathcal{N}}
\newcommand{\cO}{\mathcal{O}}
\newcommand{\cP}{\mathcal{P}}
\newcommand{\cR}{\mathcal{R}}
\newcommand{\cW}{\mathcal{W}}
\newcommand{\dC}{\mathbb{C}}
\newcommand{\dH}{\mathbb{H}}
\newcommand{\dN}{\mathbb{N}}
\newcommand{\dQ}{\mathbb{Q}}
\newcommand{\dR}{\mathbb{R}}
\newcommand{\dS}{\mathbb{S}}
\newcommand{\dT}{\mathbb{T}}
\newcommand{\dZ}{\mathbb{Z}}
\newcommand{\bA}{\mathbf{A}}
\newcommand{\bE}{\mathbf{E}}
\newcommand{\bF}{\mathbf{F}}
\newcommand{\bH}{\mathbf{H}}
\newcommand{\bK}{\mathbf{K}}
\newcommand{\bM}{\mathbf{M}}
\newcommand{\Top}{\mathsf{Top}}
\newcommand{\sSet}{\mathsf{sSet}}
\newcommand{\Ab}{\mathrm{Ab}}
\newcommand{\Spec}{\mathsf{Spec}}
\newcommand{\TopSpec}{\mathsf{TopSpec}}
\newcommand{\TopCGSpec}{\mathsf{Top}_\mathrm{cg}\mathsf{Spec}}
\newcommand{\Emb}{\mathrm{Emb}}
\newcommand{\Span}{\mathsf{Span}}
\newcommand{\Sets}{\mathrm{Sets}}
\newcommand{\FinSets}{\mathrm{FinSets}}
\newcommand{\Path}{\mathsf{Path}}
\newcommand{\CstAlg}{\mathrm{C}^*\mhyph\mathrm{alg}}
\newcommand{\GrCorr}{\mathsf{GrCorr}}
\newcommand{\PrCorr}{\mathsf{Corr}_{\mathrm{pr}}}
\newcommand{\PrGrCorr}{\mathsf{GrCorr}_{\mathrm{pr}}}
\newcommand{\id}{\mathrm{id}}
\newcommand{\ho}{\mathrm{ho}}
\DeclareMathOperator*{\hocolim}{hocolim}
\DeclareMathOperator{\rank}{rank}
\newcommand{\KK}{\mathrm{KK}}
\newcommand{\kk}{\mathrm{kk}}
\newcommand{\icKK}{\mathsf{KK}}
\newcommand{\ickk}{\mathsf{kk}}
\newcommand{\ch}{\mathrm{ch}}
\newcommand{\hc}{\mathrm{hc}}
\newcommand{\norm}[1]{\left\| #1\right\|}
\newcommand{\absv}[1]{\left| #1\right|}
\newcommand{\gr}[1]{\lvert #1\rvert}
\newcommand{\hatotimes}{\mathbin{\hat{\otimes}}}
\newcommand{\opo}{\mathrm{op}}
\newcommand{\KU}{\mathbf{KU}}
\newcommand{\Ex}{\mathrm{Ex}}
\newcommand{\Cl}{\mathrm{Cl}}
\newcommand{\pret}{\mathrm{pr,\acute{e}t}}
\newcommand{\TDLC}{\mathrm{TDLC}}
\newcommand{\TDC}{\mathrm{TDC}}
\newcommand{\LCH}{\mathrm{LCH}}
\newcommand{\ND}{N^{\mathrm{D}}}
\newcommand{\BGL}{\mathrm{BGL}}
\DeclareMathOperator{\Tor}{Tor}
\DeclareMathOperator{\ran}{ran}
\DeclareMathOperator{\Fun}{Fun}
\DeclareMathOperator{\Ind}{Ind}
\DeclareMathOperator{\Res}{Res}
\DeclareMathOperator{\Mor}{Mor}
\DeclareMathOperator{\Map}{Map}
\DeclareMathOperator{\map}{\mathbf{map}}
\DeclareMathOperator{\Sing}{Sing}
\DeclareMathOperator{\Hom}{Hom}
\DeclareMathOperator*{\colim}{colim}
\DeclareMathOperator{\uCom}{uCom}
\author{Valerio Proietti}
\address{Department of Mathematics, University of Oslo, P.O. box 1053, Blindern, 0316 Oslo, Norway}
\email{valeriop@math.uio.no}
\author{Makoto Yamashita}
\address{Department of Mathematics, University of Oslo, P.O. box 1053, Blindern, 0316 Oslo, Norway}
\email{makotoy@math.uio.no}
\title{Chern character for torsion-free ample groupoids}
\begin{document}

\begin{abstract}
For an ample groupoid with torsion-free stabilizers, we construct a Chern character map going from the domain of the Baum--Connes assembly map of $G$ to the Crainic--Moerdijk homology groups of $G$ with rational coefficients.
Assuming the (rational) Baum--Connes conjecture, this implies that the operator $\rK$-groups of the groupoid C$^*$-algebra are rationally isomorphic to the periodicized groupoid homology groups, confirming a variation of Matui's HK conjecture.
Our construction hinges on the recent $\infty$-categorical viewpoint on bivariant $\rK$-theory.
We also present applications to the homology of hyperbolic dynamical systems, the homology of topological full groups, the homotopy type of the algebraic $\rK$-theory spectrum of ample groupoids, and the Elliott invariant of classifiable C$^*$-algebras.
\end{abstract}

\makeatletter
\@namedef{subjclassname@2020}{\textup{2020} Mathematics Subject Classification}
\makeatother

\date{v2: April 9, 2026; v1: September 9, 2025}
\subjclass[2020]{46L85; 19K35, 46M20, 37B02}
\keywords{groupoid, C$^*$-algebra, $K$-theory, homology, Baum--Connes conjecture, HK conjecture, Chern character, topological dynamics.}

\maketitle
\setcounter{tocdepth}{1}
\tableofcontents

\section*{Introduction}

The Chern--Connes character and the Baum--Connes conjecture are cornerstones of noncommutative geometry that connect operator $\rK$-groups of C$^*$-algebras, objects rooted in functional analysis, with geometric and algebro-topological concepts such as homological invariants of classifying spaces~\cites{BCH:class,connes:ncg}.
While these are best understood in the context of groups and manifolds separately via connections to more classical invariants, various generalizations have been proposed as noncommutative geometry matured as a field.
Notably, the framework of topological groupoids is a natural generalization of both spaces and groups, and encompasses more general dynamical systems such as (for example) foliated manifolds and systems arising in symbolic dynamics.

On one hand, although the Baum--Connes conjecture is known to fail in general for groupoids~\cite{MR1911663}, it is still confirmed for a broad class of groupoids by Tu's result~\cite{tu:moy} based on a breakthrough by Higson and Kasparov~\cite{higkas:bc}.
On the other hand, when we can reduce the system to an étale groupoid, the homology theory developed by Crainic and Moerdijk~\cite{cramo:hom} provides a promising homological algebraic counterpart.
Indeed, in the case of smooth groupoids, the Chern--Connes character map to the cyclic homology of smooth convolution algebra factors through this homology~\cite{MR1706117}.

Besides the interest from noncommutative geometry, groupoids turned out to be useful in the structure theory of C$^*$-algebras too.
Thanks to the work of Kumjian and Renault~\cites{kum:diag,ren:cartan}, any separable C$^*$-algebra $A$ with a Cartan subalgebra admits a groupoid model, i.e., $A$ is isomorphic to a (possibly twisted) groupoid C$^*$-algebra.
By a recent result of Li~\cite{li:diag}, this class contains the algebras targeted by Elliott's classification program.
Moreover, the Baum--Connes conjecture holds for these groupoids thanks to amenability.
From this viewpoint, it is desirable to have a systematic way to study $\rK$-groups of such C$^*$-algebras through groupoid homology.

\smallskip

In this paper we look at such problems for torsion-free ample groupoids, namely étale groupoids with totally disconnected unit space $G^{(0)}$, and torsion-free isotropy groups at each point of $G^{(0)}$.
This class of groupoids is prevalent in the theory of dynamical systems, including symbolic dynamics, generalized solenoids, self-similar group actions, tiling spaces, and a class of hyperbolic dynamical systems (Smale spaces)~\cites{kelput:til,nekra:crelle,bm:hom}. 

Our main contribution is the construction of a generalized Chern character map, going from the domain of the Baum--Connes assembly map for $G$, which we denote below by $\rK_*^{\mathrm{top}}(G; C_0(G^{(0)}))$, to the homology groups of $G$ with coefficients in the constant sheaf of rational numbers, denoted $\rH_{\ast}(G; \dQ)$.

\begin{theoremA}[Theorem~\ref{thm:ch-ab}]\label{thm:a-def-ch}
Let $G$ be a second-countable, locally compact, Hausdorff, and ample groupoid. Assume that the stabilizers $G^x_x$ for $x \in G^{(0)}$ are torsion-free.
There is a homomorphism of abelian groups
\begin{equation*}%\label{eq:chdef}
\rK_i^{\mathrm{top}}(G; C_0(G^{(0)})) \to \bigoplus_{k\in \dN} \rH_{i + 2k}(G; \dQ) \quad (i = 0, 1),
\end{equation*}
which becomes an isomorphism after rationalization.
\end{theoremA}

The leading application is a description of the rationalized $\rK$-groups for the groupoid C$^*$-algebra under the assumption that the rational Baum--Connes conjecture holds.

\begin{theoremA}[Corollary~\ref{cor:hk-iso}]\label{thm:main-thm}
In addition to the assumptions of Theorem~\ref{thm:a-def-ch}, assume that $G$ satisfies the rational Baum--Connes conjecture.
Then the Chern character of $G$ induces an isomorphism
\begin{equation*}%\label{eq:chiso}
\rK_i(C^*_r G)\otimes \dQ \cong \bigoplus_{k\in \dN} \rH_{i+2k}(G; \dQ).
\end{equation*}
\end{theoremA}

This proves a variation of Matui's \emph{HK conjecture}~\cite{matui:hkpub}*{Conjecture 2.6}, whose original version did not involve rationalization and had different assumptions on the groupoid $G$.
This conjecture has attracted a great deal of interest among the C$^*$-algebra community in recent years.

In the positive direction, Matui himself proved it for groupoids arising from shifts of finite type, AF groupoids, and $\dZ$-actions on the Cantor set~\cite{matui:hk}.
This was further carried out for certain Katsura--Exel--Pardo groupoids~\cite{ort:kep}, one-dimensional solenoids~\cite{yi:sol}, and the Renault--Deaconu groupoids of low-rank~\cite{fkps:hk}.
For general ample groupoids with torsion-free stabilizers, we obtained a convergent spectral sequence
\begin{equation}\label{eq:ss}
E^2_{pq} = \rH_p(G; \rK_q(\dC))\Rightarrow \rK_{p + q}^{\mathrm{top}}(G; C_0(G^{(0)}))
\end{equation}
based on the triangulated structure of equivariant $\KK$-theory~\cite{py:part-one}.
This proves the HK isomorphism for all groupoids as in Theorem~\ref{thm:main-thm}, with an additional assumption of low homological dimension.
Moreover, a concrete criterion for the vanishing of higher homology was given in~\cite{bdgw:matui}.

Despite these positive results, the original formulation of the HK conjecture fails in general due to counterexamples by Scarparo and Deeley~\cites{scarparo:homod,deeley:counthk}.
Soon afterwards the suggested fix by rationalization emerged as a ``folk conjecture'', see~\cite{bdgw:matui}*{Remark 5.12}.
Our result shows that, for the class of groupoids considered, this conjecture is indeed true and~\eqref{eq:ss} is rationally degenerate at the $E^2$-sheet.

Our second application is for topological full groups.
The topological full group $F(G)$ of an ample groupoid $G$ encodes the global symmetries of the associated dynamical system, obtained by piecing together the local symmetries encoded in the topological groupoid $G$ we start with.
They were first studied in the setting of Cantor minimal systems~\cite{kri:dim}, and since then they have been generalized in the context of Bratteli diagrams~\cite{gps:full}, shifts of finite type~\cite{matui:fgsft}, self-similar groups~\cite{nekra:fp}, and many other systems, leading to ground-breaking solutions of open problems in geometric group theory~\cites{nekra:int,swz:typef}.
They also offer a dynamical perspective on important objects in group theory, such as the Thompson group $V$ and its variations~\cite{nekra:v}.

Matui's \emph{AH conjecture}~\cite{matui:hkpub}*{Conjecture 2.9} presented a mysterious relation between the groupoid homology of $G$ and the group homology of $F(G)$ in low degrees.
A recent work of Li~\cite{li:algk} gives a satisfactory answer to this problem, by connecting the algebraic $\rK$-theory spectrum of the permutative category that naturally relates $G$ and $F(G)$, denoted $\bK^{\mathrm{alg}}(G)$, to the corresponding homology groups.

The following result, which follows from a combination of Li's results and Theorem~\ref{thm:main-thm}, relates the algebraic $\rK$-theory and operator $\rK$-groups, in a way analogous to Karoubi's conjecture on the algebraic $\rK$-theory of C$^*$-algebras (proved by Suslin and Wodzicki~\cite{suswod:karoubi}).

\begin{theoremA}[Corollary~\ref{cor:ktop-kalg}]%\label{thm:main-fg}
Let $G$ be a groupoid satisfying the assumptions in Theorem~\ref{thm:main-thm}. There is an isomorphism of vector spaces
\[
\rK_i(C^*_r G) \otimes \dQ \cong  \bigoplus_{k\in \dZ}\pi_{i+2k} (\bK^{\mathrm{alg}}(G))\otimes \dQ.
\]
\end{theoremA}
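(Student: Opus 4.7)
The strategy is to use Theorem~\ref{thm:main-thm} to reduce both sides of the claimed isomorphism to rational groupoid homology (with a 2-periodicization). The left-hand side is handled directly: Theorem~\ref{thm:main-thm} gives
\[
K_*(C^*_r G) \otimes \dQ \cong \bigoplus_{k\in \dZ} H_{*+2k}(G,\dQ),
\]
since $G$ satisfies the hypotheses listed there. What remains is to identify $\bigoplus_{k}\pi_{*+2k}(\dK^{\mathrm{alg}}(G))\otimes \dQ$ with the same target.

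For the right-hand side I would appeal to Li's description~\cite{li:algk} of $\dK^{\mathrm{alg}}(G)$. Concretely, Li constructs a homological/Atiyah--Hirzebruch-type spectral sequence (or equivalent Postnikov filtration) of the form
\[
E^2_{p,q} = H_p(G, \pi_q(\mathbb{S})) \Longrightarrow \pi_{p+q}(\dK^{\mathrm{alg}}(G)),
\]
where the coefficient spectrum is (rationally) the sphere spectrum. Rationalizing, the groups $\pi_q(\mathbb{S})\otimes\dQ$ vanish for $q\neq 0$ and are $\dQ$ for $q=0$, so the spectral sequence collapses at $E^2$ and gives a natural isomorphism $\pi_n(\dK^{\mathrm{alg}}(G))\otimes\dQ \cong H_n(G,\dQ)$. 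Summing over a 2-periodic shift yields
\[
\bigoplus_{k\in \dZ}\pi_{*+2k}(\dK^{\mathrm{alg}}(G))\otimes \dQ \cong \bigoplus_{k\in \dZ} H_{*+2k}(G,\dQ),
\]
and combining with the previous display finishes the proof.

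Two points warrant particular care. First, one must verify that the rational identification extracted from Li's results is the one compatible with the 2-periodicized sum appearing in the statement — i.e., that the coefficient spectrum is (rationally) a shift of $\dQ$ concentrated in degree zero rather than, say, a nontrivial rational $K$-theory spectrum; this is essentially a check on the definition of the permutative category underlying $\dK^{\mathrm{alg}}(G)$. Second, for naturality of the final isomorphism one would like the composite zig-zag to be induced by a genuine map of spectra, which is natural given that our Chern character (Theorem~\ref{thm:a-def-ch}) is itself defined at the spectrum level; matching it to the Atiyah--Hirzebruch filtration on $\dK^{\mathrm{alg}}(G)$ is the main technical point, but it is not needed if one only wants an abstract isomorphism of $\dQ$-vector spaces as stated.

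The main obstacle is therefore not analytic but bookkeeping: extracting from~\cite{li:algk} the precise rational form of the filtration on $\pi_*(\dK^{\mathrm{alg}}(G))$ and identifying its associated graded with $H_*(G,\dQ)$. Once this is in place, the corollary is a formal consequence of Theorem~\ref{thm:main-thm}.
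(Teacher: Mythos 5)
Your proposal is correct and follows essentially the same route as the paper: the left-hand side is handled by Theorem~\ref{thm:main-thm}, and the right-hand side by identifying $\pi_n(\dK^{\mathrm{alg}}(G))\otimes\dQ$ with $H_n(G,\dQ)$ via Li's results. The only cosmetic difference is that the paper phrases this last identification as the rational Hurewicz isomorphism $\pi_n(\dK^{\mathrm{alg}}(G)\wedge H\dQ)\cong H_n(\dK^{\mathrm{alg}}(G),\dQ)$ combined with Li's Theorem 4.18 (which computes the spectrum homology of $\dK^{\mathrm{alg}}(G)$ as groupoid homology), rather than as the rational collapse of an Atiyah--Hirzebruch-type spectral sequence with sphere coefficients --- these are equivalent statements.
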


Let us give a sketch of the ideas behind the proof of our main theorem.

In the usual paradigm of noncommutative geometry, one would try to find a ``smooth subalgebra'' $\mathcal{A} \subset C^*_r G$ which gives $\rK_i(\mathcal{A}) \cong \rK_i(C^*_r G)$ on one hand, while having an interesting periodic cyclic homology $\mathrm{HP}_i(\mathcal{A})$ (say, close to the right hand side of Theorem~\ref{thm:main-thm}) on the other, so that the Chern--Connes character map $\rK_i(\mathcal{A}) \to \mathrm{HP}_i(\mathcal{A})$ would give an interesting map on $\rK_i(C^*_r G)$.

Since these are not yet well understood in the setting of ample groupoids, we instead exploit homotopy theory and the recent $\infty$-categorical viewpoint on equivariant $\KK$-theory to ``assemble'' the classical Chern character for the spaces involved in the left hand side.
It has the dual advantage of directly targeting groupoid homology and producing a rational isomorphism essentially by construction.

The first task, which is more formal, is to understand the rationalization of the generalized homology theories involved.
At the level of spectra representing these theories, this is encoded by the stable equivalence of spectra
\begin{equation}\label{eq:chern-intro}
\KU\wedge \bH\dQ \cong \bH\dQ [u,u^{-1}],
\end{equation}
with a formal variable $u$ of degree $2$, comparing the complex topological $\rK$-theory spectrum $\KU$ and the $2$-periodic Eilenberg--MacLane spectrum of the rational numbers.

Under the Dold--Kan correspondence, the (rational) groupoid homology can be interpreted as the homotopy colimit of the diagram of Eilenberg--MacLane spectra for the function spaces $C_c(G^{(n)}, \dQ)$.
Thus, our desired isomorphism should be regarded as a map
\begin{equation}\label{eq:cand-chern}
\rK_i^{\mathrm{top}}(G; C_0(G^{(0)})) \otimes \dQ \to \pi_i\Bigl(\hocolim_{\Delta^\opo} \bH C_c(G^{(\bullet)},\dQ[u,u^{-1}])\Bigr).
\end{equation}
This points to an interpretation of the left-hand side above as a homotopy colimit as well.
This leads to the second task, which is to understand the spaces involved in the left hand side $\rK_*^{\mathrm{top}}(G; C_0(G^{(0)}))$.

Thanks to the torsion-freeness assumption, (at least morally) these should correspond to the classifying space of $G$.
This can be made precise following~\citelist{\cite{py:part-one}\cite{val:kthpgrp}}: there is a certain $G$-C$^*$-algebra $P(C_0(G^{(0)}))$ given as a suitable \emph{sequential} homotopy colimit of $G$-C$^*$-algebras we can build from the simplicial system $(C_0(G^{(n+1)}))_{n = 0}^\infty$, following the recipe of~\citelist{\cite{nestmeyer:loc}\cite{mey:two}} in the triangulated category $\KK^G$.
Then $\rK_i(P(C_0(G^{(0)})) \rtimes G)$ becomes a model of $\rK_i^{\mathrm{top}}(G; C_0(G^{(0)}))$.

The comparison~\eqref{eq:cand-chern} would follow if we could present this sequential colimit as a homotopy colimit over the simplex category $\Delta^\opo$.
However, homotopy colimits of general diagrams in a triangulated category is a delicate problem which requires analysis of higher Toda brackets~\cite{bn:tot}.

We work around this problem by promoting $\KK^G$ to a stable $\infty$-category, following the blueprint provided in~\citelist{\cite{landnik:comp}\cite{bel:kkstable}} for ordinary and equivariant $\KK$-theory for discrete groups.
This allows us to make sense of the $\infty$-colimit of the $G$-simplicial object $C_0(G^{(\bullet+1)})$ in the stable $\infty$-category $\icKK^G$. 

In fact, this presentation applies to the more general case where $G$ has torsion in its isotropy, and is acting on a (separable) C$^*$-algebra $A$. In this case $P(A)$ is the $\infty$-colimit of a simplicial system whose building blocks are induced objects of the form $\Ind_H^G(A)$ for proper open subgroupoids $H\subseteq G$.
In the case of discrete groups, this is analogous to the Davis--Lück presentation of the left hand side of the Baum--Connes conjecture as a homotopy colimit over the orbit category~\cite{dl:bc}. However, since a theory of Bredon homology for groupoids is currently not available, the rest of the argument assumes $G$ is torsion-free, which implies we can work with the simpler $G$-simplicial system $C_0(G^{(\bullet+1)})$.

Using the Morita equivalence between the space $G^{(n)}$ as a groupoid and the transformation groupoid $G \ltimes G^{(n + 1)}$, and by comparing homotopy colimits in $\KK^G$ with $\infty$-colimits in $\icKK^G$, we see that the simplicial system of map-spectra $\map_{\icKK}(\dC,C_0(G^{(\bullet)}))$ leads to the left-hand side in~\eqref{eq:cand-chern} via homotopy colimit.
Then the problem is reduced to finding an isomorphism
\begin{equation}\label{eq:hl-chern}
\hocolim_{\Delta^\opo} \map_{\icKK}(\dC,C_0(G^{(\bullet)}))\wedge \bH\dQ \to \hocolim_{\Delta^\opo} \bH C_c(G^{(\bullet)},\dQ[u,u^{-1}]).
\end{equation}
Now, the left hand side comes from a functor defined in terms of correspondences of spaces and associated Hilbert modules (which we call \emph{Atiyah transfer}), while the right hand side comes from ``summation over fibers'' induced by étale maps, which act on compactly supported function spectra.

To carry out the above comparison, we look at the $\infty$-functors from an $\infty$-category of totally disconnected spaces to the semiadditive $\infty$-category of spectra.
To be precise, let us denote by $\Span_{\pret}(\TDLC)$ the $(2,1)$-category of spans of totally disconnected, locally compact, Hausdorff spaces $X \leftarrow Z \to Y$, with left leg a proper map, and right leg an étale map.
We then have the following intermediate result, which is of its own interest.

\begin{theoremA}[Theorem~\ref{thm:additive-infty-ftrs-on-tdlc} and Proposition~\ref{prop:comp-supp-cont-functors-as-Lan}]\label{thm:intro-thm-D}
Let $\cC$ be a semiadditive $(\infty,1)$-category.
The $\infty$-functors
\[
F\colon \Span_{\pret}(\TDLC) \to \cC
\]
which are product-preserving, compatible with finite quotients, and compactly supported, can be classified by the value of $F$ at the singleton space.
\end{theoremA}

This is comparable to the fact that the span category of finite sets $\Span(\FinSets)$, which naturally sits as a full subcategory of $\Span_{\pret}(\TDLC)$, is the universal singly generated semiadditive $(\infty,1)$-category.

Having such classification at hand, the desired comparison~\eqref{eq:hl-chern} essentially reduces to~\eqref{eq:chern-intro}, combined with the well-established identification $\map_{\icKK}(\dC,\dC)\cong \KU$ (see Section~\ref{subsec:kthspec} for more details). 

\medskip

While our primary results addresses problems originating in operator algebras and topological dynamics, the underlying proof techniques are largely drawn from algebraic topology and homotopy theory.
This naturally invites the question of what can be done beyond the setting of totally disconnected spaces and ample groupoids.
In fact, most of the above ingredients continue to make sense in straightforward ways, but with several interesting twists.

First, the groupoid homology $\rH_i(G; \dQ)$ will take integer grading $i \in \dZ$, reflecting the nontrivial cohomological structure in the ``space'' direction.
Second, the function spectra $\Map_c(Y, \bE)$ should be considered as \emph{higher cosheaves} as developed in~\cite{lur:ht}, subject to the six-functor formalism of higher sheaves.
We will expand on these ideas in our upcoming work~\cite{py:upcoming}, extending our main results to the non-ample case.
Nevertheless, we believe an important aspect of the present paper is that the study of ample groupoids is possible ``just with algebraic topology'', without relying on the theory of higher sheaves, thanks to the zero-dimensional nature of the building blocks.

\medskip

Let us describe the structure of our paper.
In Section~\ref{sec:prelim}, we set our conventions on homotopy theoretic and $\KK$-theoretic constructions, together with some basic material on groupoid homology for ample groupoids.
In Section~\ref{sec:span-tot-disc}, we analyze categories of spans of totally disconnected spaces, and functors out of such categories.
In Section~\ref{sec:k-theory-transfer}, we look at the transfer maps from the bivariant K-theory of C$^*$-algebras.
In Section~\ref{sec:bc-conj}, we review the formulation of the Baum--Connes conjecture via localization of categories~\cites{nestmeyer:loc,mey:two,val:kthpgrp}, and relate it to the stable $\infty$-category $\icKK^G$.
We complete the proof of our main result following the strategy sketched above in Section~\ref{sec:grchern}.
We then discuss the applications and connections to more general settings in Section~\ref{sec:appl}.
In Appendix~\ref{sec:kkg-stable}, we review some categorical constructions around groupoid equivariant $\KK$-theory, and prove that $\icKK^G$ is a stable $\infty$-category when $G$ is a second countable, locally compact, Hausdorff groupoid with a Haar system.
In Appendix~\ref{sec:transfer-maps}, we give a detailed construction of Atiyah transfer for equivariant $\KK$-theory.

\subsection*{Acknowledgements}
We would like to thank B.~Dünzinger and U.~Bunke for useful conversations on equivariant $\rK$-theory and transfer, and A.~Krause for an inspiring discussion leading to the current exposition of Section~\ref{sec:span-tot-disc}.
We would like to thank S.~Bhattacharjee, C.~Bruce, M.~Goffeng, Y.~Kubota, M.~Land, K.~Li, A.~Miller, M.~Whittaker, R.~Willett for stimulating and encouraging conversations.

V.P. acknowledges support from Marie Skłodowska-Curie Postdoctoral Fellowship (Horizon Europe, European Commission, Project No. 101063362).
M.Y.: this research was funded, in part, by The Research Council of Norway [project 324944].

\section{Preliminaries}\label{sec:prelim}

Let us first review the fundamental concepts used throughout the article.
We start with a review of concepts from algebraic topology and homotopy theory in Sections~\ref{subsec:inftycat}--\ref{subsec:spec} (the reader with background in these subjects can safely skip these sections).
We introduce the relevant models for topological $\rK$-theory in Sections~\ref{subsec:kthspec} and~\ref{subsec:cscsec}.
While we opted for an exposition based on sequential topological spectra to make it accessible for operator algebraists, it should not be difficult to read the paper with only a model-independent picture in mind.
In Sections~\ref{sec:grpds-c-star-algs} and~\ref{subsec:correspondences} we review constructions from topological groupoids and C$^*$-algebras, which can be safely skipped by operator algebraists.
Finally, in Section~\ref{sec:gr-hom} we review the Crainic--Moerdijk homology adapted to the setting of ample groupoids.

\subsection{Simplicial homotopy and $\infty$-categories}\label{subsec:inftycat}

Our references for $\infty$-categories are~\cites{lur:ht,lur:ha,land:inftybook}.
Let us fix conventions from simplicial homotopy theory.
We denote the simplex category, i.e., the category of nonempty ordered finite sets, by $\Delta$.
A simplicial set is a contravariant functor $X \colon \Delta \to \Sets$.
We denote the category of simplicial sets by $\sSet$.
The set $\{0, 1, \dots, n \}$ with natural order is denoted by $[n]$, and the corresponding simplicial set is denoted by $\Delta[n]$.
We freely identify the elements of $X_n = X([n])$ with the maps of simplicial sets $\Delta[n] \to X$.
The standard $n$-simplex, which can be considered as the thin geometric realization of $\Delta[n]$, is denoted by $\Delta^n$.

For $0 \le k \le n$, we denote the $k$-th horn in $\Delta[n]$ as $\Lambda^k[n]$.
By definition, it is is the simplicial set obtained from the boundary of the $n$-simplex $\Delta^n$ by discarding its $k$-th face.
A \emph{Kan complex} is a simplicial set $X$ such that any map of simplicial sets $\Lambda^k[n] \to X$ extends to a map $\Delta[n] \to X$, i.e., an element of $X_n$.

We consider $(\infty, 1)$-categories modeled by \emph{quasicategories}, i.e., simplicial sets $\cC$ satisfying the weak Kan condition: any map of simplicial sets $\Lambda^k[n] \to \cC$ for $0 < k < n$ extends to a map $\Delta[n] \to \cC$.
If $\cC$ is a usual category, its nerve $N(\cC)$ is a simplicial set which is a quasicategory.
In the rest of the paper, when it is clear from context that we are viewing ordinary categories as $\infty$-categories through the nerve functor $N$, we will omit writing $N$ from the notation.

We also consider $(2, 1)$-categories.
In this setting, when we want to be clear we write $\circ_1$ for the composition of $1$-cells, and $\circ_2$ for the composition of $2$-cells. Note that $\circ_1$ is a bifunctor, so that that $(\psi \circ_1 \xi)$ makes sense as a $2$-cell when $\psi$ and $\xi$ are $2$-cells. Moreover, $\phi \circ_2 (\psi \circ_1 \xi)$ is a $2$-cell from a $1$-cell $f \circ_1 g$ to another $h$ when we have $2$-cells $\phi \colon f' \circ_1 g' \to h$, $\psi \colon f \to f'$, and $\xi \colon g \to g'$.

Given such a $(2,1)$-category $\cC$, the \emph{Duskin nerve} construction $\ND(\cC)$~\citelist{\cite{MR1897816}\cite{lurie-kerodon}*{Section 2.3}} gives the corresponding quasicategory.
Concretely, the $n$-th set $\ND_n(\cC)$ is given by the set of lax monoidal functors $F$ from  $[n]$ (ordered set regarded as a category) to $\cC$ such that $F(\id_k) = \id_{F(k)}$ (these are also called strictly unitary lax monoidal functors).

\subsection{Spans}%\label{sec:span}

A \emph{span} of finite sets is given by a diagram $X \leftarrow Z \rightarrow Y$ in $\FinSets$, the category of finite sets.
We get a $(2, 1)$-category $\Span(\FinSets)$, whose $0$-cells are finite sets, $1$-cells are spans, and $2$-cells are isomorphisms of spans, with the composition of $1$-cells given by pullbacks.

An $(\infty, 1)$-category is called \emph{semiadditive} if there is a zero object $0$, both coproduct $X \vee Y$ and product $X \times Y$ exist, and the natural map $X \vee Y \to X \times Y$ is a natural isomorphism of these bifunctors.

The $(2, 1)$-category $\Span(\FinSets)$ is semiadditive, since the empty set $\emptyset$ is a zero object and disjoint union $X \coprod Y$ is a model for both coproduct and product.
Moreover, it is a universal singly generated semiadditive $(\infty, 1)$-category in the sense that, if $\cC$ is a semiadditive $(\infty, 1)$-category, then the map
\begin{equation}\label{eq:semiadditive-cat-functor-eval}
\Fun^\times(\Span(\FinSets), \cC) \to \cC, \quad F \mapsto F(*),
\end{equation}
where $\Fun^\times$ denotes the subspace of product-preserving maps of quasicategories, is a weak equivalence of simplicial sets, see for example~\cite{harp:amb}*{Theorem 4.1}.

\subsection{Localization}%\label{subsec:inftyloc}

Let $\cC$ be a quasicategory with a collection of morphisms $\cW \subset \cC_1$.
The localization of $\cC$ at $\cW$~\citelist{\cite{lur:ha}*{Definition 1.3.4.1}\cite{land:inftybook}*{Chapter 2}} is given by a map of quasicategories $F\colon \cC\to \cC[\cW^{-1}]$ with the following universal property: for every quasicategory $\cD$, the map $\Fun(\cC[\cW^{-1}],\cD) \to \Fun(\cC,\cD)$ induced by $F$ is a fully faithful embedding whose essential image is the collection of maps $G \colon \cC\to \cD$ such that for each $f \in \cW$, the morphism $G(f)$ admits an inverse up to homotopy.
This means that there are morphisms $f'$ and $f''$ in $\cD$ such that $(f', G(f))$ and $(G(f), f'')$ are inner horns which can be filled to a $2$-cell with identity morphisms on the other edges.

If $\cC$ is (the nerve of) an ordinary category, we call this procedure ``$\infty$-localization'' to distinguish it from the localization as an ordinary category, which corresponds to $\pi_0(N(\cC)[\cW^{-1}])$.

\subsection{Homotopy colimits and higher colimits}%\label{subsec:hclhl}

Next let us review $\infty$-categorical colimits and the relation to homotopy colimits in model categories.

Given two simplicial sets $X$ and $Y$, their join $X * Y$ is given by
\[
(X * Y)_n = X_n \amalg Y_n \amalg \left( \coprod_{i + j = n - 1} X_i \times Y_j \right) = \coprod_{i = -1}^n X([i]) \times Y(\{i + 1, \dots, n \}),
\]
with the convention that $X(\emptyset) = *$.
Given a map of quasicategories $F \colon \cD \to \cC$, the \emph{slice category} $\cC_{F/}$ is the quasicategory defined by
\[
(\cC_{F/})_n = \Mor_F(\cD * \Delta[n], \cC),
\]
where the subscript $F$ means the maps restricting to $F$ on $\cD$.

\begin{definition}
Let $F\colon \cD\to \cC$ a morphism of quasicategories.
We say that a vertex $G \in \cC$ is a \emph{colimit} of $F$ if it satisfies the following property: there exists a \emph{cone} $\bar{F} \colon \cD\star \Delta[0]\to \cC$ with $\bar{F}(\infty)=G$ such that  $\bar{F}$ is an initial object of the slice category $\cC_{F/}$.
\end{definition}

Here, $\infty$ denotes the unique $0$-cell of $\Delta[0]$, and $\bar{F}$ being initial is defined by asking the Kan complexes $\Mor(\bar{F}, F')$ for $F' \in \cC_{F/}$ to be contractible.
For other characterizations, see~\cite{land:inftybook}*{Section 4.3}.
To emphasize the $\infty$-categorical nature of this definition (particularly when $F$ comes from a functor between usual categories), we sometimes refer to the above concept as \emph{$\infty$-colimit}.

To compare homotopy colimits and $\infty$-colimits, it is technically convenient to consider \emph{combinatorial} model categories in the sense of Smith, see~\cite{dug:thm}.
This condition is satisfied by $\sSet$, and categories of spectra we consider that are modeled by simplicial sets. While the category of topological spaces is not combinatorial, it is still closely related to $\sSet$ by the singular complex functor and the thin geometric realization functor (these form a Quillen equivalence pair). Under this condition, the functor categories $\Fun(\cD, \cC)$ from categories $\cD$ admit model category structures again. We are interested in the \emph{projective model structure}, where weak equivalences and fibrations are objectwise such transformations, and cofibrations are characterized by the lifting property (see~\cite{hirsch:model}*{Section 6.11}).

\begin{definition}
Let $F \colon \cD \to \cC$ be a functor from a category $\cD$ to a (combinatorial) model category $\cC$.
The \emph{homotopy colimit} $\hocolim_\cD F$ of $F$ is given as $\colim_\cD F^c$, where $F^c \to F$ is a cofibrant replacement of $F$ in $\Fun(\cD, \cC)$ for the projective model structure.
\end{definition}

For the model category $\cC$, the $\infty$-colimit in the $\infty$-localization $N(\cC)[\cW^{-1}]$ agrees with the homotopy colimit in $\cC$~\cite{lur:ha}*{Proposition 1.3.4.24}.

\subsection{Spectra}

Let us consider the suspension functor $\Sigma X = S^1 \wedge X$ on $\sSet_*$, where $S^1$ denotes the minimal simplicial circle $\Delta[1] / \Delta[0]$.
A \emph{sequential prespectrum of simplicial sets} is an $\dN$-graded pointed simplicial set equipped with morphisms $\alpha_n\colon \Sigma X_n\to X_{n+1}$.
A morphism $f\colon X_\bullet \to Y_\bullet$ between such objects is a sequence of morphisms $f_n\colon X_n\to Y_n$ of pointed simplicial sets, such that the evident diagram involving $\alpha^X_n$ and $\alpha^Y_n$ is commutative.
This is a simplicially enriched category in the following way: for any simplicial set $K$ and prespectrum $X_\bullet$, define a new prespectrum by assigning $n$ to $X_n\wedge K_+$ with structure maps given by $\alpha_n\wedge \id_K$.
Then the $\Hom$-object $[X,Y]\in \sSet$ is such that $[X,Y]_n$ is the set of prespectra morphisms $X\wedge \Delta[n]_+ \to Y$.
We denote this category by $\Spec$.

Denoting the thin geometric realization of $X \in \sSet$ by $\gr{X}$, the \emph{stable homotopy groups} of a prespectrum $X_\bullet$ are
\begin{equation*}
  \pi_n(X) = \varinjlim_{k\to\infty} \pi_{n+k}(\gr{X_k}) \quad (n \in \dZ).
\end{equation*}
Now, the \emph{Bousfield--Friedlander model structure}~\cite{bf:spectra} on $\Spec$ is defined by taking as weak equivalences the \emph{stable weak equivalences}: those maps $f\colon X_\bullet \to Y_\bullet$ whose image under the stable homotopy groups functor $\pi_*(f)\colon \pi_*(X)\to\pi_*(Y)$ is an isomorphism; the cofibrations are the \emph{stable cofibrations}: those maps $f\colon X\to Y$ such that $f_0$ and all pushout products between $f_n$ and $\alpha_n^Y$
\begin{equation*}
  X_{n+1}\bigsqcup _{\Sigma X_n} \Sigma Y_n \to Y_{n+1}
\end{equation*}
are monomorphisms (cofibrations of simplicial sets).

The fibrant objects in this model structure are precisely the \emph{$\Omega$-spectrum} objects consisting of (pointed) Kan complexes, i.e., those prespectra with values in Kan complexes for which $\gr{\alpha_n}$ is adjunct to a map $\beta_n \colon \gr{X_n}\to \Omega \gr{X_{n+1}}$ which is a weak homotopy equivalence.
A cofibrant object in $\Spec$ is precisely a prespectrum whose structure maps are levelwise injective maps.

We also consider sequential prespectra of topological spaces.
They are given by a sequence of (pointed) topological spaces $(X_n)_{n = 0}^\infty$, endowed with continuous maps
\[
\alpha_n \colon S^1 \wedge X_n \to X_{n + 1}.
\]
The standard model structure of topological spaces can be used to define a model category $\TopCGSpec$ of prespectra of \emph{compactly generated} topological spaces.
When we have a general prespectrum of topological spaces $(X_n)_n$, we apply the $k$-ification functor levelwise to obtain an object in $\TopCGSpec$.
This does not affect the computation of stable homotopy groups, and we tacitly use this identification throughout the paper; for this reason in the sequel we drop the subscript and write $\TopSpec$ instead for simplicity.

There is a Quillen equivalence $\Spec \rightleftarrows \TopSpec$ with right adjoint as follows~\cite{bf:spectra}*{Section 2.5}. For a space $X$, we define its \emph{singular simplicial complex} as the simplicial set 
\[
\Sing X\colon [k]\mapsto \Hom_{\Top}(\Delta^k,X).
\]
This is always a prespectrum of Kan complexes, hence we obtain an $\Omega$-spectrum of simplicial sets if $X_\bullet$ is an $\Omega$-spectrum ($\beta_n \colon X_n \to \Omega X_{n + 1}$ is a weak homotopy equivalence). Levelwise geometric realization is left Quillen adjoint to $\Sing$.

Given two spectra $X$ and $Y$ in either of the above frameworks, define the \emph{smash product} $X \wedge Y$ by
\begin{align*}
(X \wedge Y)_{2n} &= X_n \wedge Y_n,&
(X \wedge Y)_{2n + 1} &= S^1 \wedge X_n \wedge Y_n,
\end{align*}
and the appropriate structure maps, see for example~\cite{adams:shgh}*{Section~II.4}.
In the case of $\TopSpec$, when $Y$ is an $\Omega$-spectrum of CW-complexes, this construction preserves weak equivalence for $X$, see for example~\cite{MR1806878}*{Proposition~11.7}.
This will be the case for all of our examples (up to the above Quillen equivalence in case of $\Spec$). It is well-known that this smash product lacks some desirable features (i.e., strict associativity), and this problem is fixed in modern approaches to spectra, nevertheless the naive definition outlined above will be sufficient for our purposes.

\subsection{Eilenberg--MacLane spectra and rationalization}

Let $A$ be a commutative group.
Then we can view $A$ canonically as a \emph{$\Gamma$-set}, namely a functor from (pointed) finite sets (and pointed maps) to the category of sets.
The $\Gamma$-set $A$ associates $A(F) = A^F$ (the set of pointed maps $F\to A$) to any finite set $F$, and a map $F\to F^\prime$ gives rise to a map $A(F)\to A(F^\prime)$ defined by summation along the fibers of non-basepoint elements.

Following~\cite{bf:spectra}, we extend $A$ to all pointed sets by taking the colimit over finite pointed sets.
By composition, $A$ sends a simplicial set to a simplicial set.
Then, by considering the simplicial spheres $(S^n)_{n\in\dN}$, we obtain a sequence of pointed simplicial sets $(AS^n)_n$, which can be checked to yield a sequential spectrum (see~\cite{bf:spectra}*{Section 4} for more details). 

\begin{definition}
The \emph{Eilenberg--MacLane spectrum} $\bH A$ is the sequential spectrum $(AS^n)_n$.
\end{definition} 

The spectrum $\bH A$ is fibrant and cofibrant in the model category $\Spec$, and in particular, it is an $\Omega$-spectrum~\cite{bf:spectra}*{Theorem 4.2}.
The above construction is natural in $A$, and we consider it a functor
\[
\bH \colon \Ab \to \Spec.
\]
In the sequel we will not distinguish between $\bH A$ and its geometric realization.

Given a spectrum $\bE$, the \emph{rationalization} (or $\dQ$-localization) of $\bE$ is defined as the smash product $\bE_\dQ = \bE\wedge \bH\dQ$ (it is customary to use the Moore spectrum $\bM\dQ$, but we have the equivalence $\bH\dQ \cong \bM\dQ$).
We have a natural isomorphism
\begin{equation}\label{eq:homotop-grp-rationalization}
\pi_n(\bE_\dQ) \cong \pi_n(\bE) \otimes \dQ,
\end{equation}
and the canonical map $\bE \to \bE_\dQ$, after application of $\pi_*$, reads as $x\mapsto x\otimes 1$ through the identification~\eqref{eq:homotop-grp-rationalization}, see~\cite{adams:shgh}*{Proposition II.6.7}. This process makes a generalized homology theory into an ordinary homology theory with coefficients in the following sense.

\begin{proposition}[\cite{adams:shgh}*{Lemma II.6.1}]
Let $\bE$ be an object of $\TopSpec$. Then $\bE\wedge\bH \dZ$ is equivalent to $\bigvee_{n\in\dZ}\Sigma^n\bH G_n$, where $G_n = \rH_n(\bE) = \pi_n(\bE \wedge \bH\dZ)$.
\end{proposition}

As a corollary, there is an equivalence $\bE_\dQ \to \bigvee_{n\in\dN}\Sigma^n \bH(\pi_n(\bE)\otimes \dQ)$.

\begin{definition}[Chern--Dold character]\label{def:chdold}
The equivalence of spectra $\ch_\bE\colon \bE_\dQ \to \bigvee_{n\in\dN}\Sigma^n \bH(\pi_n(\bE)\otimes \dQ)$ is called the \emph{Chern--Dold character}.
\end{definition}

\subsection{Stable \texorpdfstring{$\infty$}{infinity}-categories}\label{subsec:spec} 

We assume $\infty$-categories admit finite limits and colimits. A \emph{square} in $\cC$ is a cone over the morphism $s\colon \Lambda[2]\to \cC$ from a $2$-horn. An initial square is the colimit defining a pushout. There are two outer horns to choose from: in this choice the horn is ``over'' the pushout object. The other choice defines pullbacks as terminal cones which are over the horn.

Let us assume that $\cC$ is \emph{pointed}, that is, there is a zero object, or equivalently a map from the terminal to the initial object.
We say that $\cC$ is \emph{stable} if every square in $\cC$ is a pushout if and only if it is a pullback.
The canonical map from a finite coproduct to a finite product is an equivalence for any stable $\infty$-category, see~\cite{lur:ha}*{Remark 1.1.3.5}.

We consider stable $\infty$-categories as enriched over spectra of simplicial sets~\cite{lur:ha}*{Remark 7.1.2.2}.
The spectrum-enriched map spaces are denoted by $\map_{\cC}(X, Y)$ for $X, Y \in \cC$.

Let $\cW_{\mathrm{stable}}$ denote the class of stable weak equivalences in the simplicial model category $\Spec$. Then the localization $L^H(\Spec,\cW_{\mathrm{stable}})$ is an important example of stable $\infty$-category. We will often write $\Spec$ instead of $L^H(\Spec,\cW_{\mathrm{stable}})$, understanding that we are considering $\Spec$ as a stable $\infty$-category. Similarly, we will view $\TopSpec$ as a stable $\infty$-category and omit localization from the notation. The Quillen equivalence $\Spec \rightleftarrows \TopSpec$ implies an equivalence of the corresponding stable $\infty$-categories~\cite{magee:quillen}.

For $\rK$-theory of C$^*$-algebras, the most fundamental $\infty$-category is the Dwyer--Kan localization $\icKK$ of the category of separable C$^*$-algebras at the class of $\KK$-equivalence homomorphisms~\cite{landnik:comp}, which refines the triangulated structure of $\KK$-theory~\cite{nestmeyer:loc}.
For us, the ``domain'' of our construction is the stable $\infty$-category $\icKK^G$ we construct in Appendix~\ref{sec:kkg-stable}, whose homotopy category is the $G$-equivariant $\KK$-category of separable $G$-C$^*$-algebras.

\subsection{\texorpdfstring{$\rK$}{K}-theory spectra}\label{subsec:kthspec}

The classical description of the topological (complex) $\rK$-theory spectrum, denoted $\KU$, is found for example in~\cite{may:concise}*{p.~208}. We have $\KU_{2n} \cong BU\times \dZ$ and $\KU_{2n+1} \cong U$ for all $n\geq 0$.
The structure maps are given by the Bott equivalence $BU\times \dZ\cong \Omega U$ and the canonical equivalence $U\cong \Omega BU$. 
Setting $\bE=\KU$ in Definition~\ref{def:chdold}, we obtain the classical Chern character equivalence
\begin{equation}\label{eq:chern-equiv}
\KU \wedge \bH\dQ \cong \bH\dQ[u,u^{-1}],
\end{equation}
where we understand $u$ as a formal variable of degree $2$ for notational convenience.

\medskip

Let us recall the construction of the topological spectrum $\bK_n(A)$ for a C$^*$-algebra $A$ from~\cite{dekm:spec}.

Let $\hat{S}$ be C$^*$-algebra $C_0(\dR)$ with the $\dZ/2\dZ$-grading by reflection at the origin.
Denote by $\hat{\cK}$ the algebra of compact operators on the graded Hilbert space $\ell^2(\dN) \oplus \ell^2(\dN)$ with the evident grading, and write $\Cl_n$ for the  complex Clifford algebra with $n$ generators as a $\dZ/2\dZ$-graded C$^*$-algebra.
We then consider the space of (non-unital) $*$-homomorphisms
\begin{equation*}%\label{eq:dekm}
\bK_n(A) = \Hom(\hat{S}, A \hatotimes \Cl_n \hatotimes \hat{\cK}),
\end{equation*}
equipped with the compact-open topology and basepoint given by the zero map.
These spaces form a spectrum with suitable structure maps described in~\cite{dekm:spec}*{Section 3.3}, satisfying
\begin{equation*}%\label{eq:kspec}
  \pi_{k+n}(\bK_n(A))\cong \KK_{k+n}(\dC,A\hatotimes \Cl_n \hatotimes \hat{\cK}) \cong 
\rK_{k}(A),
\end{equation*}
for $n\geq 1$ and $k\geq -n$.
Furthermore,~\cite{dekm:spec}*{p.~12} explains that $\bK(A)$ is a positive $\Omega$-spectrum.

\medskip

Let us also give a picture of the $\rK$-theory spectrum based on spaces of Fredholm operators, which may be more familiar to the reader with background in operator algebras. We denote the standard ($\dZ/2\dZ$-graded) Hilbert $A \otimes \Cl_n$-module by $H_n A$.
Then we have the space $\bF_n(A)$ of odd Fredholm endomorphisms $F$ on $H_n A$ satisfying $\norm{F} \le 1$, $F^* = F$, $F^2 - \id \in \cK(H_n A)$, and the anticommutation rule for the Clifford generators.
These spaces form a sequential topological spectrum as in~\cite{bjs:spec}.
Functional calculus defines a weak equivalence $\bF_n(A) \to \bK_n(A)$ (cf.~\cite{dekm:spec}*{Remark 3.2}).

When $X$ is a compact Hausdorff space, ealuation at points of $X$ gives a homeomorphism
\[
\bF_n(C(X)) \to \Map(X, \bF_n(\dC)).
\]
Let $\bF^Q_n$ be the set of unitary elements in the $n$-graded Calkin algebra $Q_n = \cB(H_n \dC) / \cK(H_n \dC)$.
The natural quotient map $\bF_n(\dC) \to \bF^Q_n$ is a homotopy equivalence.
Moreover, Karoubi constructs a homotopy equivalence map $\KU_n \to \bF^Q_n$~\cite{MR261592}*{Corollaire (4.3)}. Combining these, we obtain a zigzag of homotopy equivalences from $\Map(X, \KU_n)$ to $\Map(X, \bF_n(\dC))$.

\begin{corollary}%\label{cor:zz-ku-dekm}
When $X$ is a compact Hausdorff space, the spectra $\Map(X, \KU_\bullet)$ and $\bK_\bullet(C(X))$ are equivalent as objects of the $\infty$-category $\TopSpec$.
\end{corollary}

We also have a notion of $\rK$-theory spectrum which is purely abstract and is derived from the structure of $\icKK$ as a stable $\infty$-category. The following proposition can be interpreted as stating that $\bK$ provides a ``concrete model'' for the abstract definition. Recall $\gr{-}$ denotes geometric realization. Since stable $\infty$-categories are enriched in spectra, we have a functor
\[
\gr{\map_{\icKK}(\dC,-)}\colon \icKK\to \TopSpec.
\]
On the other hand, since $\bK$ sends $\KK$-equivalence $\ast$-homomorphisms to weak equivalences of spectra, thanks to the universal property of the localization it descends to a well-defined functor
\[
\bK \colon \icKK\to \TopSpec.
\]
The following is from~\cite{landnik:comp}*{Proposition 3.7}.

\begin{proposition}%\label{prop:eqkickk}
    There is a natural equivalence of functors $\bK \cong  \gr{\map_{\icKK}(\dC,-)}$. 
\end{proposition}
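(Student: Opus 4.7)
The plan is to exhibit a natural equivalence of spectrum-valued functors by unpacking the explicit model of $\icKK$ constructed in Appendix~\ref{sec:kkg-stable} and matching it level-by-level with the DEKM construction. Both functors take values in positive $\Omega$-spectra, and each is a $\mathit{KK}$-theoretic model for the topological $K$-theory of $A$: that is, $\pi_n(\dK(A)) \cong K_n(A) \cong \KK_n(\dC,A) \cong \pi_n \map_{\icKK}(\dC,A)$. So the task is to promote this agreement from homotopy groups to a natural equivalence in $\TopSpec$.

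The first step is to recall the specific model of $\icKK$ provided by Appendix~\ref{sec:kkg-stable}. Following the blueprint of~\cites{landnik:comp,bel:kkstable}, $\icKK$ is obtained as a simplicial localization of a Kan-enriched category of separable C$^*$-algebras whose morphism spaces are built from spaces of $*$-homomorphisms into stabilized targets $- \hatotimes \hat{\cK}$. In particular, the $n$-th simplicial set of the mapping spectrum $\map_{\icKK}(\dC, A)$ is naturally modeled by $\Sing \Hom(\hat{S}, A \hatotimes \Cl_n \hatotimes \hat{\cK})$, whose geometric realization $\gr{-}$ is weakly equivalent (via the counit of the Quillen equivalence $\Spec \rightleftarrows \TopSpec$) to the $n$-th space $\dK_n(A)$ of the DEKM spectrum. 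This level-by-level identification is essentially a tautology given how $\icKK$ is constructed.

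The second step is to check that the spectrum structure maps agree under this identification. On the $\dK$ side, the structure maps $S^1 \wedge \dK_n(A) \to \dK_{n+1}(A)$ described in~\cite{dekm:spec}*{Section 3.3} are induced by Clifford stabilization together with the canonical Dirac/Bott fundamental class. On the $\icKK$ side, the structure maps arise from identifying the $\infty$-categorical suspension in $\icKK$ with tensoring by $\Cl_1$, which is the Bott periodicity equivalence in $\mathit{KK}$-theory. Both are implemented using the same fundamental class, so the identification preserves the structure maps. Naturality in $A$ is immediate on both sides, because a $*$-homomorphism $f\colon A\to B$ acts by post-composition on $\Hom(\hat{S}, - \hatotimes \Cl_n \hatotimes \hat{\cK})$ and by the corresponding functorial map on $\map_{\icKK}(\dC, -)$.

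The main obstacle is making precise the matching of Bott-periodic structure maps across the two models: one must verify that the Clifford-suspension equivalence implemented inside $\icKK$ (part of its stable structure) agrees with the Clifford stabilization used to build the $\dK$-spectrum. This is the part of the argument that genuinely relies on the construction in Appendix~\ref{sec:kkg-stable}, where the stable enhancement of $\KK$ is set up in such a way that Bott periodicity becomes an $\Omega$-spectrum structure of exactly the DEKM shape. Once that compatibility is laid out, the natural equivalence is essentially mechanical.
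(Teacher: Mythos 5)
The paper states this proposition without an explicit proof, so there is nothing to compare line by line; but your proposal has a genuine gap in its first step. You claim that, "given how $\icKK$ is constructed," the $n$-th space of $\map_{\icKK}(\dC,A)$ is essentially tautologically modeled by $\Sing \Hom(\hat{S}, A \hatotimes \Cl_n \hatotimes \hat{\cK})$. That is not the model used here: Appendix~\ref{sec:kkg-stable} constructs $\icKK$ as the Dwyer--Kan (hammock) localization of the ordinary category $\CstAlg$ of separable C$^*$-algebras at the class of $\KK$-equivalences, so its mapping spaces are $\Ex_\infty$ of nerves of categories of zigzags of $*$-homomorphisms, not spaces of graded $*$-homomorphisms out of $\hat{S}$. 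Nothing in that construction identifies these spaces levelwise with the DEKM spaces, and your second step inherits the problem: there are no "levels" on the localization side to match against Clifford stabilization until a comparison map of spectra has already been produced by other means.

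The comparison genuinely requires an argument of a different shape, and this is where the content of the proposition lies. The standard route (implicit in the paper's reliance on~\cite{landnik:comp}*{Lemma 3.6} for the lift of $\Map_{\icKK}(\dC,-)$ to spectra) is: both $\dK$ and $\gr{\map_{\icKK}(\dC,-)}$, viewed as functors on $\CstAlg$, are homotopy invariant, stable, split-exact and compatible with countable colimits, hence both descend to $\icKK$; a natural transformation between them is then produced by an $\infty$-categorical Yoneda/corepresentability argument from a single class in $\pi_0 \dK(\dC) \cong K_0(\dC)$ (the unit), and it is an equivalence because on homotopy groups it realizes the isomorphisms $\pi_*\dK(A) \cong K_*(A) \cong \KK_*(\dC,A)$ compatibly with Bott periodicity. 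Your observation that the two functors have isomorphic homotopy groups is a necessary ingredient but not sufficient: an abstract isomorphism of homotopy groups does not produce a map of spectra, let alone a natural one. The construction of the natural map is the real work, and it is carried by the universal property of the localization rather than by a levelwise inspection.
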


\subsection{Compactly supported cohomology theories}\label{subsec:cscsec}

Let $X$ be a locally compact Hausdorff space.
Then we have its one-point compactification $X^{+_c} = X \cup \{ \infty \}$, which is a pointed compact Hausdorff space.
Given topological spaces $X$ and $Y$, we will write $\Map(X,Y)$ for the space of continuous maps from $X$ to $Y$. 

Let $\rE^\bullet$ be a generalized cohomology theory given by a sequential (topological) spectrum $(\bE_n)_n$.
That is, we have functors
\[
\rE^n(X)= \varinjlim_k \pi_0\Map(\Sigma^k X, \bE_{n+k})\cong \pi_n \Map(X, \bE_\bullet),
\]
the latter group being the $n$-th (stable) homotopy group of the map spectrum $\Map(X, \bE_\bullet)$.
We define the \emph{compactly supported $\rE$-cohomology} $\rE_c^n(X)$ by
\begin{equation}\label{eq:csc}
\rE_c^n(X) = \varinjlim_k \,[\Sigma^k X^{+_c}, \bE_{n + k}]_*,
\end{equation}
where $[\Sigma^k X^{+_c}, \bE_{n + k}]_*$ denotes the set of homotopy classes of pointed continuous maps from the $k$-fold suspension of $X^{+_c}$ to $\bE_{n + k}$.
When $X$ is already compact, we have $\rE_c^n(X)\cong \rE^n(X)$.

Compactly supported cohomology is contravariant for proper continuous maps: such a map $X \to Y$ induces a well-defined map $X^{+_c} \to Y^{+_c}$. Moreover, we have covariance for embedding of open sets $U \to X$, since we get a map $X^{+_c} \to U^{+_c}$ through the homeomorphism $U^{+_c} \cong X^{+_c}/(U^c \cup \{ \infty \})$.

\begin{example}
Consider the topological $\rK$-theory spectrum $\KU = (\KU_n)_n$. Then the corresponding compactly supported cohomology $\KU_c^n(X)$ agrees with the topological $\rK$-group $\rK^n(X)$, which is by definition the reduced $\rK$-group
\[
\tilde{\rK}^n(X^{+_c}) = \ker ( \rK^n(X^{+_c}) \to \rK^n(\{ \infty \}) )
\]
of the compactification.
In general, when $X$ is a compact space without fixed basepoint, we make sense of $\tilde{\rK}^n(X)$ as the cokernel of the canonical map $\rK^n(*) \to \rK^n(X)$ (the one induced by $X \to *$).
\end{example}

\begin{remark}%\label{rem:cscohmodel}
Suppose that $X$ is a totally disconnected locally compact Hausdorff space, and that the spectrum $E$ is given by locally contractible spaces $\bE_n$ as a sequential spectrum. Then there is an another model for $\rE_c^n(X)$, given as the inductive limit $\varinjlim_K \rE^n(K)$, where $K$ runs over the directed system of compact open sets in $X$ and inclusion maps between them.

To see this, suppose that we have a class $\alpha$ of $\rE_c^n(X)$ represented by a map $f \colon \Sigma^k X^{+_c} \to \bE_{n + k}$.
Let $\tilde{f}\colon X \to \bE_{n + k}^{S^k}$ be the continuous map corresponding to $S^k \times X^{+_c} \to \bE_{n + k}$ induced by $f$, under the exponential law for spaces.
Take a contractible neighborhood $U \subset \bE_{n + k}$ of the basepoint, and let $\tilde{U} \subset \bE_{n + k}^{S^k}$ be the subset consisting of the maps whose range is contained in $U$.
Then $\tilde{U}$ is an open neighborhood of $\tilde{f}(\infty)$, and the inverse image $\tilde{f}^{-1}(\tilde{U}) \subset X^{+_c}$ contains a compact open neighborhood $V$ of $\infty$.
By construction $K = X^{+_c} \setminus V$ is also a compact open set, and the restriction of $f$ to $K$ defines a class $\alpha' \in \rE^n(K)$, which maps to $\alpha$. This shows that the natural map $\varinjlim_K \rE^n(K) \to \rE_c^n(X)$ is surjective, and a similar argument for homotopies shows that it is injective.
\end{remark}

Let $X$ be a compact Hausdorff space. Let us write $\underline{A}$ for the abelian group $A$ viewed as a constant sheaf on $X$. We have the classical isomorphism induced by the Chern character:
\begin{equation}\label{eq:ch}
\tilde{\rK}^n(X) \otimes \dQ \cong \bigoplus_{k \in \dZ} \tilde{\rH}^{n + 2 k}(X; \underline{\dZ}) \otimes \dQ \cong \bigoplus_{k \in \dZ} \tilde{\rH}^{n + 2 k}(X; \underline{\dQ}),
\end{equation}
where $\tilde{\rH}^\bullet$ is the reduced version of Čech cohomology, defined as the cokernel for $\rH^\bullet(*) \to \rH^\bullet(X)$, see for example~\cite{karoubi-cartan-seminar-16}.
Note that, when $X$ is compact (more generally, when it is paracompact), the Čech cohomology group $\rH^n(X; F)$ with coefficient in a sheaf $F$ agrees with the value of the $n$-th derived functor of the global section functor on $F$, see~\cite{god:fais}*{Section II.5.10}.
            
From~\eqref{eq:chern-equiv}, the Chern character induces an isomorphism
\begin{equation}\label{eq:chdoldconc}
\ch_{\KU,\bullet}\colon\KU_\dQ^\bullet(X)\to \bigoplus_{k \in \dZ} \bH\dQ^{\bullet + 2 k}(X).
\end{equation} 
The main result in~\cite{huber:cechcoh} states that $\bH A$ represents Čech cohomology with coefficients in $\underline{A}$: for a compact Hausdorff space $X$ we have an isomorphism of abelian groups $\rH^n(X,\underline{A})\cong \varinjlim_k [\Sigma^k X, \bH A_{n+k}]$. Thus~\eqref{eq:chdoldconc} is simply the unreduced version of the isomorphism in~\eqref{eq:ch}. We see from~\eqref{eq:csc} that the Chern--Dold character is an isomorphism for the compactly supported cohomology groups as well. It is proved in~\cite{dold:ch} that $\ch_{\KU,\bullet}$ coincides with the classical Chern character.

\subsection{Topological groupoids and C\texorpdfstring{$^*$}{*}-algebras}\label{sec:grpds-c-star-algs}

For an introduction to topological groupoids and their associated C$^*$-algebras we refer to~\cites{ren:group,dw:toolkit}. We do not recall these notions in detail here and generally follow the treatment in our previous work~\cite{py:part-one}. Unless otherwise specified, $G$ will be a second countable, locally compact, Hausdorff groupoid with unit space $G^{(0)}=X$. In general, $G^{(n)}$ will denote the space of $n$-tuples from $G$ whose elements $(\gamma_1,\dots,\gamma_n)$ are composable, i.e., $r(\gamma_{i+1})=s(\gamma_{i})$.

We are going to assume that $G$ is \emph{étale}. This means the source and range maps of $G$, denoted $s$ and $r$, are local homeomorphisms and each point in $G$ admits a local base of \emph{bisections}.
These are open sets $U\subset G$ such that $r|_U$ and $s|_U$ are homeomorphism onto an open set of $X$.

\begin{definition}
An étale groupoid $G$ is said to be \emph{ample} if it admits a topological basis consisting of compact open bisections.
Equivalently, $G$ is ample if its unit space is totally disconnected.
\end{definition}

For any point $x\in X$, we denote by $G_x$ (respectively, $G^x$) the subspace of arrows in $G$ whose source (range) is $x$. For each $x\in X$, define the isotropy (or stabilizer) group $G^x_x = G_x\cap G^x$.

\begin{definition}%\label{def:topfree}
We say $G$ is \emph{torsion-free} if the stabilizer group $G^x_x$ is torsion-free for all $x\in X$.
\end{definition}

There are several equivalent notions of \emph{groupoid equivalence} or \emph{Morita equivalence}, see~\cite{fkps:hk}*{Proposition 3.10} for an overview (the notion most familiar to operator algebraists is in~\cite{murewi:morita}); it implies that C$^*$-algebras of equivalent groupoids are strongly Morita equivalent.
A useful example of a groupoid which is equivalent to $G$ is given by the \emph{ampliation} (or \emph{blow-up}) of $G$ associated to an étale surjection $\varphi\colon Y\to X$, as follows:
\begin{equation}\label{eq:ampliation}
G^\varphi= \{ (y_1,\gamma,y_2) \in Y\times G\times Y \mid \varphi(y_1)=r(\gamma), \varphi(y_2)=s(\gamma)\}
\end{equation}
with evident composition and inversion maps. The induced projection $\hat\varphi\colon G^\varphi\to G$ is a prototypical example of Morita equivalence in the sense of~\cite{cramo:hom}*{Section 4.5}.

Since $G$ is étale, the fibers of $r$ and $s$ are discrete and $G$ admits a Haar system given by counting measures.
This allows us to define the reduced groupoid C$^*$-algebra of $G$, which is the completion of the natural convolution algebra $C_c(G)$ of compactly supported continuous functions on $G$ with respect to the operator norm obtained as the supremum over all \emph{regular} representations of $G$ associated to point masses on its unit space $X$. 

Thus, given a (separable) $G$-C$^*$-algebra $A$, for example $C_0(X)$, we can consider the reduced groupoid \emph{crossed product} C$^*$-algebra $A\rtimes_r G$. This is denoted $C^*_r G$ when $A=C_0(X)$.

\subsection{Correspondences}\label{subsec:correspondences}

We consider the $(2,1)$-category $\PrCorr$ of proper correspondences between C$^*$-algebras.
Its  objects ($0$-cells) are given by (separable) C$^*$-algebras, $1$-morphisms ($1$-cells) from $A$ to $B$ are given by Hilbert $B$-modules $E_B$ together with a nondegenerate homomorphism $A \to \cK(E_B)$, and $2$-morphisms ($2$-cells) are given by isomorphisms of such correspondences.
This is a semiadditive $(2,1)$-category, where the direct sum is given by the usual direct sum of C$^*$-algebras.

Given an (ample) groupoid $G$ with base $X$, we will also work with $G$-C$^*$-algebras and equivariant correspondences between them. By definition, a $G$-C$^*$-algebra is a C$^*$-algebra $A$ endowed with a nondegenerate homomorphism of $C_0(X)$ to the center of the multiplier algebra of $A$, together with an action map $C_0(G)_s \otimes_{C_0(X)} A \to C_0(G)_r \otimes_{C_0(X)} A$ satisfying a natural multiplicativity condition (here, the subscripts $s$ and $r$ indicate that we use the source and range maps to define the $X$-algebra structures on $C_0(G)$).
Since we assume that $G$ is ample, we can also encode this structure in terms of the algebra $A_U = C_0(U) A$ for open sets $U \subset X$ and $*$-isomorphisms $A_{s(V)} \to A_{r(V)}$ for open bisections $V \subset G$.

Given a $G$-C$^*$-algebra $A$ and a right Hilbert $A$-module $E$, we can again make sense of the equivariant $G$-action on $E$, for example as a suitable collection of isomorphisms $E_{s(V)} \to E_{r(V)}$ for open bisections $V \subset G$, where we write $E_U = E C_0(U)$ (note that any vector $\xi \in E$ can be written as $\xi = \eta a$ for $\eta \in E$ and $a \in A$, and $\xi f = \eta (a f)$ for $f \in C_0(U)$ is independent of this factorization).
Such an action defines a natural $G$-C$^*$-algebra structure on $\cK_A(E)$.

Then, when $A$ and $B$ are $G$-C$^*$-algebras, an equivariant correspondence is given by an equivariant right Hilbert $B$-module $E$ together with an equivariant nondegenerate $*$-homomorphism $A \to \cK_B(E)$.

Let us also define (proper) groupoid correspondences. Let $G,H$ be étale groupoids. A proper correspondence $\Omega\colon G\to H$ is a space $\Omega$ equipped with a left $G$-action and a right $H$-action, respectively with anchor maps $\rho$ and $\sigma$, such that:
\begin{itemize}
\item the $G$-action commutes with the $H$-action, i.e., $\Omega$ is a \emph{$G$-$H$-bispace};
\item the induced map $\bar{\rho}\colon \Omega/H\to G^{(0)}$ is proper;
\item the right $H$-action is free, proper, and étale (i.e., $\sigma\colon \Omega\to H^{(0)}$ is étale).
\end{itemize}
Note that a Morita equivalence $\Omega\colon G \to H$, as defined in~\cite{murewi:morita}, is a groupoid correspondence where $G$ acts freely and properly, and $\rho, \sigma$ induce homeomorphisms $\bar{\rho}\colon \Omega/H\to G^{(0)}$ and $\bar{\sigma}\colon G\backslash\Omega\to H^{(0)}$.
 
\subsection{Groupoid homology}\label{sec:gr-hom}

Let us briefly review the homology of ample groupoids, with coefficients in sheaves equipped with a groupoid action~\cite{cramo:hom}. When $G$ is a topological groupoid with base space $X$, a $G$-sheaf (of commutative groups) over $X$ consists of a $G$-space $F$ whose structure map $F\to X$ is a surjective local homeomorphism with fibers abelian groups.

In fact, when $G$ is ample, such $G$-sheaves are represented by \emph{unitary $C_c(G; \dZ)$-modules}~\cite{stein:amplesheafmod}. A module $M$ over $C_c(G; \dZ)$ is said to be unitary if it has the factorization property $C_c(G; \dZ) M = M$. The correspondence is given by $\Gamma_c(U; F) = C_c(U; \dZ) M$ for compact open sets $U \subset X$ if $F$ is the sheaf corresponding to such a module $M$.
Below we write $\dZ(U)$ in place of $C_c(U; \dZ)$.

\begin{remark}%\label{rem:gammac-is-exact}
Underlying the equivalence between $G$-sheaves and unitary $\dZ G$-modules is the fact that, for any closed set $C\subseteq X$, the functor $\Gamma_c(C,-)$ preserves short exact sequences. Since $\Gamma_c(C,F|_C)\cong \rH^0_c(C,F|_C)$, this follows from the long exact sequence in compactly supported sheaf cohomology, combined with the fact that higher cohomology groups of totally disconnected spaces vanish.
\end{remark}

The definition of groupoid homology is based on \emph{$c$-soft} resolutions of sheaves. A sheaf is $c$-soft if, for any compact subspace $K \subset X$ and $s \in \Gamma(K, F)$, there is a global section $s' \in \Gamma(X, F)$ such that $s'|_K = s$. Any sheaf of commutative groups over a totally disconnected, second countable, locally compact Hausdorff space is $c$-soft~\cite{py:part-one}*{Proposition 2.8}. Thus, when working over ample groupoids, there is no need for sheaf resolutions, which makes it possible to define groupoid homology relying solely on unitary $\dZ G$-modules and homological algebra~\cite{bdgw:matui}*{Section 2}.

\begin{definition}[Ample groupoid homology]
Let $M$ be a unitary $\dZ G$-module. The groupoid homology $\rH_n(G;M)$ of $G$ with coefficients in $M$ is the $n$-th $\Tor$ group $\Tor^{\dZ G}_n(\dZ X, M)$.
\end{definition}

Hence $\rH_n(G;M)$ is characterized as the $n$-th left derived functor of the functor $M\mapsto \dZ G\otimes_{\dZ G} M$, i.e., the group of $G$-coinvariants of $M$~\cite{bdgw:matui}*{Lemma 2.2}.

Given a chain complex of $G$-sheaves $F_\bullet$, we can define the associated \emph{hyperhomology} groups $\dH_*(G;F_\bullet)$ in terms of the corresponding complex of $\dZ G$-modules $M_\bullet$ as follows: $\dH_n(G;F_\bullet)$ is the $n$-th homology group of the total complex coming from the bicomplex $(P_i\otimes_{\dZ G} M_j)_{ij}$, where $P_\bullet\to \dZ X$ is a resolution by projective $\dZ G$-modules. For example, one can take the \emph{flat} resolution induced by the nerve of $G$, given by $P_i=\dZ G^{(i+1)}$, $i\geq 0$ (see~\cite{li:algk}*{Section 2.3} and~\cite{ali:homcor}*{Section 2}).
For concreteness, if $M$ is the $\dZ G$-module associated to the constant sheaf with stalk $A$ (an abelian group), the resulting chain complex from this resolution, which computes $\rH_\ast(G; A)$, is given by $(C_c(G^{(n)},A))_{n = 0}^\infty$ with differential
\begin{align}\label{eq:ghcc}
\partial_n &=\sum_{i=0}^n(-1)^i (d^n_i)_* \colon C_c(G^{(n)}; A) \to C_c(G^{(n-1)}; A),&
(d^n_i)_*(f)(x) &= \sum_{d^n_i(y) = x} f(y),
\end{align}
where $d^n_i\colon G^{(n)}\to G^{(n-1)}$ is the $i$-th face map.

\section{Spans of totally disconnected spaces}\label{sec:span-tot-disc}

We consider the following $(2, 1)$-category $\Span_{\pret}(\TDLC)$.
The $0$-cells (objects) are second countable, locally compact, totally disconnected, Hausdorff topological spaces.
The $1$-cells from $X$ to $Y$ are given by spans $X \leftarrow Z \to Y$, where $Z$ is a second countable, locally compact, totally disconnected, Hausdorff topological space, $Z \to X$ is a proper map and $Z \to Y$ is an étale map.
The $2$-cells are isomorphisms of such spans.
This category is semiadditive, with coproduct (and product) given by disjoint union.

We denote its full subcategory of compact spaces by $\Span_{\pret}(\TDC)$ (properness does not matter in this subcategory).
Note that $\Span(\FinSets)$ is a full subcategory of $\Span_{\pret}(\TDC)$.

Given a semiadditive $(\infty, 1)$-category $\cC$, consider the maps of $(\infty, 1)$-categories from $\Span_{\pret}(\TDC)$ to $\cC$ compatible with direct products, and such that
\begin{equation}\label{eq:continuity-condition-for-finite-quots}
F(X) \cong \colim_{i \in I} F(K_i)
\end{equation}
holds for any projective system of finite sets $(K_i)_{i \in I}$ such that $X \cong \varprojlim_I K_i$ with respect to the maps $F(K_i) \to F(X)$ induced by the maps $X \to K_i$ (note that we have $\TDC^\opo \subset \Span_{\pret}(\TDC)$).
We denote the space of such maps by $\Fun^{\times,c}(\Span_{\pret}(\TDC), \cC)$.

\begin{example}\label{ex:transfer-EM-funcs}
Let $A$ be a commutative group.
Given a locally compact totally disconnected space $X$, we consider the space of compactly supported locally constant $A$-valued functions, $C_c(X; A)$.
If $X \leftarrow Z \to Y$ is a $1$-cell in $\Span_{\pret}(\TDLC)$, we have the induced map $C_c(X; A) \to C_c(Y; A)$ defined as the composition of pullback map $C_c(X; A) \to C_c(Z; A)$ and the summation over fibers $C_c(Z; A) \to C_c(Y; A)$ (the latter is well defined since $Z \to Y$ is an étale map).
Any two $1$-cells connected by a $2$-cell would induce the same map, hence we get a well-defined map of $(\infty, 1)$-categories $\Span_{\pret}(\TDLC) \to \Ab$.
By the functoriality of the Eilenberg--MacLane spectra construction $\Ab \to \Spec$, we get a map of $(\infty, 1)$-categories $\Span_{\pret}(\TDLC) \to \Spec$.
This preserves (co)products.
Moreover, when $X$ is compact, we have $C_c(X; A) = C(X; A)$, and it has the desired continuity property~\eqref{eq:continuity-condition-for-finite-quots}.
This way we get an element in $\Fun^{\times,c}(\Span_{\pret}(\TDC), \Spec)$ given by $X \mapsto \bH C(X; A)$.
\end{example}

\subsection{Classification}%\label{subsec:classification}

Suppose that $\cC$ is an $(\infty, 1)$-category and $\cC^0 \to \cC$ is the inclusion of a full $(\infty, 1)$-subcategory.
Given another $(\infty, 1)$-category $\cD$, a map $F \in \Fun(\cC, \cD)$ is called a \emph{left Kan extension} of a map $F_0 \in \Fun(\cC^0, \cD)$ if
\[
F(X) \cong \colim_{(Y \to X) \in \cC^0_{/X}} F_0(Y)
\]
holds for all $0$-cell $X$ of $\cC$, see~\cite{lur:ht}*{Section 4.3.2}.
For a fixed $F_0$, the space of its left Kan extensions to $\cC$ is, if nonempty, contractible.

\begin{lemma}\label{lem:overcat-of-2-1-cat}
Let $\cC$ be a $(2, 1)$-category, and $X$ be a $0$-cell of $\cC$.
The overcategory $\cC_{/X}$, as a simplicial set, is represented by the Duskin nerve of the $(2, 1)$-category $\cD$ with the following ingredients:
\begin{itemize}
\item the $0$-cells are the $1$-cells in $\cC$ of the form $f\colon Y \to X$;
\item a $1$-cell from $f \colon Y \to X$ to $g \colon Z \to X$ is given by a $1$-cell $h \colon Y \to Z$ in $\cC$ together with a $2$-cell $\phi \colon g h \to f$; and
\item a $2$-cell from $(h, \phi)$ to $(h', \phi')$ is given by a $2$-cell $\psi \colon h \to h'$ in $\cC$ such that $\phi' \circ_2 (\id_g \circ_1 \psi) = \phi$.
\end{itemize}
\end{lemma}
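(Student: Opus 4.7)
The strategy is to exhibit a natural isomorphism of simplicial sets between $C_{/X}$ and $\ND(D)$ by unpacking an $n$-simplex on each side and matching the data. Using the join identity $\Delta[n] \star \Delta[0] = \Delta[n+1]$, an $n$-simplex of $C_{/X}$ is the same as an $(n+1)$-simplex of $\ND(C)$ whose last vertex is $X$. By the Duskin description, this is a strictly unitary lax functor $F\colon [n+1] \to C$ with $F(n+1) = X$, whose data consists of $1$-cells $F(i \le j)$ for $i < j$ in $\{0, \ldots, n+1\}$, together with structure $2$-cells $F(i \le j \le k)\colon F(j \le k) \circ_1 F(i \le j) \Rightarrow F(i \le k)$ satisfying the usual lax coherence on every sub-$3$-simplex.

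I would then re-bundle the data. Setting $Y_i = F(i)$ and $f_i = F(i \le n+1)\colon Y_i \to X$ for $0 \le i \le n$, the $f_i$ are $0$-cells of $D$; the pair $(F(i \le j),\, F(i \le j \le n+1))$ is a $1$-cell of $D$ from $f_i$ to $f_j$; and for $0 \le i < j < k \le n$, the $2$-cell $F(i \le j \le k)\colon F(j \le k) \circ_1 F(i \le j) \Rightarrow F(i \le k)$ is a candidate $2$-cell in $D$ from the $D$-composite of the $(i,j)$- and $(j,k)$-$1$-cells to the $(i,k)$-$1$-cell. The key check is that this candidate actually satisfies the $2$-cell condition of $D$: unwinding the definition of composition in $D$, which reads $(k, \psi) \circ_1 (h, \phi) = (k \circ_1 h,\ \phi \circ_2 (\psi \circ_1 \id_h))$, the required equation is $F(i \le k \le n+1) \circ_2 (\id \circ_1 F(i \le j \le k)) = F(i \le j \le n+1) \circ_2 (F(j \le k \le n+1) \circ_1 \id)$, which is precisely the Duskin coherence for the $(i,j,k,n+1)$-sub-$3$-simplex of $F$.

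Having matched $0$-, $1$-, $2$-, and $3$-simplex data, no new information appears at higher dimensions on either side: the Duskin nerve of a $(2,1)$-category is $3$-coskeletal up to the coherence relation on $3$-simplices, so the bijection propagates to a simplicial isomorphism compatible with all face and degeneracy maps (which on both sides are induced in the evident way by morphisms in $\Delta$). The main obstacle is the bookkeeping at the $3$-simplex step, where the single lax coherence equation of $F$ in $C$ must be translated term-by-term into the $2$-cell condition in $D$ using the explicit composition formula; the rest of the argument is then a routine unpacking of definitions.
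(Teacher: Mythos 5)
Your proof is correct and follows essentially the same route as the paper's: both identify an $n$-cell of $C_{/X}$ with an $(n+1)$-cell of the Duskin nerve of $C$ whose last vertex is $X$, and match this data with an $n$-cell of $\ND(D)$. The paper's proof is terser (it simply asserts "a concrete bijective correspondence of defining data"), whereas you additionally verify that the $2$-cell condition in $D$ translates into the lax coherence for the $(i,j,k,n+1)$ sub-$3$-simplex, which is the one nontrivial check and is worth having spelled out.
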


\begin{proof}
Unpacking the definitions, an $n$-cell of the simplicial set for $\cC_{/X}$ is given by a sequence $Y_0$, \dots, $Y_{n+1}$ of $0$-cells such that $Y_{n+1} = X$, a collection of $1$-cells $f_{i, j} \colon Y_i \to Y_j$ for $i < j$, and $2$-cells $f_{i, j} \circ_1 f_{j, k} \to f_{i, k}$ satisfying the associativity constraint.
On the other hand, an $n$-cell in $\ND(\cD)$ is given by a sequence of $0$-cells of $\cD$, i.e., $Y_0 \to X$, \dots, $Y_n \to X$, and $1$-cells $Y_i \to Y_j$ for $i < j$, and again $2$-cells analogous to above.
We thus have a concrete bijective correspondence of defining data.
\end{proof}

\begin{theorem}\label{thm:additive-infty-ftrs-on-tdlc}
Let $\cC$ be a semiadditive $(\infty, 1)$-category.
The restriction map
\begin{equation}\label{eq:tdc-to-fin-res-map}
\Fun^{\times,c}(\Span_{\pret}(\TDC), \cC) \to \Fun^\times(\Span(\FinSets), \cC)
\end{equation}
has contractible fibers.
\end{theorem}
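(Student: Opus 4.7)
The strategy is to realize the fiber of~\eqref{eq:tdc-to-fin-res-map} over a given $F_0$ as the space of left Kan extensions along the fully faithful inclusion $\iota\colon \Span(\FinSets) \hookrightarrow \Span_{\pret}(\TDC)$. Since the space of left Kan extensions of a fixed functor is either empty or a contractible Kan complex, the contractibility of the fibers will follow once existence is established and the characterization verified.

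First, I would pin down the values of any candidate extension on objects. The central geometric input is the profinite presentation of compact totally disconnected spaces: every $X \in \TDC$ is canonically the cofiltered limit $X \cong \varprojlim_i K_i$ over the poset of its clopen partitions, with each $K_i$ a finite set. Under the embedding $\TDC^\opo \hookrightarrow \Span_{\pret}(\TDC)$ this limit becomes a filtered colimit, so the continuity condition~\eqref{eq:continuity-condition-for-finite-quots} forces
\[
F(X) \cong \colim_i F_0(K_i).
\]
Next, for morphisms, I would use that for any span $X \stackrel{p}{\leftarrow} Z \stackrel{e}{\to} Y$ in $\Span_{\pret}(\TDC)$ the properness of $p$ forces $Z$ to be compact, and hence the étale right leg $e$ is automatically a finite covering map. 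Choosing a clopen partition of $Y$ over which $e$ trivializes and invoking semiadditivity reduces the span to a formal sum of spans whose middle term is a clopen subset of $Y$; these in turn admit finite approximations that pin down $F$ via product-preservation together with the formula above.

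The last step is to recognize the resulting assignment as a left Kan extension. Using Lemma~\ref{lem:overcat-of-2-1-cat}, one identifies the overcategory $\iota/X$ with the $(2,1)$-category whose $0$-cells are finite-set-labeled étale covers $K \leftarrow W \to X$ of $X$. One then checks that the full subcategory spanned by the \emph{identity spans} $K \leftarrow X \stackrel{\id}{\to} X$ (equivalently, clopen partitions $X \to K$) is cofinal. The Kan extension formula $(\Lan_\iota F_0)(X) \cong \colim_{\iota/X} F_0$ then collapses to the formula of the previous step. Conversely, any $F \in \Fun^{\times,c}(\Span_{\pret}(\TDC), C)$ satisfies the Kan extension property by the same analysis, since product-preservation and continuity together supply the requisite colimit universal property.

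The principal technical obstacle lies in this cofinality argument: given an arbitrary labeled étale cover $K \leftarrow W \to X$, one must exhibit a clopen partition $X = \coprod_\alpha X_\alpha$ together with a morphism in $\iota/X$ from the associated identity span to $(K, W)$, witnessed by a coherent $2$-cell in the Duskin nerve. This reduces to the elementary fact that any finite étale cover over a compact totally disconnected base trivializes over some sufficiently fine clopen partition, but the bookkeeping of the $2$-categorical data requires care. Once the cofinality is in hand, the standard $\infty$-categorical machinery of left Kan extensions, together with the semiadditivity of both source and target, delivers the desired contractibility of the fibers.
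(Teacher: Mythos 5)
Your overall strategy coincides with the paper's: identify the fiber over $F_0$ with the space of left Kan extensions along the full inclusion $\Span(\FinSets) \hookrightarrow \Span_{\pret}(\TDC)$, note that this space is contractible when nonempty, and reduce everything to the cofinality of the subcategory of finite quotients (equivalently, of the ``identity spans'' $K \leftarrow X \xrightarrow{\id} X$ indexed by clopen partitions) inside the slice $\Span(\FinSets)_{/X}$. The object-level identification via~\eqref{eq:continuity-condition-for-finite-quots}, the observation that properness of the left leg forces the middle space of a span over a compact base to be compact (so the étale leg is a finite covering that trivializes over a clopen refinement), and the use of Lemma~\ref{lem:overcat-of-2-1-cat} to unwind the slice all match the paper.

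The gap is in the cofinality step, which you correctly flag as the crux but then reduce to the wrong criterion. You propose to verify cofinality by exhibiting, for each labeled étale cover $K \leftarrow W \to X$, a single comparison morphism to or from an associated identity span. For $\infty$-categorical cofinality this is not enough: by Quillen's Theorem A one must show that, for every $0$-cell $U$ of the slice, the comma $\infty$-category $\FinSets^\opo_{/X} \times_{\Span(\FinSets)_{/X}} (\Span(\FinSets)_{/X})_{U/}$ is \emph{weakly contractible} — mere nonemptiness (or even nonemptiness plus connectedness, which would suffice for $1$-categorical colimits) does not suffice, and in any case the morphisms you need point out of $U$ into the subcategory, not into $U$. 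The paper closes this by computing the comma category explicitly as the Duskin nerve of an ordinary category (whose objects are clopen partitions $X \to Z$ together with trivializations of the cover over each block) and proving that this category is \emph{filtered} — constructing a common refinement of two partitions with compatible trivializations, and coequalizing parallel arrows — whence its nerve is contractible. Without this filteredness argument, or some equivalent verification of weak contractibility, your proof does not go through; the trivialization fact you invoke gives only nonemptiness of the comma categories.
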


\begin{proof}
Suppose $X \leftarrow U \to Y$ is a $1$-cell in $\Span_{\pret}(\TDC)$, with $X$ and $Y$ being finite sets.
Since we assume $U \to Y$ to be an étale map, $U$ is also a finite set.
This shows that $\Span(\FinSets)$ is a full $(2,1)$-subcategory of $\Span_{\pret}(\TDC)$.
Let $F_{0}$ be the restriction of $F$ to $\Span(\FinSets)$.
We claim that $F$ is a model of left Kan extension of $F_0$ to $\Span_{\pret}(\TDC)$.

For each totally disconnected compact space $X$, the condition~\eqref{eq:continuity-condition-for-finite-quots} gives a colimit for the slice category $\FinSets^\opo_{/X}$ with respect to the full embedding $\FinSets^\opo \to \TDC^\opo$ (that is, the category $\FinSets^\opo \times_{\TDC^\opo} \TDC^\opo_{/X}$ consisting of the morphisms $Z \to X$ in $\TDC^\opo$ with $Z \in \FinSets$), while the left Kan extension is about the slice category $\Span(\FinSets)_{/X}$.
We thus need to show that the inclusion $\FinSets^\opo_{/X} \to \Span(\FinSets)_{/X}$ is cofinal.
By the $\infty$-categorical version of Quillen's Theorem A~\cite{lur:ht}*{Theorem 4.1.3.1}, it amounts to checking the weak contractibility of the comma $\infty$-category $\cC \times_\cD \cD_{U/}$ for $\cC = \FinSets^\opo_{/X}$, $\cD = \Span(\FinSets)_{/X}$, and a $0$-cell $U$ in $\cD$, that is, a span $Y \leftarrow U \to X$ for $Y \in \FinSets$.

By Lemma~\ref{lem:overcat-of-2-1-cat} and its analogue to undercategories, the simplicial set for $\cC \times_\cD \cD_{U /}$ is the Duskin nerve of the following $(2,1)$-category, which is in fact an ordinary category.
\begin{itemize}
\item the $0$-cells are given by $(f \colon X \to Z, (V_{y, z})_{(y, z) \in Y \times Z}, (\psi_y)_{y \in Y})$, where $f$ is a continuous map to a finite set $Z$ and $V_{y, z}$ are finite sets, isomorphisms of étale spaces $\psi_y\colon U_y \to \coprod_z V_{y, z} \times X_z$ over $X$ for each $y \in Y$;
\item the $1$-cells from from such a $0$-cell $(f \colon X \to Z, V_\bullet, \psi_\bullet)$ to another one $(f'\colon X \to Z', V'_\bullet, \psi'_\bullet)$ are given by the maps $g \colon Z' \to Z$ (morphisms $Z \to Z'$ in $\FinSets^\opo$) such that the partition $(X_{z'})_{z' \in Z'}$ is a refinement of $(X_z)_{z \in Z}$ subject to $g$, together with bijective maps $\phi_{y,z'} \colon V_{y,g(z')} \to V'_{y,z'}$ for $y \in Y$ and $z' \in Z'$, compatible with $\psi_y$ and $\psi'_y$; and
\item there is a $2$-cell from a $1$-cell $(g^1, \phi^1)$ to another $(g^2, \phi^2)$ (which are both from a $0$-cell $(Z, V_{\bullet}, \psi_\bullet)$ to another $(Z', V'_\bullet, \psi'_\bullet)$) only when $g^1 = g^2$ and $\phi^1 = \phi^2$, and the identity $2$-morphism is the only choice.
\end{itemize}

It remains to check that this category is filtered, since such categories indeed have contractible nerve, see~\cite{MR0338129}*{p.~85}.
First, suppose we have two $0$-cells $(Z^1, V^1, \psi^1)$ and $(Z^2, V^2, \psi^2)$, and let us construct a $0$-cell $(Z^3, V^3, \psi^3)$ that receives $1$-cells from both.
As a set, $Z^3$ is the index set for the refinement partition of $(X_{z_1})_{z_1 \in Z^1}$ and $(X_{z_2})_{z_2 \in Z^2}$ (to be precise, it is the index set for nonempty subsets $X' \subset X$ of the form $X' = X_{z_1} \cap X_{z_2}$ for some $z_1 \in Z^1$ and $z_2 \in Z^2$, counted without multiplicity).
We write the corresponding maps as $g^1 \colon Z^3 \to Z^1$ and $g^2 \colon Z^3 \to Z^2$.
Next, given $y \in Y$ and $z_3 \in Z^3$, we fix $x \in X_{z_3}$ and set $V^3_{y, z_3} = U_x$.
We then define $\phi^1_{y, z_3} \colon V^1_{y, g^1(z_3)} \to V^3_{y, z_3}$ by $\phi^1(v) = u$, where $u \in U_x = V^3_{y, z_3}$ is the element that corresponds to $(v, x) \in V^1_{y, g^1(z_3)} \times X_{g^1(z_3)}$ under the map $\psi^1_y$ (note that we have $X_{z_3} \subset X_{g^1(z_3)}$).
The maps $\psi^2_{y, z_2} \colon V^2_{y, g^2(z_3)} \to V^3_{y, z_3}$ are defined
in the same way.

Next, suppose that we have parallel $1$-cells $(g, \phi)$ and $(g', \phi')$ from a $0$-cell $(f\colon X \to Z, V_{\bullet}, \psi_\bullet)$ to another $(f'\colon X \to Z', V'_\bullet, \psi'_\bullet)$, and let us find a $1$-cell $(g'', \phi'') \colon (Z', V'_\bullet, \psi'_\bullet) \to (Z'', V''_\bullet, \psi''_\bullet)$ that coequalizes both.
For this, it is enough to take the image of $f'$ as $Z''$ (that is, the set of $z' \in Z'$ such that $X_{z'} \neq \emptyset$), and take the obvious restrictions $V''$ and $\psi''$.
\end{proof}

This result, combined with~\eqref{eq:semiadditive-cat-functor-eval}, implies that the functors in $\Fun^{\times,c}(\Span_{\pret}(\TDC), \cC)$ are classified by their values on the singleton space.
Moreover, if $\cC$ has directed colimits, then the restriction map~\eqref{eq:tdc-to-fin-res-map} is a weak homotopy equivalence.

Eventually we will consider functors defined on $\Span_{\pret}(\TDLC)$ instead of $\Span_{\pret}(\TDC)$.
Let us denote by $\Fun^{\times,cc}(\Span_{\pret}(\TDLC), \cC)$ the space of functors satisfying the continuity condition~\eqref{eq:continuity-condition-for-finite-quots} for compact spaces $X$, and an additional ``compact support'' condition
\begin{equation}\label{eq:cpt-supp-as-colim}
F(Y) \cong \colim_{X} F(X),
\end{equation}
where $X$ runs over the poset of compact open subspaces of $Y$.

\begin{proposition}\label{prop:comp-supp-cont-functors-as-Lan}
The functors $F \in \Fun^{\times,cc}(\Span_{\pret}(\TDLC), \cC)$ are left Kan extensions of their restrictions to $\Span_{\pret}(\TDC)$.
\end{proposition}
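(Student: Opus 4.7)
The plan is to identify the compact support condition~\eqref{eq:cpt-supp-as-colim} with the pointwise formula computing the left Kan extension of $F|_{\Span_{\pret}(\TDC)}$. Let $\mathcal{U}(Y)$ denote the directed poset of compact open subsets of $Y$, and define the ``unit span'' embedding $\iota\colon \mathcal{U}(Y) \to \Span_{\pret}(\TDC)_{/Y}$ by $U \mapsto (U \xleftarrow{=} U \hookrightarrow Y)$. Then~\eqref{eq:cpt-supp-as-colim} reads $F(Y) \cong \colim_{\mathcal{U}(Y)} F \circ \iota$, and it therefore suffices to prove that $\iota$ is cofinal in the $\infty$-categorical sense of~\cite{lur:ht}*{Proposition 4.1.1.8}.

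For cofinality, I would fix a $0$-cell $\sigma = (X \leftarrow Z \xrightarrow{f} Y)$ of $\Span_{\pret}(\TDC)_{/Y}$. Since $X$ is compact and $Z \to X$ is proper, $Z$ is itself compact; since $f$ is étale and hence open, the image $U_0 := f(Z) \subset Y$ is a compact open subspace. For every $U \in \mathcal{U}(Y)$ with $U_0 \subset U$ the span $\sigma$ factors as $\iota(U) \circ_1 (X \leftarrow Z \to U)$, producing a canonical $1$-cell $\alpha_U \colon \sigma \to \iota(U)$.

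The key step is to verify that the comma $\infty$-category
\[
\mathcal{U}(Y) \times_{\Span_{\pret}(\TDC)_{/Y}} (\Span_{\pret}(\TDC)_{/Y})_{\sigma/}
\]
is weakly contractible. Using the $(2,1)$-categorical description from Lemma~\ref{lem:overcat-of-2-1-cat} (in its undercategory form), I would check that its $0$-cells correspond, up to essentially unique $2$-cell, to the pairs $(U, \alpha_U)$ with $U_0 \subset U$, and that $1$-cells reduce to the poset inclusions $U \subset U'$. This identifies the comma category with the nerve of the filtered poset $\{U \in \mathcal{U}(Y) : U_0 \subset U\}$, which is directed and hence weakly contractible.

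The main obstacle will be the careful $(2,1)$-categorical bookkeeping needed to show that $1$-cells $\sigma \to \iota(U)$ are classified (up to contractible choice of $2$-cell data) by the condition $U_0 \subset U$, and that no further higher-dimensional data appears in the comma $\infty$-category. The rigidifying feature is that the left leg of $\iota(U)$ is an identity: any $1$-cell into $\iota(U)$ represented by a span $X \leftarrow W \to U$ forces $W \cong Z$ through the required $2$-cell, after which only the factorization of $f$ through $U$ remains as data, and this is uniquely determined once $U_0 \subset U$.
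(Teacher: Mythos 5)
Your proof is correct and follows essentially the same route as the paper's: the key point in both is that properness forces the middle space of a span out of a compact object to be compact, so that the span factors through the compact open image of its étale leg, making the compact open subspaces cofinal in the slice category. The paper states this factorization and concludes at once, whereas you spell out the cofinality verification via Quillen's Theorem A and the contractibility of the comma category --- detail the paper leaves implicit, having carried out the analogous argument in the proof of Theorem~\ref{thm:additive-infty-ftrs-on-tdlc}.
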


\begin{proof}
Again we need to check that, for each $Y \in \TDLC$, the value $F(Y)$ is a colimit of $F(X)$ for the spans
\begin{equation}\label{eq:span-from-compact}
\begin{tikzcd}
X & U \arrow[l, "p"'] \arrow[r, "f"] & Y
\end{tikzcd}
\end{equation}
in $\Span_{\pret}(\TDLC)$, with $X$ compact.
By the properness assumption on $p$, the intermediate space $U$ is compact, and we get a span $X \leftarrow U \to f(U)$ in $\Span_{\pret}(\TDC)$.
This, together with the inclusion $f(U) \to Y$, factorizes~\eqref{eq:span-from-compact}.
Thus $F(Y)$ is indeed a colimit of $F$ on $\Span_{\pret}(\TDC)_{/Y}$.
\end{proof}

We thus see the functors in $\Fun^{\times,cc}(\Span_{\pret}(\TDLC), \cC)$ are classified by their values on the singleton. 
In particular, for each object $\bE$ in $\TopSpec$ there is an essentially unique functor
\begin{equation}\label{eq:fe}
F^\bE\colon \Span_{\pret}(\TDLC)\to \TopSpec
\end{equation}
such that $F^\bE(\ast)=\bE$.
Concretely, we have $F^\bE(Y) = \Map_c(Y,\bE)$, where the latter stands for a suitably defined compactly-supported mapping spectrum from a locally compact totally disconnected space $Y$ to a CW-spectrum $\bE$.
The homotopy groups of $F^\bE(Y)$ recover the compactly supported $\rE$-cohomology $\rE^n_c(Y)$.
See Section~\ref{subsec:mapcsheaves} for more details, including a sheaf-theoretic interpretation of $F^{\bE}$.

\section{Atiyah transfer}\label{sec:k-theory-transfer}

Let $A$ and $B$ be C$^*$-algebras. A \emph{proper correspondence} from $A$ to $B$ is the data of a right Hilbert $B$-module $E$ together with a $*$-homomorphism $A \to \cK_B(E)$.
Let $X,Y$ be locally compact Hausdorff topological spaces and $f\colon Y\to X$ a surjective local homeomorphism.
We define a proper $C_0(Y)$-$C_0(X)$-correspondence $E_f$ as follows: consider $C_c(Y)$ with the left action of $C_0(Y)$ by multiplication, and right action of $C_0(X)$ given by $\xi\cdot g= \xi \tilde{g}$, where $\tilde{g}$ is the bounded continuous function on $Y$ obtained by lifting $g$.
The right inner product is 
\[
(\xi|\zeta) (x)= \sum_{z\in f^{-1}(x)} \bar{\xi}(z)\zeta(z).
\]
Then $E_f$ is defined as the completion of $C_c(Y)$ with respect to the norm induced by the inner product.
It is a standard fact that the left action just defined is by compact operators. 

\begin{remark}%\label{rem:atitra}
In~\cite{ati:char} Atiyah defines a \emph{transfer map} when $f\colon Y\to X$ is a finite covering between compact Hausdorff spaces.
In this case, given a vector bundle $E$ over $Y$ and its associated sheaf $\Gamma(E)$, the pushforward sheaf $f_*(\Gamma(E))$ defines a vector bundle $f_*(E)$ on $X$.
At level of fibers, we have $f_*(E)_x = \bigoplus_{f(y)=x} E_y$.
Summation over the fibers gives the ``wrong way map'' $f_!\colon \rK^*(Y)\to \rK^*(X)$ between the respective topological $\rK$-groups.
This coincides with the morphism of $\rK$-groups induced by the class $[E_f]\in \KK(C_0(Y),C_0(X))$.
\end{remark}

The functoriality of the assignment $f\mapsto E_f$ can be understood via by using proper \emph{groupoid correspondences} (see Section~\ref{subsec:correspondences}).
Given $f\colon Y\to X$ as above, the ampliation groupoid $X^f$ as in~\eqref{eq:ampliation} has unit space $Y$ and orbit space canonically homeomorphic to $X$.
The groupoid $X^f$ is free and proper, thus there is a Morita equivalence $\Omega=Y$ with left action of $X^f$ and right action of $X$ (i.e., there is a local homeomorphism $f=\sigma\colon Y \to X$). 

Let us denote by $\Omega_f$ the groupoid correspondence obtained from $\Omega$ by restricting the left action from $X^f$ to its unit space $Y$.
Unpacking the definitions, we obtain the following.

\begin{lemma}\label{lem:grcorr-comp}
Let $f\colon Y\to X$ and $g\colon X\to Z$ be (surjective) local homeomorphisms. There is a natural isomorphism of groupoid correspondences between $\Omega_{gf}$ and 
\[
\Omega_g\circ \Omega_f= \Omega_f \times _X \Omega _g
\]
with anchor maps obtained by composition with the canonical projections.
\end{lemma}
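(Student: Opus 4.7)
The plan is to unpack the definitions and exhibit an explicit homeomorphism between the underlying spaces, then check that it intertwines the bispace structures. Since $X$ is viewed as a unit groupoid on both sides of $\Omega_f\colon Y\to X$ and $\Omega_g\colon X\to Z$, there is no quotient to be taken in the composition: the formula $\Omega_f \circ \Omega_g = \Omega_f \times_X \Omega_g$ in the lemma literally means the fiber product of topological spaces with respect to the right anchor of $\Omega_f$ and the left anchor of $\Omega_g$.

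First, I would recall explicitly that $\Omega_f$ is the space $Y$ with left anchor $\id_Y\colon Y\to Y$ and right anchor $f\colon Y\to X$, that $\Omega_g$ is the space $X$ with left anchor $\id_X\colon X\to X$ and right anchor $g\colon X\to Z$, and similarly $\Omega_{gf}$ is the space $Y$ with left anchor $\id_Y$ and right anchor $gf\colon Y\to Z$. In particular, all three spaces carry trivial bispace structures (the actions of the unit groupoids are given by acting through the respective anchor maps).

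Next, define the natural candidate map
\[
\Phi\colon \Omega_{gf}\to \Omega_f\times_X \Omega_g,\quad y\mapsto (y,f(y)).
\]
Since $f(y)\in X$ and the left anchor of $\Omega_g$ is the identity, the pair $(y,f(y))$ lies in the fiber product $\{(y,x)\in Y\times X : f(y)=x\}$. The continuous inverse is the projection $(y,x)\mapsto y$, so $\Phi$ is a homeomorphism. I would then verify directly that $\Phi$ is equivariant: the left anchor of the composition is $(y,x)\mapsto y$, which matches the identity anchor of $\Omega_{gf}$, and the right anchor is $(y,x)\mapsto g(x)=g(f(y))=(gf)(y)$, which matches the right anchor of $\Omega_{gf}$. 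Since the unit-groupoid actions act through the anchor maps, compatibility with the bispace structure is immediate.

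Finally, naturality of $\Phi$ with respect to $f$ and $g$ amounts to observing that the construction $y\mapsto (y,f(y))$ is a pullback-square instance and so is preserved by base change along any further local homeomorphism. Because every ingredient is a tautological identification, I do not anticipate any real obstacle; the only small point worth stressing is that the fiber product description of composition genuinely simplifies here precisely because $X$ has no nontrivial arrows as a groupoid, so the usual quotient by the $X$-action in the definition of composition of groupoid correspondences is trivial.
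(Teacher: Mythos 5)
Your proof is correct and is exactly the tautological unwinding the paper has in mind: the paper states the lemma without proof, remarking only that it ``is easy to prove,'' and your identification $y\mapsto(y,f(y))$ of $\Omega_{gf}$ with the fiber product (with the quotient by the middle groupoid being trivial since $X$ is a unit groupoid) is the expected argument. The only nit is a transposition in your first paragraph, where you write $\Omega_f\circ\Omega_g$ for what the lemma (and the rest of your own argument) calls $\Omega_g\circ\Omega_f$.
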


Groupoid correspondences form a $2$-category $\GrCorr$. We are going to consider the subcategory of proper groupoid correspondences $\PrGrCorr$. We denote by $\PrCorr$ the $2$-category of proper C$^*$-algebra correspondences. Thanks to~\cite{akm:grcorr}*{Theorems 7.13 and 7.14}. We have a functor of $(2,1)$-categories 
\[
E\colon\PrGrCorr\to \PrCorr
\]
where $\Omega\colon G\to H$ is sent to the correspondence $E(\Omega)\colon C^*G\to C^*H$. Here we are considering the full C$^*$-algebras, however we will apply this construction to amenable groupoids only so the distinction will not matter (note that proper groupoids are amenable,~\cite{renroch:amgrp}*{Cor.~2.1.17 \& Proposition~6.1.10}).

It follows easily from the construction that $E_f \cong E(\Omega_f)$. From Lemma~\ref{lem:grcorr-comp} we have $E_{gf}\cong E_g\circ E_f$. If $f\colon Y\to X$ is a $G$-equivariant map between free and proper $G$-spaces, $\Omega_f$ carries a $G$-action and it gives a $G$-equivariant correspondence $\Omega^G_f\colon Y\to X$. Applying $E$ to  $\Omega^G_f\colon Y\to X$ yields a $G$-equivariant C$^*$-algebra correspondence
\[
E^G(\Omega^G_f)=E^G_f \colon C_0(Y)\to C_0(X).
\]

Let $X \leftarrow Z \to Y$ be a span in $\Span_{\pret}(\TDLC^G)$, where $\TDLC^G$ is the category of free and proper $G$-spaces which are locally compact, Hausdorff, and totally disconnected, with $G$-equivariant continuous maps as morphisms (this is the $G$-equivariant version of the $(2,1)$-category $\Span_{\pret}(\TDLC)$ introduced in Section~\ref{sec:span-tot-disc}). Let $g$ (respectively, $f$) denote the left (respectively, right) leg in $X \leftarrow Z \to Y$. We define $\Omega^G_Z$ as the modification of $\Omega^G_f$ obtained by considering $g\colon \Omega\to X$ as the left anchor map (recall that $\Omega=Z$ as topological spaces). We obtain a map of $(2,1)$-categories 
\[
\Span_{\pret}(\TDLC^G) \to \PrCorr^G,
\]
sending the span $X \leftarrow Z \to Y$ to $E^G(\Omega^G_Z)=E^G(Z)$. For an object $X\in \TDLC^G$, let us denote by $q_X$ the quotient map $X\to G\backslash X$. Note $G\ltimes X$ is Morita equivalent to $G\backslash X$ through $\Omega^G_{q_X}$. 

\begin{proposition}\label{prop:span-corr-comm}
The following diagram commutes up to $2$-categorical equivalence.
\[
\begin{tikzcd}
\Span_{\pret}(\TDLC^G) \arrow[r, "E^G"] \arrow[d, "G\backslash(-)"] & \PrCorr^G \arrow[d, "- \rtimes G"] \\
\Span_{\pret}(\TDLC) \arrow[r, "E"]                 & \PrCorr               
\end{tikzcd}
\]
More precisely, given  $X \leftarrow Z \to Y$ in $\Span_{\pret}(\TDLC^G)$, there is a natural isomorphism between 
\[
E^G_{q_Y} \circ  E^G(Z)\rtimes G \circ (E^G_{q_X})^{\mathrm{op}}\quad \mathrm{ and } \quad E(G\backslash Z)
\]
as correspondences $C_0(G\backslash X)\to C_0(G\backslash Y)$.
\end{proposition}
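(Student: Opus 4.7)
The plan is to lift the identity to a statement about proper groupoid correspondences, where the verification is purely topological, and then to invoke the $(2,1)$-functor $E \colon \PrGrCorr \to \PrCorr$ of \cite{akm:grcorr}*{Theorems 7.13 \& 7.14}, which preserves composition and $2$-cells.

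First, I would assemble the data on the groupoid side. The span $X \leftarrow Z \to Y$ in $\Span_{\pret}(\TDLC^G)$ gives the equivariant proper groupoid correspondence $\Omega^G_Z \colon G\ltimes X \to G\ltimes Y$ (with left anchor $g$ and right anchor $f$). The quotient maps $q_X \colon X \to G\backslash X$ and $q_Y \colon Y \to G\backslash Y$ yield Morita-equivalence correspondences $\Omega^G_{q_X} \colon G\ltimes X \to G\backslash X$ and $\Omega^G_{q_Y} \colon G\ltimes Y \to G\backslash Y$ (here $G\backslash X$ and $G\backslash Y$ are viewed as trivial groupoids). Under $E$ together with crossed product by $G$, these three objects descend to $E^G(Z)\rtimes G$, $E^G_{q_X}$ and $E^G_{q_Y}$, and the Morita inverse $(\Omega^G_{q_X})^{-1}$ corresponds to $(E^G_{q_X})^{\mathrm{op}}$.

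Second, I would establish the key identity at the groupoid level:
\[
\Omega^G_{q_Y} \circ \Omega^G_Z \circ (\Omega^G_{q_X})^{-1} \cong \Omega_{G\backslash Z}
\]
as correspondences $G\backslash X \to G\backslash Y$. Composition of groupoid correspondences is given by fiber product over the intermediate unit space followed by the quotient by the intermediate groupoid. Computing: the fiber product $X \times_X Z$ along $\sigma = \id$ and $\rho = g$ is identified with $Z$; quotienting by the diagonal $G\ltimes X$-action and using freeness of the $G$-action on $X$ reduces it to $G\backslash Z$. A similar step on the right, pairing $Z$ with $Y$ along $f = \sigma$ and $\id = \rho$ and then quotienting by the $G\ltimes Y$-action, preserves this identification. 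The resulting anchor maps descend, by $G$-equivariance, to $G\backslash g$ (proper) and $G\backslash f$ (étale), matching exactly the legs of the quotient span and hence $\Omega_{G\backslash Z}$. Naturality in isomorphisms of spans is transparent from this topological picture.

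Third, applying the $(2,1)$-functor $E$ to the isomorphism of the previous step yields the desired isomorphism of C$^*$-algebra correspondences, since $E$ takes composition to composition and Morita-inverses to Morita-inverses. The main obstacle is Step~2, specifically the explicit bookkeeping of the iterated fiber product and verifying that the diagonal actions of the transformation groupoids collapse cleanly onto $G\backslash Z$; this relies crucially on the freeness and properness of the $G$-actions on $X$ and $Y$, which guarantees that the two quotient stages do not introduce extra identifications beyond those yielding $G\backslash Z$.
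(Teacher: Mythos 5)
Your overall strategy coincides with the paper's: reduce the statement to an isomorphism of proper \emph{groupoid} correspondences and then push it through the $(2,1)$-functor $E\colon \PrGrCorr\to\PrCorr$ of \cite{akm:grcorr}. However, there are two concrete problems in the execution. First, the object you place in the middle of the groupoid-level composition is wrong: $Z$ with left anchor $g$ and right anchor $f$ is \emph{not} a correspondence $G\ltimes X\to G\ltimes Y$, because the left anchor of a bispace must be invariant under the right action, and $g(\gamma^{-1}z)\neq g(z)$ in general. The correct model is the semidirect-product correspondence $G\ltimes\Omega^G_Z$ with underlying space $Z\times_{\rho,r}G$, and the identification $E^G(Z)\rtimes G\cong E(G\ltimes\Omega^G_Z)$ is itself a nontrivial verification (matching the inner products of the crossed-product module with those of the correspondence attached to $Z\times_{\rho,r}G$; this is the content of Remark~\ref{rem:diagj}). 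Your phrase ``under $E$ together with crossed product by $G$, these objects descend to $E^G(Z)\rtimes G$'' silently assumes exactly this compatibility, which is where the real work in relating the two vertical arrows of the square lies.

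Second, the fiber-product bookkeeping in your Step~2 does not parse as written: if you quotient by the diagonal $G\ltimes X$-action at the first stage and obtain $G\backslash Z$, the map $f$ does not descend to $G\backslash Z\to Y$ and there is no residual $G\ltimes Y$-action left, so the subsequent composition with $\Omega^G_{q_Y}$ cannot be formed. In the correct computation (with $Z\times_{\rho,r}G$ in the middle) the first composition collapses back to $Z$ with its $G$-action intact, and the quotient $G\backslash Z$ only appears after the final composition with $\Omega^G_{q_Y}$. You flag this bookkeeping as the ``main obstacle'' but do not resolve it. The paper circumvents the direct computation by factorizing $\Omega^G_Z\cong\Omega^G_f\circ\Omega^g_G$, commuting $\Omega^G_{q_Y}$ past $G\ltimes\Omega^G_f$ via $q_Y\circ f=\bar f\circ q_Z$, and identifying the remaining piece with $\Omega^{\bar g}\circ\Omega^G_{q_X}$ by the rigidity of morphisms of principal $G$-bundles; it also proves the identity in the form $\Omega^G_{q_Y}\circ(G\ltimes\Omega^G_Z)\cong\Omega_{G\backslash Z}\circ\Omega^G_{q_X}$, which avoids inverting $\Omega^G_{q_X}$ at the groupoid level. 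To complete your argument you would need to supply the correct middle object, prove the crossed-product compatibility, and redo the composition so that the $G$-quotient occurs only where the composition formula dictates.
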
 

The right vertical map above is defined in analogy with Kasparov's descent functor in $\KK^G$-theory, see for example~\cite{lg:kk}*{Theorem 3.4}. In order to simplify calculations, it is useful to introduce a functor 
\[
G\ltimes (-)\colon \Span_{\pret}(\TDLC^G)\to \PrGrCorr
\]
defined as follows. Given $\Omega^G_Z$, define the underlying topological space for $G\ltimes \Omega^G_Z$ as the fiber product $Z\times_{\rho,r} G$, where $\rho\colon Z\to G^{(0)}$ is the structure map and $r\colon G\to G^{(0)}$ is the range map. Note that $\rho$ factors through $g$ and through $f$. The left anchor map is given by projection on the first coordinate followed by $g$. The right anchor is given by $(z,\gamma)\mapsto \gamma^{-1}\cdot f(z)$. The left and right action are then given by 
\[
(\gamma^\prime,x) \cdot  (z, \gamma)= (\gamma ^\prime \cdot z, \gamma ^\prime\gamma) \qquad\quad (z, \gamma)\cdot (\gamma^\prime,y^\prime)= (z, \gamma \gamma^\prime),
\]
where $(\gamma^\prime,x^\prime )\in G\ltimes X$ has range $x^\prime$, $(\gamma,y)\in G\ltimes Y$ has range $y$, $(z, \gamma)$ satisfies $\rho(z)=r(\gamma)$, $g(z)=x$ (on the left), and $\gamma^{-1}\cdot f(z)=y^\prime$ (on the right).

We have defined a proper groupoid correspondence
\[
G\ltimes \Omega^G_Z\colon G\ltimes X \to G\ltimes Y.
\]

\begin{remark}\label{rem:diagj}
It is not hard to verify that the diagram
\begin{equation}\label{eq:diagj}
\begin{tikzcd}
\Span_{\pret}(\TDLC^G) \arrow[r, "E^G"] \arrow[d, "G\ltimes -"] & \PrCorr^G \arrow[d, "- \rtimes G"] \\
\PrGrCorr \arrow[r, "E"]                 & \PrCorr               
\end{tikzcd}
\end{equation}
commutes up to natural equivalence.
Indeed, if we denote by $\mathfrak E$ the upper semicontinuous bundle associated to a Hilbert $G$-module $E$, then $E\rtimes G$ is a completion of $\Gamma_c(G;r^*\mathfrak{E})$ with inner product
\begin{equation}\label{eq:innprod}
(\xi|\zeta) (\gamma)= \sum_{s(\eta)=r(\gamma)} \eta^{-1}\cdot ({\xi}(\eta)|\zeta(\eta\gamma)).
\end{equation}
For a $G$-span $X \leftarrow Z \to Y$, we have that $E^G(Z)$ is a completion of $\Gamma_c(Z)$, so that $\Gamma_c(G;r^*\mathfrak{E})$ is a completion of $\Gamma_c(Z\times_{\rho,r} G)$ as prescribed by $E(G\ltimes \Omega^G_Z)$. The inner product in~\eqref{eq:innprod} corresponds to
\begin{equation*}%\label{eq:innprof}
(\xi|\zeta) (\gamma,y)= \sum_{s(\eta)=r(\gamma)} \sum_{f(z)=\eta\cdot y}(\bar{\xi}(z,\eta)|\zeta(z,\eta\gamma)),
\end{equation*}
which can be rewritten as 
\begin{equation}\label{eq:innprod2}
(\xi|\zeta) (\gamma,y)= \sum_{d(x)=y} \bar{\xi}(x)|\zeta(x\cdot (\gamma,y)),
\end{equation}
where $d\colon Z\times_{\rho,r} G\to Y$ is the right anchor map defined above for $G\ltimes \Omega^G_Z$.
Equation~\eqref{eq:innprod2} is the inner product as prescribed by $E(G\ltimes \Omega^G_Z)$, thus the inner products match up. The verification of compatibility for left and right action is proved just as easily, hence passing to completion we have that diagram~\eqref{eq:diagj} does indeed commute up to isomorphism of C$^*$-correspondences.
\end{remark}

\begin{proposition}
There is a well-defined functor $G\ltimes -\colon \Span_{\pret}(\TDLC^G)\to \PrGrCorr$. In particular there are natural isomorphisms 
\[
G\ltimes (Z\circ Z^\prime)\cong G\ltimes Z \circ G\ltimes Z^\prime,
\]
where $Z\circ Z^\prime$ denotes a composition of spans in $\Span_{\pret}(\TDLC^G)$.
\end{proposition}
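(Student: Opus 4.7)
The plan is to verify the functoriality axioms directly at the level of bispaces. On objects, the assignment $X \mapsto G \ltimes X$ is fixed, and on $1$-cells by the construction of $G \ltimes \Omega^G_Z$ carried out before the statement. For $2$-cells, essentially nothing is required: an isomorphism of $G$-equivariant spans $Z \to \tilde Z$ intertwines all anchors and actions, and hence induces an isomorphism of the fiber products $Z \times_{\rho, r} G$ as bispaces. Identity preservation is equally immediate, since applying the construction to the trivial span $X \stackrel{\id}{\leftarrow} X \stackrel{\id}{\to} X$ returns the bispace $X \times_{\rho, r} G$ whose anchors and actions recover exactly the transformation groupoid $G \ltimes X$ viewed as its own identity correspondence.

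The substantive content is the natural isomorphism for composition. Writing $Z \colon X \to Y$ and $Z' \colon Y \to W$ with legs $(g_Z, f_Z)$ and $(g_{Z'}, f_{Z'})$, the left-hand side $G \ltimes (Z \times_Y Z')$ unpacks to the space of triples $(z, z', \gamma)$ with $f_Z(z) = g_{Z'}(z')$ and $\rho(z) = r(\gamma)$, while the right-hand side is the quotient of the set of pairs $((z, \gamma), (z', \gamma'))$ with $\gamma^{-1} \cdot f_Z(z) = g_{Z'}(z')$ by the balancing relation
\[
((z, \gamma \eta), (z', \gamma')) \;\sim\; ((z, \gamma),\, (\eta \cdot z',\, \eta \gamma'))
\]
induced by the right $G \ltimes Y$-action on the first factor and the left $G \ltimes Y$-action on the second. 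My proposal is to define the comparison by
\[
\Phi(z, z', \gamma) = \bigl[(z, \gamma),\; (\gamma^{-1} \cdot z',\, e)\bigr],
\]
where $e$ denotes the unit arrow at the appropriate base point, with candidate inverse $[(z, \tilde\gamma), (w', e)] \mapsto (z, \tilde\gamma \cdot w', \tilde\gamma)$. The decisive observation is that every class on the right admits a unique representative with trivial second groupoid component: starting from $((z, \gamma), (z', \gamma'))$, choosing $\eta = (\gamma')^{-1}$ in the relation above produces precisely such a representative. This yields a set-theoretic bijection, and continuity of $\Phi$ and its inverse follows since the quotient map defining the balanced product is open.

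It then remains to check that $\Phi$ intertwines both anchors, the left $(G \ltimes X)$-action, and the right $(G \ltimes W)$-action, which are direct verifications from the formulas given before the statement, and that it is natural in the spans, which is built into the pointwise definition. I expect the main obstacle to be purely notational: the symbol $\gamma^{-1}$ plays two distinct roles (as an arrow inverse in $G$ and as the operator moving points between fibers of $\rho$), and one must stay consistent with the chosen conventions on the order of composition, on the range/source of arrows in $G \ltimes Y$, and on the direction of the left versus right actions. Once these conventions are pinned down at the outset, the comparison map is essentially forced by requiring the two anchors to match on either side, and no higher-categorical subtleties arise, since in $\PrGrCorr$ the $2$-cells are just isomorphisms of bispaces.
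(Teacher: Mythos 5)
The paper states this proposition without proof (the \texttt{proof} environment that follows it in the source belongs to Proposition~\ref{prop:span-corr-comm}), so there is nothing to compare against; the authors evidently regard the verification as routine, and your write-up supplies exactly the routine verification one would want. Your key computation is correct: the balancing relation for the $G\ltimes Y$-balanced product is stated with the right orientation, every class does admit a unique representative whose second groupoid coordinate is a unit (uniqueness because the $G\ltimes Y$-action used for balancing is free in the groupoid coordinate), and your $\Phi$ and its proposed inverse are mutually inverse and intertwine both anchors and both actions --- I checked the left/right action compatibility and it comes out as you predict. For continuity of the inverse, it is slightly cleaner to observe that the full formula $\bigl((z,\alpha),(z',\gamma')\bigr)\mapsto \bigl((z,\alpha\cdot z'),\alpha\gamma'\bigr)$ is continuous on the fiber product and constant on balancing orbits, so it descends; this avoids invoking openness of the quotient map, though that route works too since $G\ltimes Y$ is \'etale. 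Two items you leave implicit and should at least record for ``well-definedness'': first, that $Z\times_{\rho,r}G$ with the given anchors really is a $1$-cell of $\PrGrCorr$ (the right $G\ltimes Y$-action is free, proper and \'etale because the right leg $f$ is \'etale and $G$ is \'etale, and the orbit space is homeomorphic to $Z$ via $(z,\gamma)\mapsto z$, so $\bar\rho$ is the proper left leg $g$); second, since $\PrGrCorr$ and $\Span_{\pret}(\TDLC^G)$ are $(2,1)$-categories, a ``functor'' here means a pseudofunctor, so one should also note that your compositors satisfy the associativity and unit coherence --- immediate from your normal-form description, since both triple composites normalize to the same representative with all balancing coordinates set to units. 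Neither point is a gap in the idea; both follow from what you have already set up.
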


\begin{proof}[Proof of Proposition~\ref{prop:span-corr-comm}]
The squares in the diagram
\[
\begin{tikzcd}
X  \arrow[d, "q_X"] & Z \arrow[l, "g"'] \arrow[r, "f"] \arrow[d, "q_Z"] & Y \arrow[d, "q_Y"] \\
G\backslash X  & G\backslash Z  \arrow[l, "\bar{g}"'] \arrow[r, "\bar{f}"]                 & G\backslash Y               
\end{tikzcd}
\]
commute.
Note that there is an isomorphism $\Omega^G_Z\cong \Omega^G_f\circ \Omega_G^g$, where $\Omega_G^g\colon X\to Z$ is given by $Z$ with left anchor map $g$ and right anchor map $\id_Z$. By functoriality $ G\ltimes \Omega^G_Z$ is isomorphic to $G\ltimes \Omega^G_f\circ G\ltimes \Omega_G^g$. 
We have an isomorphism $\Omega^{G}_{q_Y}\circ (G\ltimes \Omega^G_f) \cong \Omega^G_{q_Y f} \cong \Omega^G_{\bar{f} q_Z}\cong \Omega_{\bar{f}}\circ \Omega^G_{q_Z}$. Now we have an isomorphism $\Omega^G_{q_Z}\circ (G\ltimes \Omega^g_G)\cong \Res^G_{G\ltimes X}(\Omega^G_{q_Z})$, where the latter correspondence is just $\Omega^G_{q_Z}$ with left action induced along the morphism $G\ltimes X \to G$ with anchor map $g^{(0)}$. Consider the correspondence $\Omega^{\bar{g}}\circ \Omega^G_{q_X}$. Its underlying topological space is the fiber product, denoted $Z_0$, over $X \to G\backslash X \leftarrow G\backslash Z$. By the universal property of pullbacks, we get a map $Z\to Z_0$. This map is compatible with the left and right actions, giving us a map $\Res^G_{G\ltimes X}(\Omega^G_{q_Z})\to \Omega^{\bar{g}}\circ \Omega^G_{q_X}$. This map is an isomorphism because morphisms between principal $G$-bundles over the same base are necessarily isomorphisms. Overall, we have
 \[
\Omega^{G}_{q_Y}\circ (G\ltimes \Omega^G_Z) \cong \Omega^{G}_{q_Y}\circ (G\ltimes \Omega^G_f)\circ (G\ltimes \Omega^g_G) \cong \Omega_{\bar{f}}\circ \Omega^{\bar{g}}\circ \Omega^G_{q_X}\cong \Omega_{G\backslash Z}\circ \Omega^G_{q_X}.
\]
The claim follows applying $E^G$ and using that $E^G(-)\rtimes G \cong E(- \rtimes G)$ (see Remark~\ref{rem:diagj}).
\end{proof}

Let $E$ be a proper correspondence between C$^*$-algebras $A$ and $B$.
Then any $C$-$A$-Kasparov cycle $(H, F)$ gives a $C$-$B$-Kasparov cycle $(H \otimes_A E, F \otimes \id)$, which induces a map $\KK(C, A) \to \KK(C, B)$.

\begin{proposition}\label{prop:simplicially-enriched-model-of-atiyah-transfer}
There is an $\infty$-functor 
\[
T_{\icKK^G}\colon \PrCorr^G \to \icKK^G
\]
which lifts the Atiyah transfer construction.
\end{proposition}

\begin{proof}
In~\cite{arXiv:2508.21601} Meyer proves a theorem establishing that the $(2, 1)$-category $\PrCorr$ is in fact a Dwyer--Kan localization of the category of C$^*$-algebras and $\ast$-homomorphisms with respect to the class of homomorphisms of the form $A \to \cK(A \oplus E)$ for separable right Hilbert $A$-modules $E$.
These maps are $\KK^G$-equivalences, hence a straightforward generalization of his theorem to the $G$-equivariant setting implies the claim.
We present an alternative proof in Section~\ref{app:alt-proof}.
\end{proof}

\subsection{Transfer to the category of spectra}

Recall that the functor $\Map_{\icKK}(\dC, -)\colon \icKK \to \sSet_*$ lifts to a functor $\map_{\icKK}(\dC, -)$ to the category $\Spec$ of spectra~\cite{landnik:comp}*{Lemma 3.6}.
Composing this with the $\infty$-functor $T_\icKK$ from Proposition~\ref{prop:simplicially-enriched-model-of-atiyah-transfer} for the trivial groupoid, and the construction $E_f$, we obtain an $\infty$-functor from the category of locally compact spaces with étale maps to $\Spec$, sending $Y$ to a spectrum $\Phi(Y)$ such that $\Omega^\infty \Phi(Y)$ is weakly equivalent to the Kan complex $\Map_{\icKK}(\dC, C_0(Y))$. In other words, $\Phi$ is defined as

\begin{equation*}%\label{eq:comp-fun}
\Span_{\pret}(\TDLC)\overset{E}{\longrightarrow} \PrCorr\xrightarrow{T_\icKK} \icKK \xrightarrow{\map_{\KK}(\dC, -)}\Spec.
\end{equation*}

\begin{proposition}\label{prop:Phi-has-prop-cc}
The $\infty$-functor $\Phi$ satisfies conditions~\eqref{eq:continuity-condition-for-finite-quots} and~\eqref{eq:cpt-supp-as-colim}.
\end{proposition}
\begin{proof}
This follows from the fact that $\map_{\KK}(\dC, -)$ preserves colimits.
\end{proof}

Let $M$ be a commutative group.
We denote by $\Psi_M$ the $\infty$-functor given by Example~\ref{ex:transfer-EM-funcs}. Thus, it is the composition
\[
\begin{tikzcd}
\Span_{\pret}(\TDLC) \arrow[r] & \Ab \arrow[r, "H"] & \Spec,
\end{tikzcd}
\]
where the first step is given by $Y \mapsto C_c(Y, M)$ at the level of $0$-cells.

\begin{proposition}\label{prop:final-transfer}
The rationalization $\Phi(-) \wedge \bH\dQ$ is naturally isomorphic to $\Psi_{\dQ[u, u^{-1}]}$.
\end{proposition}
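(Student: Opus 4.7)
The plan is to invoke the classification provided by Theorem~\ref{thm:additive-infty-ftrs-on-tdlc} together with Proposition~\ref{prop:comp-supp-cont-functors-as-Lan}: both $\Phi \wedge \sH\dQ$ and $\Psi_{\dQ[u,u^{-1}]}$ should be shown to belong to $\Fun^{\times, cc}(\Span_{\pret}(\TDLC), \TopSpec)$, where functors are classified up to contractible choice by their value on the one-point space. Since the classical Chern character equivalence~\eqref{eq:chern-equiv} exhibits a canonical identification between the two values at $\ast$, the claimed natural isomorphism should follow.

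First I would verify that $\Phi \wedge \sH\dQ$ inherits the three required properties (product preservation, the finite-quotient continuity condition~\eqref{eq:continuity-condition-for-finite-quots}, and the compactly supported presentation~\eqref{eq:cpt-supp-as-colim}) from $\Phi$; the latter has them by Corollary~\ref{cor:Phi-has-prop-cc}, and smashing with the CW-spectrum $\sH\dQ$ is an exact operation on spectra that commutes with the relevant filtered homotopy colimits (cf.\ Proposition~\ref{prop:trans-fun-spec} and Corollary~\ref{cor:rat-map-spec-from-tot-disc-cpt}). Next I would check the analogous properties for $\Psi_{\dQ[u,u^{-1}]}$, which are essentially algebraic in nature: a disjoint union induces a direct sum decomposition of $C_c(-,M)$; the Eilenberg--MacLane functor $H$ sends direct sums to wedges and preserves filtered colimits; and for a profinite presentation $X = \varprojlim_n K_n$ one has $C(X,M) \cong \varinjlim_n C(K_n, M)$, while for general $Y \in \TDLC$ one has $C_c(Y, M) \cong \varinjlim_X C(X, M)$ over compact open subspaces $X \subset Y$.

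Then I would evaluate both functors at the singleton. On the one hand,
\[
(\Phi \wedge \sH\dQ)(\ast) \simeq \dK(\dC) \wedge \sH\dQ \simeq \KU \wedge \sH\dQ,
\]
using Proposition~\ref{prop:eqkickk} together with Corollary~\ref{cor:zz-ku-dekm} to identify $\dK(\dC)$ with $\KU$; on the other,
\[
\Psi_{\dQ[u,u^{-1}]}(\ast) = H\dQ[u,u^{-1}].
\]
The equivalence between these spectra is precisely~\eqref{eq:chern-equiv}. With this in hand, Theorem~\ref{thm:additive-infty-ftrs-on-tdlc} forces the restrictions of both functors to $\Span_{\pret}(\TDC)$ to agree, and Proposition~\ref{prop:comp-supp-cont-functors-as-Lan} identifies both sides with their essentially unique left Kan extensions to $\Span_{\pret}(\TDLC)$.

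The main subtlety I anticipate lies in the first step: confirming that $- \wedge \sH\dQ$ yields a bona fide $\infty$-functor post-composition with $\Phi$ (rather than a mere objectwise assignment) and that the three hypotheses of the classification theorem propagate through this operation. This should follow from standard properties of the smash product on CW-spectra together with the compactly-supported model established in Section~\ref{sec:span-tot-disc}, but it is the only genuinely $\infty$-categorical bookkeeping point; once it is in place, the rest of the argument is a formal application of the universal property of $\Fun^{\times,cc}(\Span_{\pret}(\TDLC), C)$.
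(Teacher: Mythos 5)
Your proposal is correct and follows essentially the same route as the paper: both functors are shown to lie in $\Fun^{\times,cc}(\Span_{\pret}(\TDLC), \TopSpec)$ (via Corollary~\ref{cor:Phi-has-prop-cc} for $\Phi\wedge\sH\dQ$ and the elementary colimit identities $C(X,M)=\varinjlim_i C(K_i,M)$, $C_c(Y,M)=\varinjlim_n C(X_n,M)$ for $\Psi_M$), and then the classification of Theorem~\ref{thm:additive-infty-ftrs-on-tdlc} together with Proposition~\ref{prop:comp-supp-cont-functors-as-Lan} reduces everything to the value at the singleton, where~\eqref{eq:chern-equiv} applies. Your extra attention to the fact that smashing with $\sH\dQ$ commutes with the relevant filtered colimits is a reasonable expansion of a step the paper leaves implicit.
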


\begin{proof}
We know that $Y \mapsto \Phi(Y) \wedge \bH\dQ$ is in $\Fun^{\times,c}(\Span_{\pret}(\TDLC), \Spec)$ by Proposition~\ref{prop:Phi-has-prop-cc}.
The $\infty$-functors $\Psi_M$ are also in $\Fun^{\times,c}(\Span_{\pret}(\TDLC), \Spec)$, as conditions~\eqref{eq:continuity-condition-for-finite-quots} and~\eqref{eq:cpt-supp-as-colim} reduce to
\[
C(X, M) = \varinjlim_i C(K_i, M), \quad
C_c(Y, M) = \varinjlim_n C(X_n, M)
\]
for the finite factor spaces $(K_i)_i$ of $X$ and the compact open sets $(X_n)_n$ of $Y$ appearing in these conditions.
Proposition~\ref{prop:comp-supp-cont-functors-as-Lan} and the equivalence in~\eqref{eq:chern-equiv} give the desired comparison.
\end{proof}

Since $\gr{\map_{\KK}(\dC, \dC)}\cong \KU$, the functors $\Phi$ and $\Psi_M$ appearing in Proposition~\ref{prop:final-transfer} can be expressed in terms of $F^\bE$ (after composing with geometric realization) as follows:
\begin{equation}\label{eq:feid}
\gr{\Phi(-)} \cong F^{\KU},\qquad \gr{\Phi(-)} \wedge \bH\dQ \cong F^{\KU\wedge \bH\dQ},\qquad \gr{\Psi_M(-)}\cong F^{\bH M}. 
\end{equation}

\section{The Baum--Connes conjecture for étale groupoids}\label{sec:bc-conj}

In this section we discuss the Baum--Connes conjecture in the context of the equivariant $\KK$-category $\icKK^G$. This is a stable $\infty$-category whose homotopy category is the $G$-equivariant Kasparov category $\KK^G$ as defined by Le Gall~\cite{lg:kk}, as explained in Appendix~\ref{sec:kkg-stable}.
Throughout this section we assume that $G$ is a second countable, locally compact, Hausdorff, étale groupoid.

\subsection{Cellular approximation}\label{subsec:capp}

Let $H\subseteq G$ be an open subgroupoid. Denote by $\CstAlg^G$ the category of $G$-C$^*$-algebras and $G$-equivariant $\ast$-homomorphisms. We have natural induction and restriction functors, denoted $\Ind_H^G\colon \CstAlg^H\to \CstAlg^G$ and $\Res^G_H\colon \CstAlg^G\to \CstAlg^H$, inducing functors between the corresponding $\infty$-categories $\icKK^G$ and $\icKK^H$. The induction functor $\CstAlg^H\to \CstAlg^G$ can be concretely defined as $A\mapsto (C_0(G_{H^{0}})\,^{s}\!\otimes_{H^{0}} A)\rtimes H$. We are particularly interested in the case where $H=X$, i.e., the trivial subgroupoid with no nontrivial arrows and unit space equal to $X$ itself, in which case $\Ind_X^G$ is simply defined as $A\mapsto C_0(G)\,^{s}\!\otimes_X A$.

As proved in~\cite{val:kthpgrp}*{Section 2}, the functor $\Ind_H^G$ is left adjoint to $\Res^G_H$, when both are viewed as functors between the underlying triangulated categories $\KK^G$ and $\KK^H$.
We claim that this can be upgraded to adjointness of functors between $\infty$-categories in the sense of~\cite{lur:ht}*{Section 5.2}.

\begin{proposition}\label{prop:adj}
$\Ind_H^G$ is left adjoint to $\Res^G_H$, when both functors are viewed as functors between the stable $\infty$-categories $\icKK^G$ and $\icKK^H$.
\end{proposition}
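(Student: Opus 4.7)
The plan is to upgrade the triangulated-categorical adjunction established in \cite{val:kthpgrp}*{Section 2} to the $\infty$-categorical level by producing a unit natural transformation of $\infty$-functors and verifying that it induces equivalences on mapping spectra.

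First, I would construct a unit transformation $\eta \colon \id_{\icKK^H} \to \Res^G_H \circ \Ind_H^G$ of $\infty$-functors on $\icKK^H$. The unit of the triangulated adjunction in \cite{val:kthpgrp} is given by the KK-class of a canonical $H$-equivariant $*$-homomorphism (or correspondence) $A \to \Res^G_H \Ind_H^G A$, strictly natural in $A \in \CstAlg_H$. Composing this strict natural transformation with the localization $N(\CstAlg_H) \to \icKK^H$ recalled in Appendix~\ref{sec:kkg-stable} yields a natural transformation of the associated $\infty$-functors, since natural transformations between $1$-categorical functors automatically descend to any Dwyer--Kan-style localization.

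Second, I would verify that $\eta$ exhibits an adjunction by checking, for every $A \in \icKK^H$ and $B \in \icKK^G$, that the induced map of mapping spectra
\[
\map_{\icKK^G}(\Ind_H^G A, B) \to \map_{\icKK^H}(A, \Res^G_H B)
\]
is an equivalence. By the stability of $\icKK^G$ and $\icKK^H$, the $n$-th homotopy group of each side is $\KK^G_n(\Ind_H^G A, B)$ and $\KK^H_n(A, \Res^G_H B)$ respectively; since $\Ind_H^G$ and $\Res^G_H$ both commute with suspension, applying the triangulated adjunction of \cite{val:kthpgrp} to the pair $(A, \Sigma^{-n} B)$ identifies these groups. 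A short diagram chase identifies the map induced by $\eta$ on $\pi_n$ with this identification, so $\eta$ is a $\pi_*$-isomorphism of spectra, hence an equivalence.

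The main obstacle is the first step: one must check that the triangulated unit from \cite{val:kthpgrp} genuinely lifts to a strict natural transformation of functors $\CstAlg_H \to \CstAlg_H$, rather than one defined only up to KK-equivalence. An inspection of that construction shows the relevant $H$-equivariant homomorphism depends functorially on $A$ without any choices, so this lift is immediate. Should a more abstract route be preferred, one can instead invoke the $\infty$-categorical adjoint functor theorem: $\Res^G_H$ preserves small limits (a claim that reduces, via mapping spectra, to the corresponding triangulated statement) and $\icKK^G$ is presentable, so a left adjoint exists, which one then identifies with $\Ind_H^G$ by comparing their values on a generating family using the triangulated result.
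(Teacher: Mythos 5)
Your overall strategy coincides with the paper's: realize the unit as a morphism in $\icKK^H$ coming from a concrete C$^*$-algebraic construction, then reduce the equivalence of mapping spectra to the triangulated adjunction of \cite{val:kthpgrp} using compatibility with suspension (the paper phrases this as compatibility with fiber sequences plus a $\pi_0$ check, which is the same maneuver). The gap sits exactly at the point you single out as ``the main obstacle'' and then dismiss as immediate. The unit from \cite{val:kthpgrp}*{Theorem 2.3} is \emph{not} a strict natural $*$-homomorphism $B \to \Res^G_H\Ind_H^G(B)$: it is the composite of the Morita equivalence $B \sim \Ind_H^H(B)$ with the inclusion $\Ind_H^H(B) \hookrightarrow \Res^G_H\Ind_H^G(B)$, and the first leg is a zigzag, not a $*$-homomorphism. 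Hence it does not descend to $\icKK^H$ by merely postcomposing a $1$-categorical natural transformation with the localization functor. The paper resolves this by noting that the composite is a proper $H$-equivariant correspondence and invoking the $\infty$-functorial lift of the correspondence bicategory into $\icKK^H$ (Proposition~\ref{prop:simplicially-enriched-model-of-atiyah-transfer}); that machinery is a genuine input your argument omits. (In the special case $H = X$ that the paper actually uses downstream, the Morita leg degenerates and the unit is an honest $*$-homomorphism, so your shortcut is valid there --- but the proposition is stated for arbitrary open subgroupoids $H$.)

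Your fallback via the adjoint functor theorem also does not go through as stated: $\icKK^G$ is the Dwyer--Kan localization of \emph{separable} $G$-C$^*$-algebras and only admits countable colimits (see Appendix~\ref{sec:kkg-stable}), so it is not presentable and the $\infty$-categorical adjoint functor theorem does not apply in the form you invoke.
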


\begin{proof}
By~\cite{lur:ht}*{Proposition 5.2.2.8}, it is enough to show that we obtain a homotopy equivalence from the natural transform
\begin{equation}\label{eq:transfadj}
\map_{\icKK^G}(\Ind_H^G(-),-)\to \map_{\icKK^H}(\Res^G_H\Ind_H^G(-),\Res^G_H(-))\to \map_{\icKK^H}(-,\Res^G_H(-))
\end{equation}
between functors $(\icKK^H)^{\mathrm{op}}\times \icKK^G\to \Spec$, where the last arrow is induced by the unit of the adjunction on the corresponding homotopy categories:
\begin{equation}\label{eq:unitadj}
B \sim_{} \Ind_H^H(B)= C_0(H)\otimes_{H^{(0)}} B\rtimes H \hookrightarrow C_0(G|_{H^{0}})\otimes_{H^{(0)}} B\rtimes H= \Res^G_H\Ind_H^G(B).
\end{equation}
Note that~\eqref{eq:unitadj} is given by the composition of a Morita equivalence and a $\ast$-homomorphism (both of these are $H$-equivariant). Thus the unit map is represented by a (proper) $H$-correspondence from $B$ to $\Res^G_H\Ind_H^G(B)$. Then Proposition~\ref{prop:simplicially-enriched-model-of-atiyah-transfer} will give us a morphism in $\icKK^H$ representing the unit of the adjunction at the level of $\infty$-categories. It is worth pointing out that when $H=X$, which is the case of interest for us,~\eqref{eq:unitadj} reduces to a $\ast$-homomorphism, hence we do not need correspondences to represent the unit of the adjunction in $\icKK^X$.

Since both sides in~\eqref{eq:transfadj} are compatible with fiber sequences it is sufficient to show that the transformation is an isomorphism after applying $\pi_0$. This is proved in~\cite{val:kthpgrp}*{Theorem 2.3}.
\end{proof}

Given a morphism $f\in\KK^G(A,B)$, we say it is \emph{weakly contractible} if $\Res^G_H(f)=0$. The weakly contractible morphisms form a homological ideal $\cI_H$, and the weakly contractible objects (i.e., those whose identity morphism is weakly contractible) form a localizing triangulated subcategory of $\KK^G$, denoted $\cN_H$. We say an object $P\in \KK^G$ is \emph{projective} if $\cI_H(P,A)=0$ for all objects $A\in \KK^G$.

In~\cite{val:kthpgrp} (and specifically for the case $H=X$, in~\cite{py:part-one}), building on previous work by Meyer and Nest~\cite{nestmeyer:loc}, it is proved that $\cN_H$ is part of a \emph{pair of complementary subcategories} $(\cP_H, \cN_H)$, where $\cP_H= \langle \ran\Ind_H^G \rangle $ is the localizing triangulated subcategory generated by the image of $\Ind_H^G$. 

In fact, the same result holds if we consider a countable family of open subgroupoids $(H_i)_{i\in I}$, we have the functors
\begin{align*}
 F &= (\Res^G_{H_i})_{i\in I}\colon \KK^G\to \prod_{i\in I}\KK^H,&
 F^\dagger &= \bigoplus_{i\in I} \Ind_{H_i}^G\colon \prod_{i\in I}\KK^H\to \KK^G
\end{align*}
and corresponding subcategories $\cN_I = \ker F$, $\cP_I = \ran \langle F^\dagger \rangle$.

Following the case of locally compact groups due to Meyer and Nest~\cite{nestmeyer:loc}*{Theorem 4.7}, and its elaboration based on the $\Ind$-$\Res$-adjunction~\cite{mey:two}*{Theorem 7.3}, we have the following.

\begin{theorem}[\cite{val:kthpgrp}]\label{thm:cpair}
The pair $(\cP_I,\cN_I)$ is a pair of complementary subcategories of $\KK^G$. The projective objects are the retracts of direct sums of objects in $\ran F^\dagger$. Moreover, for any $A\in \KK^G$ there is a functorial triangle, unique up to isomorphism, of the form
\begin{equation*}%\label{eq:cpairtri}
P(A)\xrightarrow{D_A} A \to N(A) \to \Sigma P(A).
\end{equation*}
with $P(A)\in \cP_I$ and $N(A)\in \cN_I$.
\end{theorem}

Let us recall the (reduced) \emph{descent functor} $j_r^G\colon \CstAlg^G\to \CstAlg$ given by $A\mapsto A\rtimes_r G$ on objects. As usual it induces a functor between the respective $\icKK$-categories, $j^G_r\colon \icKK^G\to \icKK$. This functor was introduced at the level of $\KK^G$-theory in~\cite{lg:kk}*{Theorem 3.4}.

Let $\rK_*^{\mathrm{top}}(G; A)$ denote the domain of the assembly map $\mu^G_A$ for the \emph{Baum--Connes conjecture} on $(G,A)$.
In fact, $\rK_*(P(A) \rtimes_r G)$ is a model of this group, in the following sense.

\begin{theorem}[\cite{val:kthpgrp}*{Theorem 3.14}]\label{thm:bc}
There is a choice of countable subfamily $(H_i)_{i\in I}$ of the family of proper open subgroupoids of $G$ such that the corresponding triangle from Theorem~\ref{thm:cpair} satisfies:
\begin{itemize}
\item $\Res^G_H(D_A)$ is a $\KK^H$-equivalence for all proper open subgroupoids $H\subseteq G$.
\item There is a natural isomorphism $\phi^G_A\colon \rK_*^{\mathrm{top}}(G; A)\to \rK_*(P(A)\rtimes_r G)$ such that 
\[
\rK_*(j^G_r)(D_A)\circ \phi^A_G=\mu^A_G.
\]
\end{itemize}
\end{theorem}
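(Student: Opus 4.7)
The plan is to follow the Meyer--Nest strategy for locally compact groups, adapted to groupoids via the $\infty$-categorical induction-restriction adjunction of Proposition~\ref{prop:adj}, the cellular approximation of Theorem~\ref{thm:cpair}, and Tu's description of $K_*^{\mathrm{top}}(G,A)$ in terms of proper $G$-compact subsets of a classifying space for proper actions on $G$.

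First I would choose the subfamily $(H_i)_{i \in I}$ as a countable cofinal subfamily of the directed system of proper open subgroupoids of $G$ under inclusion, whose existence is guaranteed by second countability. For the first property, note that $N(A) \in N_I$ means $\Res^G_{H_i}(N(A)) \simeq 0$ in $\icKK^{H_i}$ for every $i$. Given any proper open $H \subseteq G$, choose $i$ with $H \subseteq H_i$; then
\[
\Res^G_H(N(A)) \simeq \Res^{H_i}_H \Res^G_{H_i}(N(A)) \simeq 0,
\]
so that restricting the triangle $P(A) \to A \to N(A) \to \Sigma P(A)$ to $H$ exhibits $\Res^G_H(D_A)$ as a $\KK^H$-equivalence.

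For the second property, I would identify $K_*(P(A) \rtimes_r G)$ with the Baum--Connes domain by realizing both as homotopy colimits of the same diagram. Tu's model presents $K_*^{\mathrm{top}}(G,A)$ as a colimit of $\KK^G_*(C_0(Y), A)$ over proper $G$-compact subsets $Y$ of a classifying space, and each such $Y$ is built from induced pieces of the form $G \times_{H_i} Z$. The adjunction yields
\[
\KK^G_*(C_0(G \times_{H_i} Z), A) \cong \KK^{H_i}_*(C_0(Z), \Res^G_{H_i} A) \cong K_*((C_0(Z) \otimes \Res^G_{H_i} A) \rtimes_r H_i),
\]
where the last step is the Green--Julg isomorphism for proper $H_i$. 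On the other hand, $P(A)$ is built as a homotopy colimit of objects of the form $\Ind_{H_i}^G(B)$, and applying $\jmath^G_r$ followed by $K_*$ produces the same colimit system, yielding $\phi^G_A$. Compatibility with $\mu^A_G$ is then automatic: $\mu^A_G$ is defined by the natural transformation induced by the comparison $C_0(Y) \to A$, which corresponds to $D_A$ under the cellular approximation.

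The main obstacle is verifying that the descent functor $\jmath^G_r$ is compatible with the particular homotopy colimits presenting $P(A)$. Descent does not preserve general colimits, but on objects $\Ind_{H_i}^G B$ the proper subgroupoids $H_i$ are amenable, so reduced and maximal crossed products agree and the descent functor behaves well enough on the generating class of $P_I$. Combined with the fact that $K$-theory commutes with inductive limits of C$^*$-algebras, this reduces the check to a term-wise verification using the Green--Julg computation above. The remaining subtlety is matching the face maps in the diagram presenting $P(A)$ with those coming from Tu's simplicial description of a classifying space for proper actions, which is largely a bookkeeping matter given the $\infty$-categorical framework developed in Appendix~\ref{sec:kkg-stable}.
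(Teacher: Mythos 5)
The paper does not prove this statement: it is imported verbatim from \cite{val:kthpgrp}*{Theorem 3.14} (Bönicke--Proietti), so there is no in-paper argument to compare yours against; what you have written is an attempt to reprove the cited result. Judged on its own terms, your sketch follows the right general strategy (Meyer--Nest localization plus Tu's model of $K_*^{\mathrm{top}}$), but it has two genuine gaps. First, your proof of the first bullet reduces everything to choosing $i$ with $H\subseteq H_i$, which presupposes that the partially ordered set of proper open subgroupoids of $G$ admits a countable subfamily dominating \emph{every} proper open subgroupoid under inclusion. This poset is not directed in general --- already for a discrete group, two finite subgroups can generate an infinite subgroup --- and for a genuinely topological groupoid there are uncountably many proper open subgroupoids (restrict to arbitrary open subsets of the unit space, for instance), so the existence of a countable dominating family is neither obvious nor established by second countability alone. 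The argument in the cited reference does not proceed by global containment; the passage from weak contractibility with respect to the chosen countable family to weak contractibility with respect to all proper open subgroupoids is a substantive step, not a two-line cofinality observation.

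Second, your identification of $K_*(P(A)\rtimes_r G)$ with $K_*^{\mathrm{top}}(G,A)$ hinges on the assertion that every proper $G$-compact piece $Y$ of a classifying space for proper actions ``is built from induced pieces of the form $G\times_{H_i} Z$.'' For étale groupoids there is no equivariant CW or slice decomposition available off the shelf, and showing that the categorical cellular approximation $P(A)$ computes the topological side of the assembly map is precisely the content of the theorem you are trying to prove; as written, this step assumes the hard part. The compatibility with $\mu^A_G$ is likewise not ``automatic'' --- it requires tracking the Dirac morphism through the comparison of the two colimit presentations. The first bullet's deduction that $\Res^G_H(D_A)$ is an equivalence \emph{once} one knows $\Res^G_H(N(A))\simeq 0$ is fine, and the remark that properness of the $H_i$ gives amenability (so reduced and full crossed products agree) is correct and relevant; but the two gaps above are exactly where the real work of \cite{val:kthpgrp} lies.
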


\begin{remark}\label{rem:indresX}
When $G$ has torsion-free isotropy groups, we can take the singleton family $\{X\}$ containing the unit space of $G$.
We will mainly work with this choice in the next section.
\end{remark}

\begin{definition}%\label{def:capp}
In the context of Theorem~\ref{thm:bc}, the morphism $D_A$ is called the \emph{Dirac morphism} and its domain $P(A)$ is called the \emph{cellular approximation}.
We say that $G$ \emph{satisfies the Baum--Connes conjecture} if the induced maps $\rK_i^{\mathrm{top}}(G; C_0(G^{(0)})) \to \rK_i(C^*_r G)$ for $A = C_0(G^{(0)})$ and $i = 0, 1$ are isomorphisms.
We say that $G$ \emph{satisfies the rational Baum--Connes conjecture} if the same holds after rationalization.
\end{definition}

\begin{remark}\label{rem:bc-amen}
The Baum--Connes conjecture holds for a large class of groupoids.
For example, based on work by Higson and Kasparov~\cite{higkas:bc}, Tu~\cites{tu:moy} has proved the conjecture for every locally compact, $\sigma$-compact, Hausdorff groupoid acting continuously and isometrically on a continuous field of affine Euclidean spaces (these groupoids are sometimes referred to as \emph{a-T-menable}, or with the \emph{Haagerup property}).
This class contains (topologically) amenable groupoids.
\end{remark}

The main advantage of working in $\icKK^G$ rather than $\KK^G$ is that we can give a more direct construction of the cellular approximation. To see this, we first introduce the notion of \emph{projective resolution}. Suppose $P_\bullet \to A$ is a chain complex where all the $P_i$'s are projective and the chain complex $F(P_\bullet)\to F(A)\to 0$ is split-exact. Then we say that $P_\bullet$ is a projective resolution of $A$ in $\KK^G$.

From Proposition~\ref{prop:adj} we also know that $F^\dagger$ is right adjoint to $F$.
By~\cite{MR3415698}, the composition $L=F^\dagger F$ is a comonad in the $\infty$-categorical sense, and $L^{\bullet+1} A \to A$ is an augmented simplicial object of $\icKK^G$ (in their language, the (strict) $2$-category $\underline{\smash{\mathrm{Adj}}}$ characterizing $\infty$-adjunction has $\Delta^\opo$ in the corner supporting this composition, hence we get an $\infty$-simplicial object by definition).

Let us set $\gr{L^{\bullet+1} A}=\colim_{\Delta^\opo} L^{\bullet+1} A \in \icKK^G$. We also refer to this colimit over the simplicial category as ``geometric realization'', see Section~\ref{subsec:georeal}.

\begin{theorem}\label{thm:cellapp-colim}
      For any separable C$^*$-algebra $A$ there is an equivalence $\gr{L^{\bullet+1} A}\cong P(A)$. Under this identification, the Dirac morphism $D_A$ corresponds to the natural map $\ell_A\colon \gr{L^{\bullet+1} A}\to A$.
\end{theorem}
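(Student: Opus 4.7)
My strategy is to exhibit $\ell_A\colon \gr{L^{\bullet+1}A} \to A$ as a $(P_I, N_I)$-decomposition triangle of $A$ and then invoke the uniqueness in Theorem~\ref{thm:cpair} (applied with $I = \{X\}$, so that $F = \Res^G_X$ and $F^\dagger = \Ind_X^G$; see Remark~\ref{rem:indresX}). Concretely, it suffices to verify two things: that $\gr{L^{\bullet+1}A}$ lies in $P_I$, and that the cofiber of $\ell_A$ is weakly contractible, i.e., lies in $N_I = \ker F$.

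The first verification is essentially formal. Each simplicial degree $L^{n+1}A = F^\dagger(F L^n A)$ sits in the essential image of $F^\dagger$ and is therefore projective in the sense of Theorem~\ref{thm:cpair}, so it belongs to $P_I$. Since $P_I$ is a localizing subcategory of the stable $\infty$-category $\icKK^G$, in particular closed under geometric realizations, we obtain $\gr{L^{\bullet+1}A} \in P_I$.

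The heart of the argument is the second verification. Applying $F$ to the augmented simplicial diagram produces
\[
F L^{\bullet+1}A = (FF^\dagger)^{\bullet+1} FA = T^{\bullet+1} FA \to FA
\]
in $\icKK^X$, where $T = FF^\dagger$ is the monad attached to the adjunction. This is the bar resolution of $FA$ with respect to $T$, and the unit $\eta\colon \mathrm{id} \to T$ supplies the standard extra degeneracies that upgrade it to a \emph{split} augmented simplicial object. In any $\infty$-category, a split augmented simplicial object is a colimit diagram (cf.\ Lurie, \emph{Higher Algebra}, Section~4.7.2, together with the coherent comonadic framework of~\cite{MR3415698} needed to produce the splitting $\infty$-coherently from the formal $\infty$-adjunction of Proposition~\ref{prop:adj}), so $\gr{T^{\bullet+1}FA} \simeq FA$. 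Because $F$ is exact between stable $\infty$-categories and continuous for filtered colimits (a standard property of $\KK$-theory for sequential inductive limits), the skeletal filtration of $\Delta^\opo$ shows $F$ preserves this geometric realization, and we deduce that $F(\ell_A)\colon F\gr{L^{\bullet+1}A} \to FA$ is an equivalence. Exactness of $F$ then gives $F\,\mathrm{cofib}(\ell_A) \simeq 0$, placing $\mathrm{cofib}(\ell_A)$ in $N_I$.

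Assembling the two steps, the cofiber sequence $\gr{L^{\bullet+1}A} \xrightarrow{\ell_A} A \to \mathrm{cofib}(\ell_A)$ is a $(P_I, N_I)$-triangle for $A$; by the uniqueness part of Theorem~\ref{thm:cpair} it is canonically isomorphic to the Dirac triangle, which yields $\gr{L^{\bullet+1}A} \simeq P(A)$ and, under this equivalence, $\ell_A \simeq D_A$, naturally in $A$. I anticipate the main technical obstacle to be the clean $\infty$-categorical justification of the splitting and the colimit computation: in a purely triangulated setting this would require wrestling with higher Toda brackets (as alluded to in connection with~\cite{bn:tot} in the introduction), whereas the homotopy-coherent comonad machinery of~\cite{MR3415698} neatly packages the split augmented simplicial object as the formal output of the already-established $\infty$-adjunction in $\icKK^G$.
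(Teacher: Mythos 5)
Your proof is correct, but it takes a genuinely different route from the paper's. The paper extracts, via the stable Dold--Kan correspondence, a chain complex of projectives from $L^{\bullet+1}A$, shows it is a projective resolution using the extra degeneracies of $FL^{\bullet+1}A \to FA$ (only at the level of the homotopy category), and then painstakingly matches the skeletal colimits $D_n$ to Meyer's phantom tower and cellular approximation tower via the diagram from \cite{bbd:fais}*{Proposition 1.1.11}, finally invoking \cite{mey:two}*{Proposition 3.18} to identify the colimit with $P(A)$ and the Dirac morphism. You instead verify directly that $\ell_A$ is a $(P_I,N_I)$-decomposition triangle and appeal to the uniqueness in Theorem~\ref{thm:cpair}; this is shorter and avoids all the tower bookkeeping, but it shifts the technical weight onto two points the paper's route sidesteps: (i) the $\infty$-coherent splitting of $T^{\bullet+1}FA \to FA$ (which \cite{MR3415698} does supply, whereas the paper only ever needs the $1$-categorical contractibility to get exactness of a chain complex in the homotopy category), and (ii) the preservation of geometric realizations by $F=\Res^G_X$. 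On point (ii) your stated justification is off: $\mathit{KK}$-theory is famously \emph{not} continuous for sequential inductive limits of C$^*$-algebras. What saves the argument is that the colimits in question are internal $\infty$-categorical ones: $\Res^G_X$ is exact and preserves countable coproducts (the underlying C$^*$-algebra of a $c_0$-direct sum is unchanged by restriction), and by the skeletal filtration together with Proposition~\ref{prop:indinfcolim} every geometric realization is built from countable coproducts, cofibers, and sequential colimits presented as mapping telescopes, all of which such a functor preserves. With that repair, your argument goes through, and the uniqueness clause of Theorem~\ref{thm:cpair} (uniqueness as a triangle over $A$) delivers both the equivalence $\gr{L^{\bullet+1}A}\cong P(A)$ and the identification of $\ell_A$ with $D_A$, which the paper instead extracts by hand from \cite{mey:two}*{Diagram 3.7}. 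What the paper's longer route buys is the explicit link to the phantom-tower formalism underlying the spectral sequence~\eqref{eq:ss}, which is reused elsewhere.
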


\begin{proof}
Recall that split triangles in any triangulated category are isomorphic to direct sum triangles. This property is sufficient to construct a Dold--Kan correspondence in $\icKK^G$, as observed in~\cite{lur:ha}*{Section 1.2.4}. Thus, from the simplicial object $L^{\bullet+1} A$ we extract a chain complex $(C_\bullet,\delta_\bullet^{C_\bullet})$, where each $C_n$ is a direct summand of $L^{n+1}A$, in particular it is projective. To show that $C_\bullet\to A$ is a resolution, let $\epsilon$ be the unit of the adjunction in Proposition~\ref{prop:adj}, and consider $f_{-1}=\epsilon F$, $f_n=\epsilon FL^{n+1}$. Then $FL^{\bullet+1} A\to FA$ together with the $f_i$'s is a (left) contractible augmented simplicial object, so that its associated chain complex $FC_\bullet\to FA\to 0$ is exact. Let us set $P^\prime_n=\Sigma^n C_n$ and $\delta^{P^\prime_\bullet}_n=\Sigma^n \delta^{C_\bullet}_n$. Then $(P^\prime_\bullet,\delta^{P^\prime_\bullet}_\bullet)$ is an \emph{odd} projective resolution, as in the \emph{phantom tower} constructed in~\cite{mey:two}*{Section 3.1}. 

Now define $D_n$ to be the colimit of the $n$-skeleton of $L^{\bullet+1} A$. Note $D_0=P_0=C_0= LA$. By~\cite{lur:ha}*{Remark 1.2.4.3}, we have an exact triangle
\begin{equation*}%\label{eq:D-n-P-n-triangle}
D_{n-1}\to D_{n}\xrightarrow{\sigma_n} P_{n}\xrightarrow{\kappa_{n}} \Sigma D_{n-1},
\end{equation*}
where the map $D_n\to D_{n+1}$ is induced by the skeletal inclusion, and $P^\prime_\bullet$ is canonically isomorphic to $P_\bullet$ as chain complexes. We also note we have maps $D_n\to A$ obtained by composing $\ell_A$ with the natural maps $\lambda_n\colon D_n\to \gr{L^{\bullet+1} A}$. Since $P(A)$ is defined through the phantom tower, we now need to relate the $D_n$'s to this construction. By~\cite{bbd:fais}*{Proposition 1.1.11}, we can obtain a diagram
\begin{center}
  
      \begin{tikzcd}
      D_{n-1} \arrow[d] \arrow[r] & D_{n} \arrow[d,"\ell_A\lambda_n"] \arrow[r] & P_{n} \arrow[d] \arrow[r] & \Sigma D_{n-1} \arrow[d] \\
      A \arrow[d] \arrow[r,"\id"]   & A \arrow[r] \arrow[d]   & 0    \arrow[d] \arrow[r] & \Sigma A \arrow[d] \\
      {N}_{n} \arrow[d] \arrow[r, dashed]  & {N}_{n+1} \arrow[r,dashed,"\varepsilon_n"] \arrow[d,"\gamma_{n+1}"]  & \Sigma P_n \arrow[d, "\id"] \arrow[r,dashed, "-\Sigma \pi_n"]  & \Sigma N_{n} \arrow[d,dashed,"\Sigma\gamma_n"]     \\
      \Sigma D_{n-1} \arrow[r]  & \Sigma D_{n} \arrow[r,"\Sigma \sigma_n"] & \Sigma P_{n}\arrow[r,"\Sigma\kappa_{n}"] & D_{n-1}               
      \end{tikzcd}
\end{center}
where every row and column is exact, and each square is commutative, except for the bottom right square which is anticommutative. We deduce the boundary map $P_{n+1}\to P_n$ coincides with $\varepsilon_n \pi_{n+1}$  and the objects $N_i$ form a phantom tower for $A$. Comparing the top row above with the triangle in~\cite{mey:two}*{equation~(3.3)} proves that the $D_n$'s form a \emph{cellular approximation tower} for $A$ (set $\tilde{A}_{n+1}=D_n$ in~\cite{mey:two}*{Definition 3.9}). Then by~\cite{mey:two}*{Proposition 3.18}, we have that the homotopy colimit in $\KK^G$ of the system $D_n\to D_{n+1}$ is isomorphic to $P(A)$. It is a standard fact that homotopy colimits in the homotopy category of a stable $\infty$-category are computed by $\infty$-colimits (see Proposition~\ref{prop:indinfcolim}). Therefore we have $\colim D_n \cong P(A)$, but we also have $\gr{L^{\bullet+1} A} \cong \colim D_n$ by construction (see~\cite{lur:ha}*{Remark 1.2.4.2}), giving us an equivalence $\gr{L^{\bullet+1} A}\cong P(A)$.

To identify $D_A$, note that the Dirac morphism is the map $\tilde{A}\to A$ in the vertical dashed triangle in~\cite{mey:two}*{diagram (3.7)}. This diagram is induced by the maps $\alpha_n\colon \tilde{A}_n\to A$ together with the codiagonal map $\nabla\colon \bigoplus A\to A$ in Meyer's notation. In our situation we have $\alpha_n=\ell_A\circ \lambda_n$, and $\nabla\bigoplus\alpha_n = \ell_A\circ\nabla\bigoplus\lambda_n$, from which we deduce that $\tilde{A}\to A$ is isomorphic to $\ell_A\colon \gr{L^{\bullet+1} A}\to A$.
\end{proof}

\begin{corollary}\label{cor:pgrdesc}
The map $j^G_r(D_A)$ is equivalent to the natural map $\ell_{A\rtimes_r G}\colon \gr{L^{\bullet+1} A\rtimes G}\to A\rtimes G$.
\end{corollary}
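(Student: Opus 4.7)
By Theorem~\ref{thm:cellapp-colim} we already know $D_A\simeq \ell_A$, so the plan is to identify $\jmath^G_r(\ell_A)$ with $\ell_{A\rtimes_r G}$. Since $\jmath^G_r\colon \icKK^G\to \icKK$ is an $\infty$-functor, it carries the simplicial object $L^{\bullet+1}A$ in $\icKK^G$ to the simplicial object $(L^{\bullet+1}A)\rtimes G$ in $\icKK$, and sends $\ell_A$ to a cone $\jmath^G_r(\gr{L^{\bullet+1}A})\to A\rtimes_r G$ over this new diagram. By the universal property of the geometric realization, this cone factors canonically as
\[
\jmath^G_r(\gr{L^{\bullet+1}A})\xrightarrow{\alpha} \gr{L^{\bullet+1}A\rtimes G}\xrightarrow{\ell_{A\rtimes_r G}} A\rtimes_r G.
\]
The corollary thus amounts to the assertion that $\alpha$ is an equivalence, i.e., that $\jmath^G_r$ preserves the geometric realization of $L^{\bullet+1}A$.

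To establish this, the plan is to follow the skeletal decomposition already used in the proof of Theorem~\ref{thm:cellapp-colim}. Setting $D_n=\colim_{\Delta^\opo_{\le n}}L^{\bullet+1}A$, we have the exact triangles $D_{n-1}\to D_n \to P_n\to \Sigma D_{n-1}$ in $\icKK^G$ together with $\gr{L^{\bullet+1}A}\simeq \colim_n D_n$. Because $\jmath^G_r$ is an exact functor of stable $\infty$-categories, it preserves the finite colimits defining each $D_n$, so $\jmath^G_r(D_n)$ is precisely the $n$-th skeletal colimit of $(L^{\bullet+1}A)\rtimes G$ in $\icKK$. It thus remains to verify that $\jmath^G_r(\colim_n D_n)\simeq \colim_n \jmath^G_r(D_n)$, that is, that $\jmath^G_r$ commutes with this sequential colimit.

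This commutation is the key step, and I expect it to be the main obstacle since $\jmath^G_r$ is not a priori a left adjoint. In a stable $\infty$-category, however, a sequential colimit is computed as the cofiber of $\id-\mathrm{shift}\colon \bigoplus_n X_n\to \bigoplus_n X_n$, so preservation of sequential colimits reduces to preservation of countable coproducts. For $\jmath^G_r$ this holds because the reduced crossed product commutes with countable C$^*$-direct sums, $(\bigoplus_i A_i)\rtimes_r G \cong \bigoplus_i(A_i\rtimes_r G)$, and countable coproducts in $\icKK^G$ and $\icKK$ are modeled by such C$^*$-algebraic direct sums. Equivalently, one can invoke $(\varinjlim A_n)\rtimes_r G\cong \varinjlim(A_n\rtimes_r G)$ for sequential inductive systems of $G$-C$^*$-algebras, combined with the standard mapping telescope model of sequential colimits in $\icKK^G$. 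Either route yields that $\alpha$ is an equivalence, after which $\jmath^G_r(\ell_A)\simeq \ell_{A\rtimes_r G}$ follows from the naturality of the augmentation, completing the argument.
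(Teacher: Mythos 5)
Your proposal is correct and follows the same route as the paper: the paper's proof is a one-liner applying Theorem~\ref{thm:cellapp-colim} and citing that $\jmath^G_r$ preserves colimits (\cite{bel:kkstable}*{Lemma 4.16}). The only difference is that you unpack that cited lemma yourself—via exactness for the finite skeletal colimits and the reduction of sequential colimits to countable coproducts, which matches Proposition~\ref{prop:indinfcolim} and the compatibility of reduced crossed products with $c_0$-direct sums—so your argument is a valid, self-contained version of the same proof.
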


\begin{proof}
      This follows from Theorem~\ref{thm:cellapp-colim} once we know that $j^G_r$ preserves countable colimits. Since $j^G_r$ preserves semisplit exact sequences, it preserves zero and fiber sequences, thus it preserves all finite colimits~\cite{lur:ha}*{Prop.~1.1.4.1}. We also know $j^G_r$ preserves countable direct sums (coproducts), thus~\cite{lur:ht}*{Corollary 4.4.2.7} implies it preserves all countable colimits.
\end{proof}

We will next see that, when $G$ has torsion-free stabilizers, the domain of the map in Corollary~\ref{cor:pgrdesc} is closely related to the Crainic--Moerdijk homology.

\begin{remark}
When $G$ has torsion elements in stabilizers, we can still apply the corepresented functor $\map_{\icKK}(\dC, -)$ to $\ell_{A\rtimes_r G}$, providing a fully homotopy-theoretic interpretation of the Baum--Connes conjecture for étale groupoids, analogous to the Davis--Lück presentation of the assembly map~\cite{dl:bc}.
Of course, in this case the resulting homology groups are analogues of Bredon homology.
\end{remark}

\section{Generalized Chern character for ample groupoids}\label{sec:grchern}

In addition to the assumptions on $G$ in the previous section, let us also assume that it is ample.
Then $(G^{(n+1)})^\infty_{n=0}$ is a simplicial system of totally disconnected $G$-spaces, thus it gives a simplicial object of $\Span_{\pret}(\TDLC^G)$.
The associated quotient object $(G\backslash G^{(n+1)})^\infty_{n=0}$ is isomorphic to $(G^{(n)})^\infty_{n=0}$, a simplicial object in $\Span_{\pret}(\TDLC)$. 

We have seen in~\eqref{eq:fe} that for any topological spectrum $\bE$ we obtain a simplicial object $F^\bE(G^{(\bullet)})$  in $\TopSpec$. It will be convenient to write $G^\bullet \bE$ in place $F^\bE(G^{(\bullet)})$ to avoid clutter.
For a simplicial  spectrum $\bA^\bullet$ we will write $\gr{\bA^\bullet}$ to denote its realization, i.e., the $\infty$-colimit $\colim_{\Delta^{\text{op}}}\bA^\bullet$. 

Thanks to the work carried out in Section~\ref{sec:span-tot-disc} and~\ref{sec:k-theory-transfer}, now we can define the Chern character of $G$ at the level of simplicial spectra. 

\begin{definition}\label{def:mainch}
The \emph{Chern character} for the ample groupoid $G$ is defined as the map 
\[
G^{\bullet}\KU\to \bigvee_{n\in\dZ} \Sigma^{2n} \bH C_c(G^{(\bullet)},\dQ)
\]
between simplicial objects in $\TopSpec$, obtained as the following composition
\[
G^{\bullet}\KU\to G^{\bullet}\KU\wedge \bH\dQ \xrightarrow{\cong} G^{\bullet}(\KU\wedge \bH\dQ)\xrightarrow{\cong} G^{\bullet}(\bH\dQ[u,u^{-1}])\xrightarrow{\cong} \bigvee_{n\in\dZ} \Sigma^{2n} \bH C_c(G^{(\bullet)},\dQ),
\]
where the first map is the standard inclusion $X\to X\wedge Y$, and the following equivalences are due to~\eqref{eq:feid}, the Chern--Dold equivalence in~\eqref{eq:chern-equiv}, and Proposition~\ref{prop:final-transfer}, in order from left to right.
\end{definition}

Definition~\ref{def:mainch} is the Chern character of $G$ ``defined at the level of spectra'' that was promised in Theorem~\ref{thm:a-def-ch}.
Notice that we do not need $G$ to be torsion-free to make this definition.

\begin{proposition}\label{prop:s-hom}
We have the following isomorphism of abelian groups:
\[
\pi_*(\gr{G^{\bullet}\KU})\otimes \dQ \cong \pi_*(\gr{G^{\bullet}\KU}\wedge \bH\dQ)\cong \dH_*(G; \rK_\bullet(\dC)\otimes \dQ).
\]
\end{proposition}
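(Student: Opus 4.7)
The plan is to reduce the second isomorphism to a Dold--Kan calculation after rationalizing via the Chern character, and then to match the face operators of the resulting simplicial chain complex with those of~\eqref{eq:ghcc} by means of the Span-category classification from Section~\ref{sec:span-tot-disc}.

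The first isomorphism is immediate from~\eqref{eq:homotop-grp-rationalization} applied to $E = \gr{G^{\bullet}\KU}$. For the second, I would use that smash product with $H\dQ$ is exact on the stable $\infty$-category $\Spec$ and therefore commutes with the geometric realization $\gr{-} = \colim_{\Delta^\opo}$. Applying the Chern character of Definition~\ref{def:mainch} termwise and interchanging the wedge with the colimit (both are colimits, hence commute) yields
\[
\gr{G^{\bullet}\KU}\wedge H\dQ \cong \bigvee_{n\in\dZ}\Sigma^{2n}\gr{HC_c(G^{(\bullet)},\dQ)}.
\]
Taking $\pi_k$ and using that homotopy groups of a wedge of spectra split as a direct sum reduces the task to computing $\pi_k(\gr{HC_c(G^{(\bullet)},\dQ)})$.

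For this, I would invoke Dold--Kan: for any simplicial abelian group $A_\bullet$, the $\infty$-colimit in $\Spec$ of the simplicial Eilenberg--MacLane spectrum $HA_\bullet$ has $\pi_k$ equal to the $k$-th homology of the Moore complex $\bigl(A_\bullet,\sum_i(-1)^i d_i\bigr)$. Specialized here, this identifies $\pi_k(\gr{HC_c(G^{(\bullet)},\dQ)})$ with $H_k$ of the chain complex whose $n$-th term is $C_c(G^{(n)},\dQ)$ and whose differential is the alternating sum of pushforwards along the nerve face maps.

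The delicate step, which I expect to be the main technical obstacle, is to verify that the face operators of the simplicial abelian group $[n]\mapsto C_c(G^{(n)},\dQ)$ arising from our $\infty$-categorical construction genuinely coincide with the pushforwards $(d^n_i)_*$ appearing in~\eqref{eq:ghcc}. Both constructions arise by applying a product-preserving, compactly supported $\infty$-functor $\Span_{\pret}(\TDLC)\to\Spec$ to the simplicial object $(G^{(n)})_n$ of $\Span_{\pret}(\TDLC)$, and both take the value $H\dQ$ on the singleton space. Theorem~\ref{thm:additive-infty-ftrs-on-tdlc} combined with Proposition~\ref{prop:comp-supp-cont-functors-as-Lan} guarantees that such functors are determined up to contractible choice by their value at a point, which supplies the required matching. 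Combining the above with the identification $K_\bullet(\dC)\otimes\dQ \cong \dQ[u,u^{-1}]$ with $|u|=2$ gives
\[
\pi_k(\gr{G^{\bullet}\KU}\wedge H\dQ) \cong \bigoplus_{n\in\dZ}H_{k-2n}(G,\dQ) = \dH_k(G,K_\bullet(\dC)\otimes\dQ),
\]
which is the desired second isomorphism.
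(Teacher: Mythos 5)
Your proposal is correct and follows essentially the same route as the paper: commute the smash/wedge with the geometric realization, reduce to $\pi_*(\gr{HC_c(G^{(\bullet)},\dQ)})$, and identify the face maps via the classification of functors on $\Span_{\pret}(\TDLC)$ (the paper does this by citing Proposition~\ref{prop:final-transfer}, which rests on the same Theorem~\ref{thm:additive-infty-ftrs-on-tdlc} and Proposition~\ref{prop:comp-supp-cont-functors-as-Lan} you invoke). The only cosmetic difference is that the paper computes $\pi_*$ of the realization of the simplicial Eilenberg--MacLane spectrum via the convergent homotopy-colimit spectral sequence, concentrated in the zeroth row, rather than by quoting Dold--Kan directly; the two are interchangeable here.
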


\begin{proof}
Since the smash product preserves colimits, from Definition~\ref{def:mainch} we have 
\begin{equation}\label{eq:hgch}
\gr{G^{\bullet}\KU}\wedge \bH\dQ \cong \gr{G^{\bullet}\KU\wedge \bH\dQ}\cong \bigvee_{n\in\dZ} \Sigma^{2n}\gr{ \bH C_c(G^{(\bullet)},\dQ)}.
\end{equation}
Now the claim follows from $\pi_*\gr{\bH C_c(G^{(\bullet)},\dQ)}\cong \rH_*(C_c(G^{(\bullet)},\dQ))$.
Conceptually, this follows from the fact that $H$ implements an equivalence (of stable $\infty$-categories) between rational chain complexes and $\bH\dQ$-module spectra, which interchanges homotopy groups and homology groups, and is such that the realization of a simplicial $\bH\dQ$-module is modeled by the direct sum total complex construction. 

A more elementary approach is to consider the rightmost realization in~\eqref{eq:hgch}, and note that the standard spectral sequence associated to the homotopy colimit converges (see~\cite{lur:ha}*{Variant 1.2.4.9 and Proposition 1.4.3.6}). The associated $E_1$-page reads $\pi_q(\bH C_c(G^{(p)},\dQ))$. From the properties of Eilenberg--MacLane spectra, this page is concentrated in the $0$-th row, so the homotopy groups of $\gr{\bH C_c(G^{\bullet},\dQ)}$ are isomorphic to the homology groups of the chain complex $\pi_0(\bH C_c(G^{\bullet},\dQ))\cong C_c(G^{\bullet},\dQ)$.
The boundary maps are identified in Proposition~\ref{prop:final-transfer}, thus comparing with the description in~\eqref{eq:ghcc} gives the claim.
\end{proof}

Following up on Remark~\ref{rem:indresX}, we now consider the simplicial object $L^{\bullet+1} A$ obtained by choosing $F=\Res^G_X$ and $F^\dagger=\Ind_X^G$ (recall $X=G^{(0)}$ is the unit space of $G$).

\begin{proposition}\label{prop:id-simp-obj}
There is an equivalence of simplicial objects between $\map_{\icKK}(\dC,L^{\bullet+1} C_0(X)\rtimes G)$ and $G^{\bullet} \KU$ (after levelwise geometric realization). In particular, taking the colimit over $\Delta^{\text{op}}$, we have an equivalence
\[
\gr{\map_{\icKK}(\dC,L^{\bullet+1} C_0(X)\rtimes G)} \cong \gr{G^{\bullet}\KU}.
\]
\end{proposition}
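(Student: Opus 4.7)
The plan is to exhibit both sides of the claimed equivalence as the images of the standard simplicial object $(G^{(\bullet)})$ in $\Span_{\pret}(\TDLC)$ under two a priori different $\infty$-functors $\Span_{\pret}(\TDLC) \to \TopSpec$, and then to identify these two functors using the classification established in Section~\ref{sec:span-tot-disc}.

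The first step is to unpack the comonad $L = \Ind_X^G \circ \Res_X^G$. Since $\Ind_X^G(C_0(Y)) \cong C_0(G \times_{s, X} Y)$ for any $X$-algebra $C_0(Y)$, iteration yields an identification $L^{n+1} C_0(X) \cong C_0(G^{(n+1)})$ of $G$-C$^*$-algebras, with the $G$-action by left translation on the first coordinate. The simplicial face maps $d_i = L^i \epsilon L^{n-i} \colon L^{n+1} \to L^n$ and degeneracies $s_i = L^i \delta L^{n-i} \colon L^{n+1} \to L^{n+2}$ correspond, under this identification, to the standard face maps (composition of adjacent arrows and deletion of the outer ones) and degeneracies (insertion of units) of the nerve; each is $G$-equivariant and realized by a span in $\Span_{\pret}(\TDLC^G)$. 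Consequently, the simplicial object $L^{\bullet+1} C_0(X)$ in $\icKK^G$ agrees with the image of $(G^{(\bullet+1)})$ under the composite
\[
\Span_{\pret}(\TDLC^G) \xrightarrow{E^G} \PrCorr^G \xrightarrow{T^G_\KK} \icKK^G.
\]

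The second step is to apply the descent $\infty$-functor $(-)\rtimes G \colon \icKK^G \to \icKK$. By Proposition~\ref{prop:span-corr-comm}, the composites $(E^G(-))\rtimes G$ and $E(G\backslash -)$ agree as $\infty$-functors $\Span_{\pret}(\TDLC^G) \to \icKK$. Evaluating at the simplicial $G$-space $(G^{(\bullet+1)})$, whose $G$-quotient is the nerve $(G^{(\bullet)})$, we obtain an equivalence of simplicial objects in $\icKK$:
\[
L^{\bullet+1} C_0(X) \rtimes G \;\simeq\; C_0(G^{(\bullet)}).
\]
The third step is to apply $\dK$. This produces two $\infty$-functors $\Span_{\pret}(\TDLC) \to \TopSpec$: the functor $\Phi$ of Section~\ref{sec:k-theory-transfer}, sending $Y$ to $\dK(C_0(Y))$, and the functor $Y \mapsto \Map_c(Y, \KU)$ of Proposition~\ref{prop:trans-fun-spec}. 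By Corollary~\ref{cor:Phi-has-prop-cc}, Proposition~\ref{prop:trans-fun-spec}, and Proposition~\ref{prop:comp-supp-cont-functors-as-Lan}, both lie in $\Fun^{\times,cc}(\Span_{\pret}(\TDLC), \TopSpec)$ and are therefore left Kan extensions of their restrictions to $\Span_{\pret}(\TDC)$. By Theorem~\ref{thm:additive-infty-ftrs-on-tdlc} such functors are classified by their value on the singleton, which by Corollary~\ref{cor:zz-ku-dekm} is $\KU$ on both sides. Hence $\Phi$ and $\Map_c(-, \KU)$ are naturally equivalent, and evaluating at $(G^{(\bullet)})$ yields the degreewise equivalence $\dK(L^{\bullet+1} C_0(X)\rtimes G) \simeq G^\bullet \KU$. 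Passing to the $\infty$-colimit over $\Delta^\opo$ gives the statement about geometric realizations.

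The main obstacle is the first step. While the identification $L^{n+1} C_0(X) \cong C_0(G^{(n+1)})$ at the level of objects of $\icKK^G$ is immediate from the definitions, checking that the comonadic simplicial structure coincides with the one coming from the nerve requires an explicit description of the counit of $\Ind_X^G \dashv \Res_X^G$ (Proposition~\ref{prop:adj}) as a $G$-equivariant correspondence associated to a concrete span, and the tracking of these descriptions through the $\infty$-categorical enhancement of Proposition~\ref{prop:simplicially-enriched-model-of-atiyah-transfer}. Once the identification is pinned down in each simplicial degree, the full simplicial coherence follows automatically from the uniqueness of the $\infty$-adjunction and the resulting comonad resolution.
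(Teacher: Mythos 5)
Your proposal is correct and follows essentially the same route as the paper's own proof: identify $L^{\bullet+1}C_0(X)$ with $C_0(G^{(\bullet+1)})$ by describing the counit of $\Ind_X^G\dashv\Res_X^G$ as the correspondence of the span associated to $s\colon G\to X$, descend via Proposition~\ref{prop:span-corr-comm} to get $C_0(G^{(\bullet)})$, and then use the classification of functors in $\Fun^{\times,cc}(\Span_{\pret}(\TDLC),\TopSpec)$ together with Corollary~\ref{cor:zz-ku-dekm} to match $\dK(C_0(-))$ with $\Map_c(-,\KU)$ including the simplicial face maps. The step you flag as the main obstacle is exactly the one the paper spends most of its proof on, and it resolves it in the way you describe.
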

\begin{proof}
There is a $G$-equivariant $\ast$-isomorphism $L^{\bullet+1} C_0(X)\cong C_0(G^{(\bullet+1)})$. The counit of the adjunction $\Ind_X^G\dashv \Res^G_X$ gives the augmentation map $\epsilon_X\colon C_0(G)\to C_0(X)$. All the face maps in $C_0(G^{(\bullet+1)})$ are obtained from $\epsilon$, for example the two maps $C_0(G^{(2)})\to C_0(G)$ are given by $\epsilon_{C_0(G)}$ and $L(\epsilon_X)$. 

Following the definition of the counit given in~\cite{val:kthpgrp}*{Theorem 2.3}, the augmentation map $\epsilon_X$ is described as
\[
C_0(G)\hookrightarrow C_0(G)\rtimes G \sim C_0(X),
\]
i.e., it is given by a $\ast$-homomorphism (inclusion) followed by a Morita equivalence.
Comparing with the discussion below Lemma~\ref{lem:grcorr-comp}, we see that $\epsilon_X$ is represented by the (equivariant) correspondence  $E^G_s=E(\Omega^G_s)$, where $\Omega^G_s$ is the groupoid correspondence associated to the local homeomorphism $s\colon G\to X$.

To be precise, we have that $T_\icKK^G(E^G_s)$ (using Proposition~\ref{prop:simplicially-enriched-model-of-atiyah-transfer}) is a lift of $\epsilon_X\in \KK^G(C_0(G),C_0(X))$ to $\icKK^G$. Then it follows that all the other face maps are represented by $E^G_{\partial^i_n}$, where $\partial^i_n$ is the $G$-equivariant $i$-th face map $G^{(n+1)}\to G^{(n)}$, $i=0,\dots,n$. By applying Proposition~\ref{prop:span-corr-comm}, we see that $L^{\bullet+1} C_0(X)\rtimes G$ is isomorphic (in $\icKK$) to $C_0(G^{(\bullet)})$ with face maps $T_\icKK(E_{d^i_n})$, where $d^i_n\colon G^{(n)}\to G^{(n-1)}$ is the standard face map, $i=0,\dots, n$. Again, to be precise, there is a natural isomorphism $T_\icKK(E^G_{\partial^i_n}\rtimes G)\cong  j_r^G T_\icKK^G(E^G_{\partial^i_n})$, and we are applying Proposition~\ref{prop:span-corr-comm} to $E^G_{\partial^i_n}\rtimes G$ to deduce the isomorphism of simplicial objects 
\[
L^{\bullet+1} C_0(X)\rtimes G\cong T_\icKK(E(G\backslash G^{(\bullet+1)}))\cong T_\icKK(E(G^{(\bullet)})).
\]
The claim follows from~\eqref{eq:feid} because by definition $\Phi(G^{(\bullet)})= \map_{\icKK}(\dC,T_\icKK(E(G^{(\bullet)})))$.
\end{proof}

\begin{proposition}%\label{prop:khom-ku}
Suppose $G$ is a second-countable, locally compact, Hausdorff, ample groupoid. Assume $G$ is torsion-free. The following isomorphism holds:
\[
\rK_*^{\mathrm{top}}(G; C_0(X))\cong  \pi_*(\gr{G^{\bullet}\KU}).
\]
\end{proposition}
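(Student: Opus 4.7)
My plan is to chain together the identifications already assembled in the paper, reading the left side of the Baum--Connes conjecture through its cellular-approximation model, and then moving across the crossed-product and $K$-theory functors into the simplicial world of $G^\bullet \KU$.

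The starting point is Theorem~\ref{thm:bc}, together with Remark~\ref{rem:indresX}: since $G$ is torsion-free, the singleton family $\{X\}$ is an admissible choice, so we have a natural isomorphism
\[
K_*^{\mathrm{top}}(G,C_0(X)) \;\cong\; K_*(P(C_0(X)) \rtimes_r G),
\]
where $P(C_0(X))$ is the cellular approximation built from the adjunction $(\Ind_X^G, \Res^G_X)$. Next I invoke Theorem~\ref{thm:cellapp-colim} to identify $P(C_0(X))$ with the geometric realization $\gr{L^{\bullet+1} C_0(X)}$ in the stable $\infty$-category $\icKK^G$. The descent functor $\jmath_r^G \colon \icKK^G \to \icKK$ preserves $\infty$-colimits (as noted in the corollary to Theorem~\ref{thm:cellapp-colim}), so after crossed product we obtain an equivalence
\[
P(C_0(X)) \rtimes_r G \;\cong\; \gr{L^{\bullet+1} C_0(X) \rtimes_r G}
\]
in $\icKK$.

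Now I pass to spectra via $\dK \cong \gr{\map_\icKK(\dC,-)}$ from Proposition~\ref{prop:eqkickk}. The key step is that this functor commutes with the geometric realization in question. Since $\icKK$ is stable, $\map_\icKK(\dC,-)$ preserves finite colimits, so it is compatible with each partial realization $\gr{-}_{\le n}$. The full geometric realization is the sequential colimit of these partial realizations, and sequential colimits are preserved by $\map_\icKK(\dC,-)$ because $\dC$ is compact in $\icKK$ (equivalently, $K_\ast$ commutes with inductive limits of C$^*$-algebras, cf.~the lemma preceding Corollary~\ref{cor:Phi-has-prop-cc}). Combining the two we get
\[
\dK(P(C_0(X)) \rtimes_r G) \;\cong\; \gr{\dK(L^{\bullet+1} C_0(X) \rtimes_r G)}
\]
in $\TopSpec$. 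Applying $\pi_*$ and using Proposition~\ref{prop:id-simp-obj} to replace the simplicial spectrum $\dK(L^{\bullet+1} C_0(X) \rtimes_r G)$ by $G^{\bullet} \KU$ finishes the argument.

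The main thing to verify carefully is the commutation of $\dK$ with the simplicial $\infty$-colimit: once this is granted, the rest is a bookkeeping chain of the named isomorphisms. One could also bypass the compactness argument by working with Meyer's telescope model $D_n$ from the proof of Theorem~\ref{thm:cellapp-colim}, noting that $P(C_0(X)) \rtimes_r G$ is a sequential hocolim of the $D_n \rtimes_r G$, and that $K_\ast$ of such a telescope is the direct limit of $K_\ast$ of the stages; then match this telescope with the skeletal filtration of $\gr{G^{\bullet} \KU}$ on the spectrum side.
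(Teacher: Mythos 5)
Your proposal is correct and follows essentially the same chain of identifications as the paper's proof: Theorem~\ref{thm:bc} with Remark~\ref{rem:indresX}, the colimit presentation from Theorem~\ref{thm:cellapp-colim} together with descent preserving colimits, commutation of $\map_{\icKK}(\dC,-)$ with the simplicial colimit, and finally Proposition~\ref{prop:eqkickk} and Proposition~\ref{prop:id-simp-obj}. Your justification of the colimit-commutation via exactness plus compactness of $\dC$ is a slightly more detailed account of a step the paper asserts without elaboration, but it is the same argument.
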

\begin{proof}
Recall from Theorem~\ref{thm:cellapp-colim} that $P(C_0(X))\rtimes G$ is isomorphic to $\colim_{\Delta^{\mathrm{op}}} {L^{\bullet+1} C_0(X)\rtimes G}$ in $\icKK$.
Since $\map_{\icKK}(\dC,-)$ commutes with colimits, we have
\[
\map_{\icKK}(\dC,P(C_0(X))\rtimes G)\cong \colim_{\Delta^{\mathrm{op}}}\map_{\icKK}(\dC,L^{\bullet+1} C_0(X)\rtimes G).
\]
Now Theorem~\ref{thm:bc} and Proposition~\ref{prop:id-simp-obj} imply the claim.
\end{proof}

Combining the isomorphisms from the above propositions, we now obtain Theorem~\ref{thm:a-def-ch}.

\begin{theorem}\label{thm:ch-ab}
Suppose $G$ is a second-countable, locally compact, Hausdorff, ample groupoid. Assume $G$ is torsion-free.
The Chern character of $G$ from Definition~\ref{def:mainch} induces the following morphism of abelian groups, which is an isomorphism after rationalizaton,
\begin{equation*}%\label{eq:chdef}
\rK_i^{\mathrm{top}}(G; C_0(X)) \to \bigoplus_{k\in \dN} \rH_{i+2k}(G; \dQ) \quad (i = 0, 1).
\end{equation*}
\end{theorem}

The rational Baum--Connes conjecture allows us to replace the domain with the K-groups of $C^*_r G$, and we obtain Theorem~\ref{thm:main-thm}.

\begin{corollary}\label{cor:hk-iso}
If in addition $G$ satisfies the rational Baum--Connes conjecture, then the rational HK conjecture for $G$ holds, i.e., we have the following isomorphism of abelian groups:
\begin{equation*}%\label{eq:rat-k-group-to-rat-grpd-homology}
\rK_i(C^*_r G)\otimes \dQ \cong \bigoplus_{k\in \dN} \rH_{i+2k}(G; \dQ) \quad (i = 0, 1).
\end{equation*}
\end{corollary}

\section{Applications and further discussion}\label{sec:appl}

In this section we present other applications of our main results and discuss related settings on homology and $\rK$-theory of groupoids that are closely related but not exactly covered by this paper.

\subsection{Integral HK conjecture}

Theorem~\ref{thm:main-thm} proves a modified form of the original HK conjecture formulated by Matui in~\cite{matui:hkpub}*{Conjecture 2.6} (and implicitly in his earlier work~\cite{matui:hk}*{Section 3}). The formulation in Theorem~\ref{thm:main-thm} appeared in print in~\cite{bdgw:matui}*{Remark 5.12}, however it has been known to experts in the field since shortly after the appearance of~\cite{matui:hkpub}. 

Matui's original formulation asks if the following isomorphism holds:
\[
\rK_i(C^*_r G) \cong \bigoplus_{k \in \dN} \rH_{i + 2 k}(G; \dZ) \qquad (i = 0, 1),
\]
without specifying a particular implementing morphism. This conjecture is different from Theorem~\ref{thm:main-thm} in a number of ways, which we now comment on.

First of all, the original conjecture prescribed an \emph{integral isomorphism} (i.e., an isomorphism \emph{before} rationalization).
This may be explained in view of the well-known fact that the Chern character of totally disconnected spaces is an integral isomorphism (note that groupoid homology specializes to negatively-graded compactly supported cohomology when calculated on a space, viewed as a trivial groupoid, see~\cite{cramo:hom}*{Section 3.5}).
However, Deeley has shown that the integral version of Theorem~\ref{thm:main-thm} does not hold, by providing a counterexample groupoid $G$ that is also \emph{principal}, i.e., an equivalence relation as a groupoid~\cite{deeley:counthk}.

Matui's original formulation also imposed a topological condition on isotropy, namely that the interior of the isotropy subgroupoid of $G$ equals $G^{(0)}$; a groupoid of this form is called \emph{essentially principal}. Scarparo has produced a counterexample to the original HK conjecture which is essentially principal, but not torsion-free~\cite{scarparo:homod}.

Overall we can say that the failure of Matui's HK conjecture is tied to torsion phenomena in $\rK$-groups, or in the isotropy groups of the groupoid itself.

It should be noted that Matui's original motivation, that is the study of topological dynamical systems and their invariants, is one where virtually all groupoids involved happen to be amenable, in particular they satisfy the Baum--Connes conjecture (see Remark~\ref{rem:bc-amen}).
This may account for why this condition was overlooked in Matui's original formulation.

The context of topological dynamical systems is also likely the explanation why the original conjecture imposed \emph{minimality} on $G$, i.e., that $G$-orbits be dense, as interesting dynamical systems often exhibit this behavior. Minimality is not relevant for Theorem~\ref{thm:main-thm}.

\subsection{HK conjecture with coefficients and groupoid models for classifiable C$^*$-algebras}\label{sec:hk-coeff}

Suppose $G$ satisfies the assumptions of Theorem~\ref{thm:main-thm}, and let $B$ be a $G$-C$^*$-algebra.
In~\cite{py:part-one}, we have proved that the $\rK$-groups of $B$ are $\dZ G$-modules, and that they appear in a convergent spectral sequence as follows:
\[
E^2_{pq}=\rH_p(G; \rK_q(B))\Rightarrow \rK_{p+q}(B\rtimes_r G).
\]
In our forthcoming work~\cite{py:upcoming}, we prove the corresponding generalization of Theorem~\ref{thm:main-thm}:
\begin{equation}\label{eq:hk-coeff}
\rK_i(B\rtimes_r G)\otimes \dQ \cong \dH_i(G; \rK_\bullet(B)\otimes \dQ).
\end{equation}

The case where $B$ is the C$^*$-algebra of compact operators is of particular interest for the following reason. Recall that a \emph{twist} $\Sigma$ over a topological groupoid $G$ is a central groupoid extension
\[
G^{(0)} \times \dT \longrightarrow \Sigma \longrightarrow G.
\]
To this data one can associate a \emph{twisted} groupoid C$^*$-algebra denoted $C^*_r(G,\Sigma)$.

If $A$ is a separable C$^*$-algebra with a Cartan subalgebra, then it is isomorphic to $C^*_r(G,\Sigma)$ for a second countable, locally compact, Hausdorff, étale groupoid $G$ with twist $\Sigma$~\cite{ren:cartan}*{Theorem 5.6}. In addition, $G$ is principal (thus torsion-free) if and only if the Cartan subalgebra has the unique extension property. We also know that $G$ is amenable (thus it satisfies the Baum--Connes conjecture) if and only if $A$ is nuclear~\cite{tak:fell}*{Theorem 5.4}. A generalization of the Packer--Raeburn stabilization trick~\cite{willerp:parae} shows that there exists a $C_0(G^{(0)})$-module $E$ and a groupoid dynamical system $(\cK(E), G, \alpha)$ such that $\cK(E)\rtimes_{r,\alpha} G$ is Morita equivalent to $C^*_r(G,\Sigma)$. Choosing the C$^*$-algebra of compact operators in place of $B$ in~\eqref{eq:hk-coeff}, this leads to the computation of $\rK_i(C^*_r(G,\Sigma)) \otimes \dQ$ in terms of hyperhomology as above.

\smallskip

Let us discuss how to compute $K_*(A)\otimes \dQ$ from the homology of $G$ when $A$ is classifiable. Recall $A$ is either stably finite or purely infinite. In the former case, $A$ admits a trivially twisted groupoid model if $\rK_0(A)$ is torsion-free~\cite{li:diag}*{Corollary 1.8}; in the latter case, $A$ is a Kirchberg algebra and we can choose a graph-based model which in particular is ample with trivial twist~\cites{cfh:kirchample,spiel:kirchgraph}.
This groupoid model is essentially principal, but we cannot exclude torsion, so Theorem~\ref{thm:main-thm} applies only to Kirchberg algebras (with UCT) admitting a torsion-free model. 

When $A$ is stably finite the groupoid model can always be chosen principal~\cite{li:diag}*{Theorem 1.2}, so we can apply Theorem~\ref{thm:main-thm} when $\rK_0(A)$ is torsion-free and the model is ample. The condition that $G$ is ample excludes projectionless C$^*$-algebras, such as the Jiang--Su algebra.

\subsection{Smale spaces and Putnam's homology}%\label{subsec:smale}

Inspired by the theory of basic sets of Axiom A diffeomorphisms, Ruelle has introduced a class of topological hyperbolic dynamical systems called \emph{Smale spaces}~\cite{ruelle:thermo}. Based on ideas by Bowen~\cite{bow:mA}, Putnam has introduced a homology theory for Smale spaces~\cite{put:HoSmale}. He has also studied groupoids and C$^*$-algebras associated to these systems in~\cites{put:algSmale,put:funct}.

For a non-wandering Smale space $(X,f)$ with totally disconnected stable sets, its unstable groupoid $R^u(X,f)$ can be chosen to be an ample groupoid satisfying the hypotheses of Theorem~\ref{thm:main-thm}. Let us denote Putnam's stable homology groups by $\rH^s_*(X,f)$. Let $U(X,f)=C^*(R^u(X,f))$ denote the unstable C$^*$-algebra associated to the Smale space $(X,f)$. 

\begin{corollary}\label{cor:hk-iso-smale}
Let $(X,f)$ be a non-wandering Smale space with totally disconnected stable sets. The following isomorphism holds:
\[
\rK_i(U(X,f))\otimes \dQ \cong \bigoplus_{k\in \dN} \rH^s_{i+2k}(X,f)\otimes \dQ \quad (i = 0, 1).
\]
\end{corollary}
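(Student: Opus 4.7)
The plan is to apply Theorem~\ref{thm:hk-iso} to the groupoid $G = R^u(X,f)$, whose reduced C$^*$-algebra is $U(X,f)$. By assumption, $R^u(X,f)$ can be realized as a second countable, locally compact, Hausdorff, ample groupoid; as a principal equivalence relation, it is automatically torsion-free. Moreover, Putnam's unstable groupoid is amenable (see~\cite{put:algSmale}), so by Tu's theorem (Remark~\ref{rem:bc-amen}) it satisfies the Baum--Connes conjecture, and in particular the full and reduced groupoid C$^*$-algebras agree: $U(X,f) = C^*(R^u(X,f)) \cong C^*_r(R^u(X,f))$. Theorem~\ref{thm:hk-iso} then yields
\[
K_*(U(X,f)) \otimes \dQ \cong \bigoplus_{k\in\dZ} H_{*+2k}(R^u(X,f),\dQ).
\]

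The remaining step is to identify the ample-groupoid homology of $R^u(X,f)$ with Putnam's stable homology. More precisely, one needs the rational isomorphism
\[
H_*(R^u(X,f),\dQ) \cong H^s_*(X,f) \otimes \dQ.
\]
This comparison has been treated in the literature, by matching Putnam's bicomplex, built from $s/u$-bijective factor maps of the Smale space together with its shift-of-finite-type covers, with the bar complex that computes the groupoid homology of $R^u(X,f)$; see for example~\cite{many:smale} and the references therein. Once this identification is in hand, substitution into the right-hand side of the previous display produces the stated isomorphism.

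The main obstacle is clearly the second ingredient, i.e.\ the comparison of Putnam's homology with the ample-groupoid homology of $R^u(X,f)$. This is a nontrivial theorem in its own right, relying on the rather intricate structure of $s/u$-bijective factor maps and the functoriality of the homology theories involved; within the present argument, however, it appears only as a quotation. Granting it, the corollary is a clean application of Theorem~\ref{thm:hk-iso}, and no further dynamical input is needed.
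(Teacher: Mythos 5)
Your proposal is correct and follows the same route as the paper: apply Theorem~\ref{thm:hk-iso} to $G=R^u(X,f)$ (principal hence torsion-free, amenable by Putnam--Spielberg hence satisfying Baum--Connes by Tu), and then invoke the identification of Putnam's stable homology with the groupoid homology of $R^u(X,f)$. The only difference is the citation for that last identification: the paper quotes its own predecessors ([PYII]*{Theorem 5.1}, [PYIII]*{Theorem 3.5}), which in fact give an integral isomorphism $H^s_*(X,f)\cong H_*(R^u(X,f),\dZ)$, rather than the reference you point to.
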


\begin{proof}
We apply Theorem~\ref{thm:main-thm} to $G=R^u(X,f)$. Notice that $G$ is an equivalence relation, thus it has trivial isotropy groups; it is also amenable~\cite{put:spiel}*{Theorem 1.1}, hence it satisfies the Baum--Connes conjecture~\cite{tu:moy}. We have proved that $\rH^s_{*}(X,f)$ is isomorphic to $\rH_*(R^u(X,f)|_T,\dZ)$ for any choice of transversal $T$~\cite{py:part-two}*{Theorem 5.1}.
\end{proof}

We have proved in~\cite{py:part-three}*{Theorem 5.1} that $U(X,f)$ has finite rank $\rK$-groups for any non-wandering Smale space $(X,f)$.
When $(X,f)$ has totally disconnected stable sets, we can use Corollary~\ref{cor:hk-iso-smale} to give a more precise estimate on the rank.
Note that by construction $\rH^s_k(X,f)$ has finite rank and it vanishes for $k$ large enough~\cite{put:HoSmale}*{5.1.12}.

\begin{corollary}%\label{cor:hk-iso-smale}
Let $(X,f)$ be a non-wandering Smale space with totally disconnected stable sets. The following equality holds:
\[
\rank \rK_i(U(X,f)) =  \sum_{k\in \dS} \rank \rH^s_{i+2k}(X,f) \quad (i = 0, 1).
\]
\end{corollary}

\subsection{Topological full groups and algebraic \texorpdfstring{$\rK$}{K}-theory}

In~\cite{li:algk}, Li has proved a remarkable result connecting the homology of an ample groupoid with the homology of its associated topological full group.
A key ingredient in Li's strategy is the construction of the \emph{algebraic $\rK$-theory spectrum} associated to an ample groupoid $G$. Let us denote this object $\bK^{\mathrm{alg}}(G)$.
It is defined as the Segal $\rK$-theory of the symmetric monoidal category generated by the \emph{embedding category} $\Emb_c(G)$ from~\cite{moe:weakhomtype} whose objects are compact open sets of $G^{(0)}$ by using its \emph{assembler} structure; the details of this construction are not important for our purposes, the reader may consult~\cites{li:algk,klmms:scissors,zach:assk,segal:catcoh} for more information.

Recall that the Karoubi conjecture, as proved by Suslin and Wodzicki~\cite{suswod:karoubi}, states that the topological $\rK$-groups of a C$^*$-algebra $A$ are isomorphic to the algebraic $\rK$-groups of the stabilized C$^*$-algebra $A \otimes \cK$ (viewed as a discrete ring).
The following result shows that the algebraic $\rK$-theory spectrum of $G$ serves as a substitute of $\BGL(A \otimes \cK)^+$ after rationalization.
It can also be interpreted as determining the rational homotopy type of $\bK^{\mathrm{alg}}(G)[u,u^{-1}]$.

\begin{corollary}\label{cor:ktop-kalg}
Suppose $G$ is a second-countable, locally compact, Hausdorff, ample groupoid. There is an isomorphism
      \[
      \pi_*(\gr{G^\bullet \KU})\otimes \dQ \cong  \bigoplus_{k\in \dZ}\pi_{*+2k} (\bK^{\mathrm{alg}}(G))\otimes \dQ.
      \]
In particular, if $G$ is torsion-free and satisfies the rational Baum--Connes conjecture, 
\[
\rK_*(C^*_r G) \otimes \dQ \cong  \bigoplus_{k\in \dZ}\pi_{*+2k} (\bK^{\mathrm{alg}}(G))\otimes \dQ
\]
\end{corollary}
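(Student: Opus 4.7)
The plan is to reduce the first isomorphism to Proposition~\ref{prop:s-hom} together with Li's computation in~\cite{li:algk}, and then to combine with Theorem~\ref{thm:hk-iso} for the ``in particular'' statement. First, I would apply Proposition~\ref{prop:s-hom} to rewrite
\[
\pi_n(\gr{G^\bullet \KU})\otimes\dQ \cong \dH_n(G, K_\bullet(\dC)\otimes\dQ).
\]
The coefficient complex $K_\bullet(\dC)\otimes\dQ$ is a $2$-periodic graded vector space with $\dQ$ in even degrees and $0$ in odd degrees, so a degeneration of the coefficient hyperhomology spectral sequence (or direct inspection of the defining bicomplex) identifies the right-hand side with $\bigoplus_{k\in\dZ} H_{n+2k}(G,\dQ)$.

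Second, I would invoke Li's theorem in~\cite{li:algk}, which, upon rationalization, identifies $\pi_n(\dK^{\mathrm{alg}}(G))\otimes\dQ$ with $H_n(G,\dQ)$. Taking the direct sum over $2k$-shifts and using that rationalization commutes with direct sums gives
\[
\bigoplus_{k\in\dZ}\pi_{n+2k}(\dK^{\mathrm{alg}}(G))\otimes\dQ \cong \bigoplus_{k\in\dZ} H_{n+2k}(G,\dQ).
\]
Chaining this with the previous display establishes the first isomorphism of the corollary, and notably does not require either the torsion-free assumption or the Baum--Connes hypothesis.

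For the second assertion, Proposition~\ref{prop:khom-ku} identifies $\pi_*(\gr{G^\bullet\KU})$ with $K_*^{\mathrm{top}}(G,C_0(G^{(0)}))$ under the torsion-free assumption, and the rational Baum--Connes conjecture promotes this to $K_*(C^*_r G)\otimes\dQ \cong \pi_*(\gr{G^\bullet\KU})\otimes\dQ$. Substituting into the first isomorphism delivers the conclusion. The one point that genuinely needs care is extracting Li's identification in precisely the rationalized, degreewise form used above; once that is granted, the remainder is formal manipulation of $2$-periodic direct sums and exactness of rationalization.
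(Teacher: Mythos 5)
Your proposal is correct and follows essentially the same route as the paper: Proposition~\ref{prop:s-hom} for the $\KU$ side, Li's identification for the algebraic $K$-theory side, and Theorem~\ref{thm:hk-iso} (equivalently, Proposition~\ref{prop:khom-ku} plus rational Baum--Connes) for the ``in particular'' clause. The one step you flag as needing care is handled in the paper exactly as you anticipate: $\pi_n(\dK^{\mathrm{alg}}(G))\otimes\dQ \cong \pi_n(\dK^{\mathrm{alg}}(G)\wedge H\dQ)$ by~\eqref{eq:homotop-grp-rationalization}, which is the rational homology of the spectrum by the Hurewicz isomorphism, and this is $H_n(G,\dQ)$ by Li's Theorem~4.18.
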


\begin{proof}
On the one hand, we have
\[
\pi_{n} (\bK^{\mathrm{alg}}(G))\otimes \dQ \cong \pi_n(\bK^{\mathrm{alg}}(G) \wedge \bH \dQ),
\]
see~\eqref{eq:homotop-grp-rationalization}.
By the Hurewicz isomorphism~\cite{schede-book-draft-symm-spec}*{Proposition II.6.30}, $\pi_n(\bK^{\mathrm{alg}}(G) \wedge \bH \dQ)$ is isomorphic to the (reduced) homology of $\bK^{\mathrm{alg}}(G)$ with rational coefficients.
By~\cite{li:algk}*{Theorem 4.18}, this is isomorphic to $\rH_n(G; \dQ)$.
On the other hand, we saw that $\pi_m(\gr{G^\bullet \KU})\otimes \dQ$ agrees with $\bigoplus_k \rH_{m + 2k}(G; \dQ)$ in Proposition~\ref{prop:s-hom}.
Under the rational Baum--Connes conjecture, we can use $\rK_m(C^*_r G)$ instead of $\pi_m(\gr{G^\bullet \KU})$ by Theorem~\ref{thm:main-thm}. 
\end{proof}

\begin{remark}
The \emph{integral} computation of the homotopy groups of $\bK^{\mathrm{alg}}(G)$ remains an interesting problem, especially in view of recently established connections with Zakharevich's higher scissors congruence $\rK$-theory~\cites{klmms:scissors,zach:assk}.
\end{remark}

Regarding the topological full group, we have the following consequence. Let $\mathcal{R}$ denote the full equivalence relation on the natural numbers. For an ample groupoid $G$, we denote by $F(G)$ its topological full group. When $G^{(0)}$ is compact, the topological full group $F(G)$ is the group of compact open bisections with source and range equal to $G^{(0)}$. Multiplication is defined by multiplication of
bisections. If the unit space of $G$ is not compact, we define $F(G)$ as the colimit $F(G)=\varinjlim F(G|_U)$ taken over all compact open subspaces $U\subseteq G^{(0)}$ ordered by inclusion (when $U\subseteq V$, the bonding map $F(G|_U)\to F(G|_V)$ sends a bisection $\sigma$ to $\sigma \sqcup (V\smallsetminus U)$).

Given a $\dZ/2\dZ$-graded rational vector space $E_\bullet$, we consider the universal $\dZ/2\dZ$-graded unital commutative algebra $\uCom(E_\bullet)$.
Namely, it is a unital $\dQ$-algebra generated by the vectors of $E_\bullet$ subject to the relation
\[
x y = (-1)^{\absv{x} \absv{y}} y x
\]
for homogeneous elements $x$, $y$.
We treat it as a graded algebra by the grading
\[
\absv{x_1 \cdots x_n} = \sum_{i = 1}^n \absv{x_i}
\]
for homogeneous elements $(x_i)_i$.

\begin{corollary}\label{cor:fg-ext-sym}
Suppose $G$ satisfies the hypotheses of Corollary~\ref{cor:ktop-kalg}. There is an isomorphism
\[
\dH_*(F(G\times\mathcal{R}), \rK_\bullet(\dC)\otimes \dQ) \cong \uCom(\pi_\bullet(\gr{G^\bullet\KU}) \otimes \dQ)
\]
of $\dZ/2\dZ$-graded vector spaces.
In particular, if $G$ is torsion-free and satisfies the rational Baum--Connes conjecture, there is an isomorphism
\begin{equation}\label{eq:ext-sym-bc}
\dH_*(F(G\times\mathcal{R}), \rK_\bullet(\dC)\otimes \dQ)  \cong \uCom(\rK_\bullet(C^*_r G)\otimes \dQ)
\end{equation}
of $\dZ/2\dZ$-graded vector spaces. 
\end{corollary}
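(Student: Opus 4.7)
The plan is to combine a group-completion description of the classifying space of $F(G\times\mathcal{R})$ via the algebraic $K$-theory spectrum $\dK^{\mathrm{alg}}(G)$ with the rational splitting of connective spectra into Eilenberg--MacLane summands, and then periodicize to convert the resulting $\dZ$-graded statement into a $\dZ/2\dZ$-graded one. I would first refine Li's identification $H_*(\dK^{\mathrm{alg}}(G),\dZ)\cong H_*(G,\dZ)$ from~\cite{li:algk}*{Theorem 4.18} from the level of spectrum homology to the level of infinite loop spaces. The spectrum $\dK^{\mathrm{alg}}(G)$ is constructed via Segal's $K$-theory of a permutative category, and so comes equipped with a canonical group completion map from the classifying space of the stabilized permutative groupoid (whose underlying automorphism group is precisely $F(G\times\mathcal{R})$) into $\Omega^\infty \dK^{\mathrm{alg}}(G)$. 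Applying Quillen's plus construction to this map yields an equivalence $BF(G\times\mathcal{R})^+ \simeq \Omega^\infty_0 \dK^{\mathrm{alg}}(G)$, and since the plus construction preserves homology, I obtain
\[
H_*(F(G\times\mathcal{R}),\dQ) \cong H_*\bigl(\Omega^\infty_0 \dK^{\mathrm{alg}}(G);\dQ\bigr).
\]

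Next, I would apply Serre's rational splitting of connective spectra: $\dK^{\mathrm{alg}}(G) \wedge H\dQ$ splits as a wedge $\bigvee_n \Sigma^n H(\pi_n(\dK^{\mathrm{alg}}(G)) \otimes \dQ)$, and taking the basepoint component of $\Omega^\infty$ turns this into a product of rational Eilenberg--MacLane spaces. By the Cartan--Serre computation of $H^*(K(V,n);\dQ)$ (polynomial for even $n$, exterior for odd $n$) combined with Künneth, the rational homology of this product is the free graded-commutative algebra $\uCom(\pi_{\geq 1}(\dK^{\mathrm{alg}}(G))\otimes\dQ)$. Composing with Li's Theorem 4.18 to identify $\pi_n(\dK^{\mathrm{alg}}(G))\otimes\dQ \cong H_n(G,\dQ)$ yields an isomorphism of $\dZ$-graded commutative algebras. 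To periodicize, I replace the left-hand side with the hyperhomology $\dH_*(F(G\times\mathcal{R}), K_\bullet(\dC)\otimes\dQ)=\bigoplus_k H_{*+2k}(F(G\times\mathcal{R}),\dQ)$, and use Corollary~\ref{cor:ktop-kalg} on the right, which identifies $\bigoplus_k \pi_{\bullet+2k}(\dK^{\mathrm{alg}}(G))\otimes\dQ$ with $\pi_\bullet(\gr{G^\bullet\KU})\otimes\dQ$ as a $\dZ/2\dZ$-graded vector space. Since the graded-commutativity relation defining $\uCom$ depends only on the parity of the generators, periodicization commutes with the formation of $\uCom$, giving the first claimed isomorphism. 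The ``in particular'' statement then follows by substituting the rational Baum--Connes isomorphism of Theorem~\ref{thm:hk-iso}.

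The main obstacle will be securing the upgrade from Li's spectrum-level identification to the infinite-loop-space statement $BF(G\times\mathcal{R})^+ \simeq \Omega^\infty_0 \dK^{\mathrm{alg}}(G)$. Although this should follow formally from Segal's $\Gamma$-space machinery and the group completion theorem for permutative categories, one must verify the explicit compatibility between Li's construction of $\dK^{\mathrm{alg}}(G)$ and the standard group completion map, and confirm that stabilization by $\mathcal{R}$ correctly plays the role of the $GL_\infty$-stabilization underlying the ordinary plus construction; care is also needed in matching the $\pi_0$/$H_0$ contribution on both sides so that the $\dZ/2\dZ$-graded comparison lands in the exact form claimed.
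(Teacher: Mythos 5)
The paper's own proof is a two-line affair: it quotes Li's Corollary~6.1, which already states that $\dH_*(F(G\times\cR), K_\bullet(\dC)\otimes \dQ) \cong \uCom(H_\bullet(G,\dQ))$, and then substitutes $H_\bullet(G,\dQ)\cong\pi_\bullet(\gr{G^\bullet\KU})\otimes\dQ$ (Proposition~\ref{prop:s-hom}) resp.\ $\cong K_\bullet(C^*_r G)\otimes\dQ$ (Theorem~\ref{thm:hk-iso}). You instead re-derive Li's corollary from his Theorem~4.18 via group completion, the plus construction, the rational splitting of connective spectra, and the Cartan--Serre computation. That outline is the standard one (and is essentially how Li proves his Corollary~6.1), but it shifts the entire weight onto the step you yourself call the ``main obstacle'': the equivalence $BF(G\times\cR)^+\simeq\Omega^\infty_0\dK^{\mathrm{alg}}(G)$. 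This is not a formal consequence of Segal's machinery --- identifying the stabilized automorphism groups of the embedding category with $F(G\times\cR)$ and verifying the hypotheses of the group-completion theorem is precisely the nontrivial content of Li's paper --- so as written your argument re-proves the cited black box only up to re-asserting its hardest ingredient.

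There is also a concrete gap in the final matching that you flag but do not close. Passing through the basepoint component $\Omega^\infty_0$ yields $\uCom\bigl(\pi_{\geq 1}(\dK^{\mathrm{alg}}(G))\otimes\dQ\bigr)\cong\uCom(H_{\geq 1}(G,\dQ))$, and you are right that forming $\uCom$ commutes with collapsing the grading to $\dZ/2\dZ$, since the sign rule only sees parity. But the claimed right-hand side $\uCom(\pi_\bullet(\gr{G^\bullet\KU})\otimes\dQ)$ has $\bigoplus_k H_{2k}(G,\dQ)$, \emph{including} $H_0(G,\dQ)$, among its even-degree generators, whereas your construction sees $H_0(G,\dQ)$ only through the $\pi_0$-components of the infinite loop space (a group-ring-type contribution, not a symmetric-algebra one). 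Reconciling these two descriptions of the degree-zero generators is exactly the bookkeeping that citing Li's Corollary~6.1 avoids, and until it is done your isomorphism does not land in the form stated in the corollary.
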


\begin{proof}
By~\cite{li:algk}*{Corollary 6.1}, we have
\[
\dH_*(F(G\times\mathcal{R}), \rK_\bullet(\dC)\otimes \dQ) \cong \uCom(H_\bullet(G; \dQ)),
\]
where we treat $\rH_\bullet(G; \dQ)$ as a $\dZ/2\dZ$-graded vector space in the obvious way.
Proposition~\ref{prop:s-hom} and Theorem~\ref{thm:main-thm} say that $\rH_\bullet(G; \dQ)$ is isomorphic to the graded vector spaces $\pi_\bullet(\gr{G^\bullet\KU}) \otimes \dQ$ and $\rK_\bullet(C^*_r G)\otimes \dQ$ under the respective assumptions..
\end{proof}

\begin{remark}\label{rem:fgr-fg}
In Corollary~\ref{cor:fg-ext-sym} we can replace $F(G\times \cR)$ with $F(G)$ when $G^{(0)}$ has no isolated points, and $G$ is minimal and has comparison, see~\cite{li:algk}*{Corollary 6.1} for more details.
In analogy with Corollary~\ref{cor:hk-iso}, if we drop the assumption on the Baum--Connes conjecture, then~\eqref{eq:ext-sym-bc} holds by plugging in the left-hand side of the assembly map, i.e., replacing $\rK_*(C^*_r G)$ with $\rK_*^{\mathrm{top}}(G; C_0(G^{(0)}))$.
\end{remark}

\begin{remark}
To illustrate the utility of Corollary~\ref{cor:fg-ext-sym}, we make the following observation, already contained in~\cite{li:algk}*{Introduction}.
As observed in~\cite{nekra:v}, Thompson's group $V$ is isomorphic to the topological full group of the ample groupoid $G_2$, known as the Renault--Deaconu groupoid for the full shift on two symbols.
It is well-known that the C$^*$-algebra $C^*_r G_2$ is (stably) isomorphic to the Cuntz algebra $\cO_2$~\cite{cuntz:otwo}.
Since $\rK_*(\cO_2)$ is trivial, $V$ can be seen to be rationally acyclic by Remark~\ref{rem:fgr-fg}, which is a result by Brown~\cite{brown:fp}.
In fact, $V$ is even integrally acyclic~\cite{sw:ht}.
It is known that the HK isomorphism of Corollary~\ref{cor:hk-iso} holds integrally for $G_2$ (the spectral sequence in~\cite{py:part-one} collapses, see also~\cite{fkps:hk}), thus all homology groups $\rH_*(G_2,\dZ)$ vanish, and in turn this implies the vanishing of $\tilde{\rH}_*(V,\dZ)$ by Li's result in~\cite{li:algk}*{Section 6.2}.
\end{remark}

\subsection{Beyond the ample case}%\label{subsec:ethk}

In our forthcoming work~\cite{py:upcoming}, we will prove a generalization of Theorem~\ref{thm:a-def-ch}, namely, give a rational isomorphism
\[
\rK_i^{\mathrm{top}}(G; C_0(G^{(0)})) \to \bigoplus_{k\in \dZ} \rH_{i+2k}(G; \dQ) \quad (i = 0, 1)
\]
for second-countable, locally compact, Hausdorff étale groupoids $G$.
For a locally compact Hausdorff space regarded as a groupoid, this reduces to the classical Chern character isomorphism of~\eqref{eq:chdoldconc}, since groupoid homology for a trivial groupoid computes compactly supported cohomology with grading inverted, see~\cite{cramo:hom}*{Section 3.5}.

More generally, when $G$ is the transformation groupoid $\Gamma \ltimes X$ for an action of a (countable) discrete group $\Gamma$ on a locally compact, second countable, Hausdorff space $X$, such an isomorphism follows from a recent paper by Deeley and Willett~\cite{dw:hk}. 

\subsection{Groups, group actions, and other variants}

In case of transformation groupoids arising from group actions on spaces, the Baum--Connes conjecture is well understood.
In particular, we find isomorphisms analogous to Theorem~\ref{thm:main-thm} in the literature.
For example, when $G$ is a discrete group (possibly with torsion elements) regarded as a groupoid, a rational isomorphism between the operator K-groups and the Bredon group homology groups follow from Lück's work~\cite{lueck:chern}.

Turning to the case of transformation groupoids, Raven~\cite{raven:phd} established a rational isomorphism between the equivariant $\KK$-groups of (proper, finite) $G$-CW-complexes and the bivariant homology theory introduced by Baum and Schneider~\cites{bs:chern}. 
This applies to actions of totally disconnected groups as well. In particular, using equivariant cylic homology, Voigt shows that Baum--Schneider homology is rationally isomorphic to the equivariant Kasparov group for algebras of finite-dimensional, locally finite (proper) $G$-simplicial complexes~\cite{voigt:chern}.

It is an interesting task to explore this direction for more general topological groupoids. From the standpoint of noncommutative geometry, it is natural to seek for a construction of the map above using the Chern--Connes character and the periodic cyclic homology of a smooth subalgebra of~$C^*_r G$. 
Recent work of Pagliuca and Voigt provides a promising first step in this direction~\cite{pv:cycg}.

In the case of torsion-free groups acting on spaces, Raven's Chern character map can be refined to have $\rK_*^{\mathrm{top}}(G; C_0(X))$ as the domain and the groupoid homology $\rH_*(G;\dC)$ as the codomain (see~\cite{dw:hk}).
In particular, the \emph{étale} rational HK conjecture for transformation groupoids with torsion-free stabilizers that satisfy the Baum--Connes conjecture follows from this.
In the presence of torsion in the stabilizers, one should consider the ``blow-up'' groupoid associated to $G$~\cite{bc:chern}, see~\cite{dw:hk}*{Theorem 3.1}.

\smallskip

The previous two variants involve modifying the right-hand side, i.e., the homology side, of Theorem~\ref{thm:main-thm}. We would like to propose a third variant where the left-hand side, i.e., the $\rK$-theory side, is changed. This approach implicitly appeared in our previous work~\cite{py:part-one}*{Section 5.3}. Let $G$ be an étale groupoid with unit space $X$. Let $P^\prime(C_0(X))$ denote the cellular approximation associated to the family of subgroupoids $\{X\}$, as explained in Section~\ref{subsec:capp}. Note that in general we do not have $P(C_0(X))\cong P^\prime(C_0(X))$ unless $G$ is torsion-free (see Remark~\ref{rem:indresX}). From Theorem~\ref{thm:cellapp-colim}, Proposition~\ref{prop:s-hom}, and Proposition~\ref{prop:id-simp-obj}, we know that $\rK_*(P^\prime(C_0(X))\rtimes G)\cong  \pi_*(\gr{G^{\bullet}\KU})$. On this basis we formulate the following variant of the HK conjecture:
\begin{equation}\label{eq:hk-pprime}
\rK_*(P^\prime(C_0(X))\rtimes G)\otimes \dQ \cong \dH_*(G;\rK_\bullet(\dC)\otimes\dQ).
\end{equation}
In some cases an explicit model of $P^\prime(C_0(X))$ is available. This happens for example for the dihedral group action featuring in Scarparo's counterexample~\cite{scarparo:homod}, see~\cite{py:part-one}*{Corollary 5.5}. Following Section~\ref{sec:hk-coeff}, a version of~\eqref{eq:hk-pprime} with coefficients can be obtained by replacing $P^\prime(C_0(X))$ with $P^\prime(A)\cong P^\prime(C_0(X))\otimes_X A$ and $\rK_\bullet(\dC)$ with $\rK_\bullet(A)$ for a separable $G$-C$^*$-algebra $A$.

\appendix
\section{Categorical structures behind equivariant \texorpdfstring{$\KK$}{KK}-theory}\label{sec:kkg-stable}

\subsection{Stable $\infty$-category}

Let $G$ be a second countable, locally compact, and Hausdorff groupoid. We will assume $G$ admits a Haar system (see~\cite{ren:group}*{Chapter 1, Section 2}).

Let $\CstAlg_s^G$ denote the category of \emph{separable} $G$-C$^*$-algebras and equivariant $\ast$-homomorphisms.
The $G$-equivariant Kasparov groups $\KK^G(A,B)$ for objects in this category~\cite{lg:kk} define an additive category $\KK^G$.
Let $\kk^G\colon \CstAlg_s^G \to \KK^G$ be the natural functor.
This functor $\kk^G$ is universal among homotopy invariant, stable, and split-exact functors into additive categories, see~\cite{py:part-one}.
Moreover, the category $\KK^G$ is triangulated, and its exact triangles are the mapping cone triangles~\cites{val:kthpgrp,py:part-one}.

Let us show that there is a stable $\infty$-category that realizes $\KK^G$ as its homotopy category, following~\citelist{\cite{landnik:comp}\cite{bel:kkstable}}.

\begin{definition}
Let $\cW\subset \CstAlg_s^G$ be the class of $*$-homomorphisms $\phi$ such that $\kk^G(\phi)$ is an isomorphism (i.e., $\KK^G$-equivalences).
We define the $(\infty, 1)$-category $\icKK^G$ as the Dwyer--Kan localization $\CstAlg_s^G[\cW^{-1}]$, together with the canonical $\infty$-functor $\ickk^G \colon \CstAlg_s^G \to \icKK^G$.
\end{definition}

By the universality of localization we have an $\infty$-functor $\icKK^G \to \KK^G$ which is identity on objects.
Since the target category is an ordinary category, this induces an ordinary functor from the homotopy category, which we denote by $\ho_0\colon \ho(\icKK^G)\to \KK^G$.
Our main goal is the following.

\begin{theorem}\label{thm:kkg-infty-stable-tri}
The $\infty$-category $\icKK^G$ is stable, and $\ho_0$ is an equivalence of triangulated categories.
\end{theorem}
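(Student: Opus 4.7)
The plan is to adapt the blueprint of~\cites{landnik:comp,bel:kkstable}, which handles the non-equivariant case and the discrete-group-equivariant case respectively, to the groupoid setting using the analytic foundations of Le Gall~\cite{lg:kk} and Lafforgue~\cite{laf:kkban}. The first task is to show that $\icKK^G$ is pointed and admits finite colimits. The zero $G$-C$^*$-algebra provides a zero object, and the direct sum $A \oplus B$ becomes a biproduct in $\icKK^G$ by split-exactness of $\ickk^G$. For cofibers, given a $\ast$-homomorphism $\phi \colon A \to B$, one uses the mapping cone $C_\phi = \{(a,f) \in A \oplus C_0((0,1], B) \mid f(1) = \phi(a)\}$; combining Puppe-type reasoning with the homotopy invariance of $\ickk^G$, the sequence $A \to B \to C_\phi$ realizes a cofiber of $\ickk^G(\phi)$ in $\icKK^G$. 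A general morphism in $\icKK^G$ is represented up to equivalence by a $\ast$-homomorphism after passing through a Cuntz-type picture, so cofibers exist for every morphism.

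Next, for stability, $G$-equivariant Bott periodicity~\cite{lg:kk} yields a canonical isomorphism between $C_0(\dR^2) \otimes A$ and $A$ in $\KK^G$. Lifting this to a natural equivalence in the $\infty$-category $\icKK^G$ shows that the suspension functor, modelled by $A \mapsto C_0(\dR) \otimes A$, is invertible. A pointed $\infty$-category with finite colimits and invertible suspension in which every cofiber sequence is a fiber sequence is stable; the coincidence of fiber and cofiber sequences is guaranteed by the fact that mapping cone extensions yield both in the triangulated category $\KK^G$~\cite{py:part-one}, combined with a general argument in the style of~\cite{cis:ha}.

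Finally, to establish that $\ho_0 \colon \ho(\icKK^G) \to \KK^G$ is an equivalence of triangulated categories: essential surjectivity is immediate since both categories share the same class of objects. For fully faithfulness, observe that the composite functor $\CstAlg^s_G \to \ho(\icKK^G)$ is homotopy invariant, $C^*$-stable, and split-exact, where the first is built into the localization and the latter two follow from stability established above. The universal property of $\kk^G$ proved in~\cite{py:part-one} then yields a functor $\KK^G \to \ho(\icKK^G)$ inverse to $\ho_0$, and the triangulated structures match because the distinguished triangles in both categories are generated by mapping cone sequences of $\ast$-homomorphisms. The main obstacle is the rigorous verification that the analytic mapping cone construction yields a genuine cofiber sequence in the Dwyer--Kan localization $\icKK^G$, rather than merely an exact triangle in its homotopy category; this is precisely the step where the technical machinery from~\cites{bel:kkstable,landnik:comp} requires careful adaptation to the setting of groupoids equipped with a Haar system.
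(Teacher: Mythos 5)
Your overall architecture matches the paper's: Bott periodicity to invert the suspension, Lurie's criterion that a pointed finitely (co)complete $\infty$-category with invertible (de)suspension is stable, and the universal property of $\kk^G$ to produce the inverse of $\ho_0$. But you have correctly identified, and then left unresolved, the one step that carries all the weight: showing that the analytic mapping cone (or kernel) sequence is a genuine (co)fiber sequence \emph{in the Dwyer--Kan localization}, not merely an exact triangle in its homotopy category. ``Puppe-type reasoning plus homotopy invariance'' establishes exactness at the level of $\KK^G$-groups, i.e.\ on $\pi_0$ of mapping spaces; a cofiber in an $\infty$-category is a universal property of the full mapping spaces, and nothing in your argument bridges that gap. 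Without such a bridge you also cannot justify that $C_0(\dR)\otimes A$ models $\Omega A$ (or $\Sigma A$) in $\icKK^G$, so the appeal to Bott periodicity does not yet yield invertibility of the $\infty$-categorical loop functor.

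The paper's device for this is to equip $\CstAlg^s_G$ with the structure of a category of fibrant objects in the sense of Brown: weak equivalences are the $\KK^G$-equivalences and fibrations are the surjections admitting a $G$-equivariant completely positive contractive section. Verifying Brown's axioms uses the six-term exact sequence for semisplit extensions (so that acyclic fibrations are exactly the fibrations with $\KK^G$-trivial kernel, a class stable under pullback), and the mapping cylinder gives the factorization of an arbitrary morphism as a weak equivalence followed by a fibration. Cisinski's theory of $\infty$-categories with weak equivalences and fibrations then guarantees that the localization functor is left-exact: it sends pullbacks along fibrations to pullbacks in $\icKK^G$ and ensures $\icKK^G$ has finite limits. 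This is what turns the square exhibiting $SA$ as an intersection of two cones into a genuine pullback square in $\icKK^G$, identifies $SA\simeq\Omega A$, and makes kernel sequences of semisplit surjections into fiber sequences. If you want to complete your proposal you either need to carry out this fibration-category argument (or a dual cofibration-category one), or find some other computational handle on limits in the localization; asserting the conclusion, as you do, is precisely the missing content.
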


Let us call a morphism $f\in \CstAlg^G$ a \emph{semisplit surjection} if it is surjective and it admits a $G$-equivariant completely positive contractive section.

\begin{definition}[\cite{MR341469}]
A \emph{category of fibrant objects} is given by a category $\cC$ with a terminal object $*$, together with two classes of morphisms $\cW$ (``weak equivalences'') and $\cF$ (``fibrations'') such that:
\begin{itemize}
\item both classes contain the isomorphisms in $\cC$, and are closed under compositions;
\item the unique morphism $X \to *$ belongs to $\cF$ for all objects $X$;
\item for any $f \colon X \to Z$ in $\cF$ and for any morphism $g \colon Y \to Z$, a pullback
\begin{equation}\label{eq:pullback}
\begin{tikzcd}
  V \arrow[r] \arrow[d, "f'"] & X \arrow[d, "f"] \\
  Y \arrow[r, "g"] & Z
\end{tikzcd}
\end{equation}
exists in $\cC$, and $f'$ belongs to $\cF$;
\item if furthermore $f$ belongs to $\cW$, then $f'$ belongs to $\cW$ as well;
\item $\cW$ has the two-out-of-three property.
\end{itemize}
\end{definition}
 
\begin{lemma}[cf.~\citelist{\cite{uuye:hom}\cite{bel:kkstable}*{Proposition 2.10}}]\label{lem:cat-fib-objs}
The category $\CstAlg_s^G$ is a category of fibrant objects together with:
\begin{itemize}
\item the class of $\KK^G$-equivalences as weak equivalences; and
\item the class of semisplit surjections as fibrations.
\end{itemize}
\end{lemma}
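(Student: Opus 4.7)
The plan is to verify the five axioms in the definition of a category of fibrant objects in turn. Most of them reduce to routine manipulation of completely positive sections, and the only substantive input is the triangulated structure on $\KK^G$ established earlier in~\cite{py:part-one} and~\cite{val:kthpgrp}.

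First I would identify the zero $G$-C$^*$-algebra $0$ as a terminal object, and observe that the unique map $A \to 0$ is surjective with equivariant completely positive contractive (c.p.c.) section $0 \mapsto 0$, giving the third axiom. Both classes obviously contain all isomorphisms. The class $F$ is closed under composition: if $\sigma_f$ and $\sigma_g$ are equivariant c.p.c.\ sections of $f \colon A \to B$ and $g \colon B \to C$ respectively, then $\sigma_f \circ \sigma_g$ is an equivariant c.p.c.\ section for $g \circ f$. The class $W$ is the preimage of the isomorphisms in $\KK^G$ under the functor $\kk^G$, so it automatically inherits closure under composition and the two-out-of-three property.

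For existence of pullbacks along fibrations I would take the usual fiber product $W = \{(x,y) \in X \oplus Y : f(x) = g(y)\}$, which is a separable $G$-C$^*$-algebra satisfying the universal property of the pullback in $\CstAlg^s_G$. The projection $f' \colon W \to Y$ is surjective because $(\sigma_f(g(y)), y)$ lies in $W$ for every $y \in Y$, and the formula $\sigma_{f'}(y) = (\sigma_f(g(y)), y)$ defines an equivariant c.p.c.\ section, being a composition of equivariant c.p.\ contractions; hence $f' \in F$.

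The main step is the last axiom: if $f$ is additionally a $\KK^G$-equivalence, then so is $f'$. Here I would observe that the kernels $J = \ker f$ and $J' = \ker f'$ are isomorphic as $G$-C$^*$-algebras via $j \mapsto (j, 0)$, and that both short exact sequences $J \to X \to Z$ and $J' \to W \to Y$ are semisplit. These induce mapping-cone distinguished triangles in the triangulated category $\KK^G$, so $f$ being a $\KK^G$-equivalence is equivalent to $J$ being a zero object in $\KK^G$; transport along the isomorphism $J' \cong J$ then gives that $J'$ is also a zero object, and hence that $f'$ is a $\KK^G$-equivalence. The main obstacle I anticipate is less in this argument itself than in the bookkeeping around equivariance and complete positivity of the constructed section $\sigma_{f'}$, and more conceptually in the a priori reliance on the triangulated enhancement of $\KK^G$; both are addressed by the cited references~\cite{uuye:hom} and~\cite{bel:kkstable}.
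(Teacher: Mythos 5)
Your proposal is correct and follows essentially the same route as the paper: identify $0$ as the terminal object, construct the c.p.c.\ section $y \mapsto (\sigma_f(g(y)), y)$ for the pulled-back fibration, and reduce the last axiom to the observation that a semisplit surjection is a $\KK^G$-equivalence precisely when its kernel vanishes in $\KK^G$, combined with the identification $\ker f' \cong \ker f$. The paper phrases this last step via the six-term exact sequence in $\KK^G(Z,-)$ (citing Tu) rather than via mapping-cone triangles, but this is the same underlying fact.
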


\begin{proof}
The terminal object is given by the trivial algebra $0$.
Pullbacks exist in $\CstAlg^G$ in general, which are given by the usual pullback of vector spaces, that is, $V$ in~\eqref{eq:pullback} can be given by
\[
X \times_Z Y = \{ (x, y) \in X \times Y \mid f(x) = g(y) \}.
\]
This construction is compatible with surjective maps $f$ with completely positive splitting, since
\[
Y \to X \times_Z Y, \quad
y \mapsto (s(g(y)), y)
\]
would be a completely positive splitting for the map $f'(x, y) = y$ if $s$ is a one for $f$.

The only nontrivial remaining point is the compatibility for morphisms in $\cF \cap \cW$ with respect to the pullback.
We note that an equivariant $*$-homomorphism $f \colon X \to Y$ is in this intersection class if and only if it is in the class $\cF$ and its kernel $I$ is isomorphic to $0$ in the category $\KK^G$.
Indeed, by knowing that $f$ is in $\cF$, we have a $6$-term exact sequence
\[
\cdots \to \KK^G_i(Z, I) \to \KK^G_i(Z, X) \to \KK^G_i(Z, Y) \to \KK^G_{i + 1}(Z, I) \to \cdots
\]
for any $Z \in \CstAlg_s^G$, see~\cite{tu:hyp}*{Proposition 7.2}.
In particular, $f$ belongs to $\cF \cap \cW$ if and only if $I$ is isomorphic to $0$ in $\KK^G$.

Now, coming back to the pullback diagram~\eqref{eq:pullback}, by the above description we see that $\ker f'$ agrees with $\ker f$.
In particular, $f'$ is in $\cF \cap \cW$ if $f$ is in this class.
\end{proof}

\begin{remark}\label{rem:suspension-as-pullback}
Given a $G$-C$^*$-algebra $A$, the suspension $S A = C_0((0, 1)) \otimes A$ can be fit into the following pullback diagram in $\CstAlg_s^G$:
\[
\begin{tikzcd}
S A \arrow[r] \arrow[d] & C_0([1/2, 1)) \otimes A \arrow[d, "\mathrm{ev}_{1/2}"] \\
C_0((0, 1/2]) \otimes A \arrow[r, "\mathrm{ev}_{1/2}"] & A.
\end{tikzcd}
\]
The map $\mathrm{ev}_{1/2}\colon C_0((0, 1/2]) \otimes A \to A$ belongs to the class $\cF$, with the completely positive splitting map $a \mapsto t \otimes a$.
Of course, the same holds for the evaluation map from $C_0([1/2, 1)) \otimes A$.
\end{remark}

\begin{lemma}\label{lem:cat-fib-obj-extra-factoriz-prop}
Any morphism $f \colon A \to B$ in $\CstAlg_s^G$ can be factorized as $f = g \circ h$, with morphisms $h \in \cW$ and $g \in \cF$.
\end{lemma}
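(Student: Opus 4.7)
The plan is to perform a C$^*$-algebraic version of the classical mapping path space construction, which is the standard tool for producing such factorizations in a category of fibrant objects. Given $f \colon A \to B$ in $\CstAlg^s_G$, I would define
\[
Z_f = \{ (a, \beta) \in A \oplus C([0, 1], B) \mid \beta(1) = f(a) \},
\]
equipped with the diagonal $G$-action (trivial on $[0,1]$), which is clearly a separable $G$-C$^*$-algebra. Set $h \colon A \to Z_f$, $a \mapsto (a, \mathrm{const}_{f(a)})$, and $g \colon Z_f \to B$, $(a, \beta) \mapsto \beta(0)$. Then $g \circ h = f$ by inspection.

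To see that $g \in F$, I would exhibit the map $s \colon B \to Z_f$, $b \mapsto (0, t \mapsto (1-t) b)$, which is plainly a $G$-equivariant linear contraction that is completely positive (being the composition of the c.p.\ map $b \mapsto ((1-t)b)_{t \in [0,1]}$ with the embedding into the direct sum) and satisfies $g \circ s = \id_B$; in particular $g$ is surjective with a $G$-equivariant c.p.c.\ section.

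To see that $h \in W$, I would consider the projection $\pi \colon Z_f \to A$, $(a, \beta) \mapsto a$, which satisfies $\pi \circ h = \id_A$. For the other composition, I would define a $G$-equivariant $*$-homomorphism $H \colon Z_f \to C([0, 1], Z_f)$ by
\[
H(a, \beta)(s) = \bigl(a, \; t \mapsto \beta(s + (1-s)t)\bigr),
\]
noting that evaluation at $t = 1$ yields $\beta(1) = f(a)$, so the image indeed lies in $Z_f$. Since $H(\cdot)(0) = \id_{Z_f}$ and $H(\cdot)(1) = h \circ \pi$, we obtain a $G$-equivariant homotopy between these two endomorphisms. By homotopy invariance of $\kk^G$, $h$ and $\pi$ are mutually inverse $\KK^G$-equivalences, so $h \in W$.

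I do not expect a genuine obstacle here: the construction is a textbook one and the verifications reduce to checking that the obvious maps are $G$-equivariant, that the splitting $s$ is c.p.\ (which follows from the straight-line contraction being c.p.), and that the homotopy $H$ is defined by composing $\beta$ with an affine reparametrization of $[0, 1]$, so it is automatically a $*$-homomorphism and $G$-equivariant. The only point requiring mild care is ensuring that $Z_f$ remains separable, which is immediate since it is a C$^*$-subalgebra of the separable algebra $A \oplus C([0,1], B)$.
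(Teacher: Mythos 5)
Your proposal is correct and follows essentially the same route as the paper: both factor $f$ through the mapping cylinder $\{(a,\beta)\in A\oplus C([0,1],B)\}$ (yours pins $\beta$ at $t=1$, the paper at $t=0$, which is only a reparametrization), with the same constant-path inclusion as the weak equivalence and the same straight-line c.p.c.\ section for the evaluation map. The only difference is that you write out the homotopy $h\circ\pi\simeq \id$ explicitly, which the paper leaves implicit.
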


\begin{proof}
Let us take the mapping cylinder
\[
C = \{ (a, k) \in A \oplus C([0, 1], B) \mid f(a) = k(0) \}.
\]
Then the canonical map $h \colon A \to C$ given by $a \mapsto (a, k_{f(a)})$, where $k_b$ is the constant function with value $b$, is a homotopy inverse to the first factor projection $C \to A$, hence is a $\KK^G$-equivalence.
Moreover, the evaluation map $g \colon C \to B$ given by $(a, k) \mapsto k(1)$ is in the class $\cF$, with completely positive splitting given by $b \mapsto (0, t b)$.
\end{proof}

\begin{lemma}\label{lem:bott-periodicity}
In the $\infty$-category $\icKK^G$, we have a natural isomorphism $S^2 A \cong A$.
\end{lemma}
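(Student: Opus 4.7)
My strategy is to reduce the $\infty$-categorical Bott periodicity to the classical statement in the triangulated category $\KK^G$ together with the general principle that equivalences in a Dwyer--Kan localization are detected in the homotopy category. The two ingredients are: (i) equivariant Bott periodicity for groupoids, which is classical in $\KK^G$ by the work of Le Gall, providing a natural isomorphism $\beta_A \colon S^2 A \xrightarrow{\cong} A$ in $\KK^G$; and (ii) the conservativity of the map $\ho(\icKK^G) \to \KK^G$. Since $\icKK^G = \CstAlg^s_G[W^{-1}]$ is a Dwyer--Kan localization at $W$, the homotopy category $\ho(\icKK^G)$ is canonically the ordinary localization $\CstAlg^s_G[W^{-1}]_{\mathrm{ord}}$, and the universal property of $\kk^G$ identifies this with $\KK^G$ (on the nose for the underlying set of objects, and as pointed sets of morphisms for the $\Hom$-functors; the fact that we land in an additive category is compatible with localization). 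Consequently, a morphism in $\icKK^G$ is an equivalence exactly when its image in $\KK^G$ is an isomorphism.

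The core step is therefore to exhibit \emph{some} morphism $\tilde{\beta}_A \colon S^2 A \to A$ in the quasicategory $\icKK^G$ whose class $\ho_0(\tilde{\beta}_A)$ in $\KK^G$ agrees with the classical Bott isomorphism $\beta_A$. To construct such a morphism via the hammock localization, I will represent the Bott element by a zigzag in $\CstAlg^s_G$ using the Toeplitz extension
\[
  0 \to \cK \to \cT \to C(S^1) \to 0,
\]
viewed as an extension of $G$-C$^*$-algebras with trivial $G$-action, which is semisplit. By Lemma~\ref{lem:cat-fib-objs}, the quotient $\cT \to C(S^1)$ belongs to the class $F$ of fibrations, so tensoring with $SA$ produces a fibration whose fiber is $\cK \otimes S A \simeq S A$ (via Morita invariance), realizing the Bott connecting map as a pullback in the category of fibrant objects. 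Via Remark~\ref{rem:suspension-as-pullback} and the identification of pullbacks of fibrations in $\CstAlg^s_G$ with fibers in $\icKK^G$ under localization, this pullback yields a morphism $S^2 A \to A$ in $\icKK^G$. The fact that its image in $\KK^G$ equals $\beta_A$ is the standard identification of the Bott class with the $\KK$-theoretic connecting map of the Toeplitz extension, cf.~\cite{kas:opKfunct}.

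Naturality in $A$ is automatic from this construction, as the representative zigzag is obtained by tensoring a fixed zigzag built from $\cT$, $C(S^1)$, and $\cK$ with the variable $A$; this is a functor $\CstAlg^s_G \to \CstAlg^s_G$ inducing a natural transformation after passage to $\icKK^G$, which by step (i) takes equivalence values.

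The main obstacle is to make step (ii) fully rigorous in the present setting, namely the identification $\ho(\icKK^G) \simeq \KK^G$. Once this is known, the rest is classical Bott periodicity dressed up with the fibration-category machinery. In practice, the identification of the homotopy category of a Dwyer--Kan localization with the ordinary localization is standard (see e.g.~\cite{land:inftybook}*{Chapter 2}), and the universality of $\KK^G$ among homotopy invariant, stable, split-exact functors from $\CstAlg^s_G$ to additive categories completes the comparison.
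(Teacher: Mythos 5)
Your overall strategy has a genuine circularity problem in the context of this paper's development. Your step (ii) asserts that a morphism in $\icKK^G$ is an equivalence exactly when its image in $\KK^G$ is an isomorphism, i.e.\ that $\ho_0\colon \ho(\icKK^G)\to\KK^G$ is (part of) an equivalence. But in the paper this identification is the content of Theorem~\ref{thm:kkg-infty-stable-tri}, whose proof requires first knowing that $\icKK^G$ is stable, which in turn requires exactly Lemma~\ref{lem:bott-periodicity} (to show that $\Omega\simeq S$ is an autoequivalence). What \emph{is} standard is that $\ho(\icKK^G)$ agrees with the ordinary Gabriel--Zisman localization $\CstAlg^s_G[W^{-1}]$; but identifying that ordinary localization with $\KK^G$ is not a formal consequence of the universal property of $\kk^G$. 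The universal property hands you a comparison functor $\CstAlg^s_G[W^{-1}]\to\KK^G$ (since $\kk^G$ inverts $W$), but to invert it you must know that the localization functor $\CstAlg^s_G\to\CstAlg^s_G[W^{-1}]$ lands in an additive category and is homotopy invariant, stable, and split-exact --- and the paper extracts precisely these properties from the stability of $\icKK^G$. So you cannot use the identification to prove Bott periodicity without assuming what you are trying to establish. For the same reason, even granting your Toeplitz-extension construction of a candidate morphism $\tilde\beta_A\colon S^2A\to A$, you have no way to conclude it is an equivalence: conservativity of $\ho_0$ is not available at this stage.

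The paper's proof is designed to sidestep exactly this issue: it never needs to detect equivalences in $\icKK^G$ by passing to $\KK^G$. Instead it exhibits $S^2A$ and $\cK\otimes A$ as the two endpoint fibers of a single $C([0,1])$-algebra $C=C_0([0,1]\times\dR,A)\rtimes\dR$ (a deformation in the spirit of the tangent groupoid), and invokes~\cite{MR2511635}*{Proposition 2.2} to see that both evaluation maps $C\to C_0$ and $C\to C_1$ are honest $*$-homomorphisms lying in the class $W$ of $\KK^G$-equivalences. Such morphisms are invertible in $\icKK^G$ \emph{by the very definition} of the Dwyer--Kan localization, so the zigzag $S^2A\leftarrow C\rightarrow \cK\otimes A\leftarrow A$ gives the desired natural equivalence with no appeal to $\ho_0$. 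If you want to rescue your approach, you would need to replace step (ii) by an argument of this kind: realize the Bott element by a zigzag whose constituent arrows are individually $*$-homomorphisms in $W$, rather than by a general morphism of $\icKK^G$ whose invertibility you then try to test downstairs in $\KK^G$.
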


\begin{proof}
Let us consider the $C([0, 1])$-algebra $B = C_0([0, 1] \times \dR; A)$.
We consider the action $\alpha$ of $\dR$ on $B$ defined by
\[
\alpha_t(f(x, y)) = f(x, y + x t),
\]
and the associated crossed product $C = B \rtimes \dR$.
Then $C$ is still a $C([0, 1])$-algebra, and its fiber $C_0$ at $x = 0$ can be identified with $C_0(\dR^2) \otimes A \cong S^2 A$, while the fiber $C_x$ at any $x > 0$ can be identified with $C_0(\dR) \rtimes \dR \otimes A \cong \cK \otimes A$, with $\cK = \cK(\ell^2(\dN))$.

By~\cite{MR2511635}*{Proposition 2.2}, the evaluation maps $C \to C_x$ are all $\KK^G$-equivalences, hence isomorphisms in $\icKK^G$.
\end{proof}

\begin{proof}[Proof of Theorem~\ref{thm:kkg-infty-stable-tri}]
Lemmas~\ref{lem:cat-fib-objs} and~\ref{lem:cat-fib-obj-extra-factoriz-prop} imply that $(\CstAlg_s^G, \cW, \cF)$ is an $\infty$-category with weak equivalences and fibrations in the sense of~\cite{cis:ha}*{Definition 7.4.12}.
By~\cite{cis:ha}*{Proposition 7.5.6}, the canonical $\infty$-functor $L_\cW \colon \CstAlg_s^G \to \icKK^G$ is \emph{left-exact}, meaning that fibrations go to fibrations and pullbacks of the form~\eqref{eq:pullback} with $f \in \cF$ are mapped to a pullback diagram in the $\infty$-category $\icKK^G$.
Moreover, $\icKK^G$ has finite limits again by~\cite{cis:ha}*{Proposition 7.5.6}.

By~\cite{lur:ha}*{Corollary 1.4.2.27}, $\icKK^G$ will be a stable $\infty$-category if the loop functor $\Omega$, characterized by the pullback diagram
\[
\begin{tikzcd}
  \Omega A \arrow[r] \arrow[d] & 0 \arrow[d] \\
  0 \arrow[r] & A
\end{tikzcd}
\]
in $\icKK^G$, is an autoequivalence.
By the left exactness of $L_\cW$, we have a pullback diagram of the form
\[
\begin{tikzcd}
S A \arrow[r] \arrow[d] & 0 \arrow[d] \\
0 \arrow[r] & A,
\end{tikzcd}
\]
in $\icKK^G$, see Remark~\ref{rem:suspension-as-pullback}, hence $S A$ is a model of $\Omega A$.
By Lemma~\ref{lem:bott-periodicity}, $\Omega$ is indeed an autoequivalence.

Now, we know that $\icKK^G$ is stable, and moreover if a morphism $f \colon A \to B$ of $\CstAlg_s^G$ is in the class $\cF$, then the pullback diagram
\[
\begin{tikzcd}
  I \arrow[r] \arrow[d] & A \arrow[d, "f"] \\
  0 \arrow[r] & B
\end{tikzcd}
\]
with $I = \ker f$ in $\CstAlg_s^G$ goes to a pullback diagram in $\icKK^G$.
This means that, in the homotopy category $\ho(\icKK^G)$, we have a long exact sequence of the form
\[
\cdots \ho(\icKK^G)(Z, S B) \to \ho(\icKK^G)(Z, I) \to \ho(\icKK^G)(Z, A) \to \ho(\icKK^G)(Z, B) \to \cdots
\]
for any object $Z$.
Thus, the canonical functor $\CstAlg_s^G \to \ho(\icKK^G)$ is split (semi-)exact.
It is moreover homotopy invariant and stable, since these conditions can be captured by $\KK^G$-equivalence homomorphisms.
By the universality of $\KK^G$, we obtain a functor $\KK^G \to \ho(\icKK^G)$, which is the desired inverse of $\ho_0$.

It remains to compare the triangulated category structures.
A morphism in $\KK^G$ can be represented by an equivariant $*$-homomorphism $f \colon A \to B$ up to replacing objects by $\KK^G$-equivalences, see~\cite{laf:kkban}*{(proof of) Théorème A.2.2}.
Lemma~\ref{lem:cat-fib-obj-extra-factoriz-prop} implies that $f$ can be taken from the class $\cF$.
Then the kernel $I = \ker f$ is $\KK^G$-equivalent to the mapping cone of $f$.
This implies that the image of exact triangles of $\ho(\KK^G)$, which come from fibration sequences, contain the exact triangles of $\KK^G$, which are given by mapping cones.
In a triangulated category, any two exact triangles containing a given morphism must be isomorphic, hence we obtain the desired equivalence as triangulated categories.
\end{proof}

\subsection{Geometric realization}\label{subsec:georeal}

\begin{proposition}[\cite{bel:kkstable}*{Lemma 2.19}]
The stable $\infty$-category $\icKK^G$ admits countable colimits.
\end{proposition}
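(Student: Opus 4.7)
The plan is to reduce the problem to constructing countable coproducts in $\icKK^G$. By Theorem~\ref{thm:kkg-infty-stable-tri} this $\infty$-category is stable, so finite colimits are automatic; a standard argument then shows that an $\infty$-category with finite colimits admits all countable colimits as soon as it admits countable coproducts. Given a countable family of separable $G$-C$^*$-algebras $(A_n)_{n \in \dN}$, the natural candidate for the coproduct is the $C_0$-direct sum
\[
A = \bigoplus_n A_n = \Bigl\{(a_n) \in \prod_n A_n \,\Big|\, \norm{a_n} \to 0\Bigr\},
\]
which is again a separable $G$-C$^*$-algebra and carries canonical equivariant inclusions $\iota_n \colon A_n \to A$.

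To verify that $(A, (\iota_n))$ defines a coproduct in $\icKK^G$, I would show that the induced map $\map_{\icKK^G}(A, B) \to \prod_n \map_{\icKK^G}(A_n, B)$ is an equivalence of spectra for every object $B$. Since products of spectra commute with $\pi_*$, this reduces to the countable additivity statement $\KK^G_*(A, B) \cong \prod_n \KK^G_*(A_n, B)$ at the level of the triangulated category. Finite additivity follows from split-exactness of $\KK^G(-,B)$ applied to the split extension $0 \to \bigoplus_{n < N} A_n \to \bigoplus_{n \le N} A_n \to A_N \to 0$. For the countable case, I would present $A$ as a sequential homotopy colimit of the finite partial sums $B_N = \bigoplus_{n \le N} A_n$, modeled concretely by a mapping telescope inside $A$, and use the associated cofiber triangle to produce a Milnor short exact sequence
\[
0 \to {\lim}^1_N \KK^G_{*+1}(B_N, B) \to \KK^G_*(A, B) \to \lim_N \KK^G_*(B_N, B) \to 0.
\]
The transition maps of the inverse system $(\prod_{n \le N}\KK^G_*(A_n, B))_N$ are split surjections, so the $\lim^1$ term vanishes and the remaining limit is $\prod_n \KK^G_*(A_n, B)$. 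Once countable coproducts exist, the same telescope triangle, now read inside $\icKK^G$ itself, realizes sequential $\infty$-colimits, which together with finite colimits give all countable colimits.

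The main obstacle is a potential circularity: the telescope argument for the sequential colimit already uses a countable coproduct. To sidestep this I would first establish the $\KK^G$-level additivity identity by purely triangulated-categorical means, following Meyer--Nest, whose arguments rely only on split-exactness together with an exhaustion argument showing that any class in $\KK^G_*(A, B)$ killed by every $\iota_n^*$ must itself vanish. With this in hand, the passage from $\KK^G$ to $\icKK^G$ is automatic: the map between mapping spectra is a $\pi_*$-isomorphism and therefore an equivalence of spectra, which yields the desired universal property.
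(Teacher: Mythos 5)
Your proof follows essentially the same route as the paper: reduce to countable coproducts via stability, take the $c_0$-direct sum as the candidate, and detect the universal property on the homotopy groups of the mapping spectra, which reduces everything to the countable additivity of $\KK^G$ in the first variable --- exactly the ``standard fact'' the paper invokes. The telescope/Milnor-sequence detour is, as you yourself observe, circular (the telescope already presupposes a countable coproduct) and should simply be dropped; note also that the ``exhaustion'' argument you fall back on only addresses injectivity of $\KK^G_*(\bigoplus_n A_n, B) \to \prod_n \KK^G_*(A_n, B)$, whereas the standard cycle-level proof (assembling a family of Kasparov cycles into one over the direct-sum Hilbert module) gives both halves at once.
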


\begin{proof}
By stability, it is enough to show that it admits countable coproducts.
These can be detected in the homotopy category, so the claim follows from the standard fact that $\KK^G$ admits countable coproducts, given by the countable direct sum.
\end{proof}

\begin{corollary}
Let $A_\bullet = (A_n)_{n \in \dN}$ be a simplicial system of objects in $\icKK^G$, given by an $\infty$-functor from (the nerve of) $\Delta^\opo$ to $\icKK^G$.
Then the colimit $\colim_{\Delta^\opo} A_\bullet$ exists in $\icKK^G$.
\end{corollary}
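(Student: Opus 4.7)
The plan is to deduce the existence of $\colim_{\Delta^{\opo}} A_\bullet$ from the previous proposition, combined with the stability of $\icKK^G$ (Theorem~\ref{thm:kkg-infty-stable-tri}).

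First, I would invoke the general $\infty$-categorical fact that an $\infty$-category admits all $\kappa$-small colimits provided it admits $\kappa$-small coproducts together with pushouts; see~\cite{lur:ht}*{Proposition 4.4.2.7}.  In our setting we may take $\kappa = \aleph_1$: by the preceding proposition, $\icKK^G$ admits countable coproducts, and by stability (Theorem~\ref{thm:kkg-infty-stable-tri}) it admits all finite colimits, including pushouts.  Hence $\icKK^G$ admits all countable colimits.

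Next, I would observe that $\colim_{\Delta^{\opo}} A_\bullet$ is such a countable colimit.  The simplicial indexing $(\infty, 1)$-category $N(\Delta^{\opo})$ has countably many simplices in each dimension (since $\Delta$ has countably many objects and countably many morphisms), so it is essentially countable.  Consequently, the colimit of any $\infty$-functor $A_\bullet \colon N(\Delta^{\opo}) \to \icKK^G$ exists in $\icKK^G$.  Alternatively, one could build this colimit by hand via the skeletal filtration $\gr{A_\bullet}_n = \colim_{\Delta^{\opo}_{\leq n}} A_\bullet$: each passage $\gr{A_\bullet}_{n-1} \to \gr{A_\bullet}_n$ is a pushout against a finite colimit built from the $n$-th latching object, and the full geometric realization is the sequential colimit $\colim_n \gr{A_\bullet}_n$, which exists because countable sequential colimits are a special case of countable colimits.

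There is no real obstacle here; the content of the statement is subsumed by the previous proposition and the stability of $\icKK^G$, together with the generalities on $\infty$-categorical colimits.  The only point that requires mild care is verifying that pushouts are indeed part of the colimit structure inherited from stability (rather than only pullbacks), but this is automatic in a stable $\infty$-category where finite limits and finite colimits coincide up to suspension/loop shifts.
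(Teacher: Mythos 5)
Your argument is correct and is exactly the (implicit) route the paper takes: the preceding proposition already supplies countable colimits in $\icKK^G$ (via stability reducing the question to countable coproducts), and the corollary follows because $N(\Delta^\opo)$ is a countable simplicial set. Your elaboration via Lurie's criterion and the skeletal filtration is a faithful unpacking of the same proof, not a different one.
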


\begin{definition}
We call $\colim_{\Delta^\opo} A_\bullet$ the \emph{geometric realization} of $A_\bullet$.
\end{definition}

An important example of countable colimit is the sequential colimit $A = \varinjlim A_n$ associated to an inductive system $\{A_n\}_{n\in\dN}$ in $\icKK^G$.
For example, the geometric realization $\colim_{\Delta^\opo} A_\bullet$  above can be written as a sequential colimit $\varinjlim D_n$, where $D_n = \colim_{\Delta^\opo_{\le n}} A_\bullet$ is the colimit of $A_\bullet$ restricted to the subcategory of $\Delta^\opo$ spanned by the objects $[m] $ for $m \le n$, i.e., the $n$-skeleton of $A_\bullet$.

\begin{proposition}[cf.~\cite{nestmeyer:loc}*{Section 2.4}]\label{prop:indinfcolim}
Let $(A_n)_n$ be an inductive system of objects, given by an $\infty$-functor $\dN \to \icKK^G$.
Its colimit can be computed in $\KK^G$ as the mapping cone of the map $\bigoplus_n A_n\to\bigoplus_n A_n$ given by $\id-\sigma$, where $\sigma$ is the diagonal action of the connecting maps $A_n \to A_{n + 1}$.
\end{proposition}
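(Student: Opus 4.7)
The plan is to deduce the statement from the standard presentation of sequential $\infty$-colimits as cofibers in any stable $\infty$-category with countable coproducts, then translate across the equivalence $\ho(\icKK^G) \simeq \KK^G$ of Theorem~\ref{thm:kkg-infty-stable-tri}.

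First, I would establish in abstract form that, for any stable $\infty$-category $C$ admitting countable coproducts and any $\infty$-functor $A_\bullet \colon \dN \to C$, the colimit fits into a cofiber sequence
\begin{equation*}
\bigoplus_{n} A_n \xrightarrow{\id - \sigma} \bigoplus_{n} A_n \to \colim_n A_n,
\end{equation*}
where $\sigma$ is induced by the structure maps $A_n \to A_{n+1}$. To verify this, I would test against an arbitrary object $B \in C$. Using that mapping out of a coproduct is a product and that cofibers are turned into fibers by contravariant $\map(-,B)$, the cofiber $C_\sigma$ of $\id - \sigma$ satisfies
\begin{equation*}
\map_C(C_\sigma, B) \simeq \mathrm{fib}\Bigl(\prod_n \map_C(A_n, B) \xrightarrow{\id - \sigma^*} \prod_n \map_C(A_n, B)\Bigr).
\end{equation*}
On the other hand, the sequential $\infty$-limit $\lim_n \map_C(A_n, B)$ admits exactly this presentation as a fiber (this is the usual telescope/Milnor presentation, cf.~\cite{lur:ha}*{Remark 1.2.4.4 and Proposition 1.2.4.5}). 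By the universal property of $\colim_n A_n$ and the Yoneda lemma, $C_\sigma$ is canonically identified with $\colim_n A_n$.

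Next, I would apply this to $C = \icKK^G$, which is stable by Theorem~\ref{thm:kkg-infty-stable-tri} and admits countable coproducts as noted in the preceding proposition. The coproduct in $\icKK^G$ is detected in $\KK^G$, where it is given by the countable direct sum of C$^*$-algebras. Under the equivalence of triangulated categories $\ho_0 \colon \ho(\icKK^G) \to \KK^G$, cofiber sequences correspond to mapping cone triangles, since $\ho_0$ is an equivalence of triangulated categories and the distinguished triangles of $\KK^G$ are the mapping cone triangles. Thus the cofiber sequence above becomes a mapping cone triangle in $\KK^G$, yielding the claimed identification of $\colim_n A_n$ with the mapping cone of $\id - \sigma$ on $\bigoplus_n A_n$.

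The only point requiring care is the abstract identification of $\colim_n A_n$ with the cofiber of $\id - \sigma$; this is the $\infty$-categorical analogue of the classical telescope construction and is well documented. Beyond this, no new difficulty arises compared with the non-equivariant case handled in~\cite{bel:kkstable}*{Lemma 2.19} and the triangulated-categorical treatment of~\cite{nestmeyer:loc}*{Section 2.4}, from which the present statement may also be read off once stability and the equivalence $\ho(\icKK^G) \simeq \KK^G$ are in hand.
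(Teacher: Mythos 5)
Your proof is correct, but it takes a genuinely different route from the paper. You argue by representability: mapping out of the cofiber of $\id-\sigma$ into an arbitrary test object $B$ gives the fiber of $\id-\sigma^*$ on $\prod_n \map(A_n,B)$, which you identify with $\lim_n \map(A_n,B)$ via the spectrum-level Milnor/telescope presentation of sequential limits, and then you conclude by Yoneda and transport the result to $\KK^G$ along $\ho_0$. The paper instead works covariantly and diagrammatically: it observes that the mapping cone of $\id-\sigma$ is the coequalizer of the pair $(\id,\sigma)$ on $\bigoplus_n A_n$, rewrites that coequalizer as a colimit over a single auxiliary poset $I$ (a Fubini/Grothendieck-construction manipulation), and then shows the projection $I \to \dN$ is cofinal, invoking \cite{lur:ht}*{Proposition 4.1.1.8}. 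Your approach buys familiarity --- it reduces everything to the well-known dual telescope presentation in spectra and the spectral enrichment of stable $\infty$-categories --- but it quietly shifts the real content onto that presentation of $\lim_n$, which itself is usually proved by exactly the kind of cofinality argument the paper makes explicit; the paper's version is self-contained at the level of colimit diagrams and never needs to pass through mapping spectra. One small caveat: your citation of \cite{lur:ha}*{Remark 1.2.4.4 and Proposition 1.2.4.5} does not appear to point at the telescope presentation (Section 1.2.4 of that reference concerns simplicial objects and the $\infty$-categorical Dold--Kan correspondence, which is how the present paper uses it elsewhere), so you should either supply the correct reference or include the short cofinality argument yourself. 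The final translation to $\KK^G$ via the triangulated equivalence $\ho_0$ is handled correctly and matches what the paper implicitly does.
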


\begin{proof}
The equivalence between these two models goes as follows: the mapping cone is the coequalizer of the pair $(\id,\sigma)$ between the coproducts $\bigoplus_n A_n$.
This colimit can be rewritten as a colimit over a single category $I$, given by the poset with pairs $(i,j), i\geq 0, j\in\{0,1\}$ as objects, and morphisms $(i,j)\to (k,l)$ whenever $j=0,l=1$ and $i\leq k \leq i+1$.
The canonical projection $I\to \dN$ is cofinal, which implies the claim by~\cite{lur:ht}*{Proposition 4.1.1.8}.
\end{proof}

It is clear from the proof that Proposition~\ref{prop:indinfcolim} holds for any stable $\infty$-category (in place of $\icKK^G$) and its homotopy category with its natural triangulated structure (in place of $\KK^G$).

\section{Transfer maps}\label{sec:transfer-maps}

\subsection{\texorpdfstring{$\infty$}{Infinity}-functor on the category of proper correspondences}\label{app:alt-proof}

In this section we provide a construction of the map $\PrCorr^G \to \icKK^G$ introduced in Proposition~\ref{prop:simplicially-enriched-model-of-atiyah-transfer}.

\subsubsection{Additional preliminaries}

We also consider \emph{simplicially enriched categories}, that is, systems $\cC$ given by objects $X$, $Y$, \dots and simplicial sets $\cC_\bullet(X, Y)$ of ``morphisms'' together with composition maps
\[
\cC_n(Y, Z) \times \cC_n(X, Y) \to \cC_n(X, Z)
\]
compatible with the face and degeneracy maps, satisfying associativity and existence of units $\id_X \in \cC_0(X, X)$. If $\cC$ is a usual category, we take the constant simplicial set $\cC_n(X, Y) = \cC(X, Y)$ to treat $\cC$ as a simplicially enriched category.

Simplicially enriched categories $\cC$ whose morphism spaces $\cC_\bullet(X, Y)$ are Kan complexes provide another model of $(\infty, 1)$-categories. More precisely, there is a model structure on the category of simplicially enriched categories~\cite{berg:model}, and this category is Quillen equivalent to the Joyal model category of simplicial sets; see~\cite{lur:ht}*{Section 2.2} for a detailed proof.

Given a category $\cC$ enriched in Kan complexes, the corresponding quasicategory is given by the \emph{homotopy coherent nerve} $N^\hc(\cC)$. Let us quickly review the construction (see for example~\cite{land:inftybook}*{Definition 1.2.63}).
We denote the simplicial path category by $\Path[n]$, which has integers $0 \le k \le n$ as its objects and the nerve of partially ordered set $\{ I = \{ i = i_0 < i_1 < \cdots < i_m = j \} \}$ (with ordering given by the opposite of inclusion order) as the space of morphisms from $i$ to $j$ (which is empty if $j < i$).
The composition $\Mor_{\Path[n]}(j, k) \times \Mor_{\Path[n]}(i, j) \to \Mor_{\Path[n]}(i, k)$ is induced by the union of corresponding sets $I = \{ i = i_0 < i_1 < \cdots < i_m = j \}$ and $J = \{ j = j_0 < j_1 \cdots < j_l = k\}$.
Then $N^\hc_n(\cC)$ is given as the set of functors of simplicially enriched categories $\Path[n] \to \cC$.
At the level of $0$-cells in morphism spaces, compatibility with composition implies that the value of functors on the sets $I = \{i < j \}$ determines the value on the general case $I = \{ i = i_0 < i_1 < \cdots < i_m = j \}$.

If we start from an ordinary category $\cC$ with a collection of morphisms $\cW$ containing all identity morphisms, we get a model of $\infty$-localization given by a simplicially enriched category, called the \emph{hammock localization} $L^H(\cC,\cW)$ (also called simplicial or Dwyer--Kan localization)~\cite{dwkan:function}defined as follows.

Given two objects $X$ and $Y$ in $\cC$, we look at the zigzags of the form
\[
\begin{tikzcd}[column sep=small]
  X \arrow[r] & Z_1 & Z_2 \arrow[l] \arrow[r] & \cdots & Z_k \arrow[l] \arrow[r] & Y,
\end{tikzcd}
\]
where the leftward arrows are in $\cW$.
Two such zigzags are identified by removing leftward arrows when they are identity morphisms, and then composition of consecutive rightward arrows.
We consider the category of such equivalence classes, where a morphism is represented by a commutative diagram of the form
\[
\begin{tikzcd}[row sep=tiny, column sep=small]
   & Z_1 \arrow[dd] & Z_2 \arrow[l] \arrow[r] \arrow[dd] & \cdots & Z_k \arrow[l] \arrow[rd] \arrow[dd] & \\
  X \arrow[ru] \arrow[rd] & & & & & Y, \\
   & Z'_1 & Z'_2 \arrow[l] \arrow[r] & \cdots & Z'_k \arrow[l] \arrow[ru] &
\end{tikzcd}
\]
where all vertical arrows are morphisms in $\cW$.
Then the simplicial set $\Mor_{L^H(\cC, \cW)}(X, Y)$ is defined as the nerve of this category.

To get an $(\infty, 1)$-category, we apply Kan's $\Ex$ functor~\cite{kan:ex} infinitely many times to these simplicial sets and consider the category enriched in Kan complexes given by $\Ex_\infty \Mor_{L^H(\cC, \cW)}(X, Y)$. 

The equivalence between these two approaches, that is the comparison between the quasicategory $N(\cC)[\cW^{-1}]$ and the simplicially enriched category $\Ex_\infty L^H(\cC,\cW)$, is given by~\cite{hin:dwkanrev}*{Proposition 1.2.1}.

\subsubsection{Construction of transfer (proof of Proposition~\ref{prop:simplicially-enriched-model-of-atiyah-transfer})}

To simplify the notation we drop $G$ in the following discussion.
All constructions directly carry over to the equivariant case given the following points:
when $A$ is a $G$-C$^*$-algebra and $E_A$ is a right $G$-$A$-Hilbert module, the C$^*$-algebra $\cK(E_A)$ is also a $G$-C$^*$-algebra.
Moreover, if $E_A \to E_A \oplus E'_A$ is a canonical embedding, the induced homomorphism $\cK(E_A) \to \cK(E_A \oplus E'_A)$ is a $G$-equivariant homomorphism.

Recall that the simplicially enriched category $\Ex(L^H(\CstAlg_s, \cW_{\KK}))$ is the result of applying Kan's $\Ex$ functor to the simplicial sets of morphisms for the Hammock localization $L^H(\CstAlg_s, \cW_{\KK})$.
This is an intermediate step in replacing the morphism spaces of $L^H(\CstAlg_s, \cW_\KK)$ by Kan complexes, so that the homotopy coherent nerve gives a quasicategory representing the $\infty$-category $\icKK$.

We thus want to show that there is a map of simplicial sets
\[
T_\icKK\colon \ND (\PrCorr) \to N^{\hc}(\Ex(L^H(\CstAlg_s, \cW_{\KK}))).
\]
Recall that the simplicial sets $L^H(\CstAlg_s, \cW_{\KK})(A, B)$ are just the nerve of some category $\cC_{A, B}$.
Then the elements of $\Ex(L^H(\CstAlg_s, \cW_{\KK}))_n(A, B)$ are diagrams in $\cC_{A,B}$ of shape given by barycentric division of the $n$-simplex.
For example, a $1$-cell is given by a zigzag of hammocks $H_0 \to H_{01} \leftarrow H_1$, while a $2$-cell is given by a commutative diagram of the form
\begin{equation}\label{eq:hammock-diag-2}
\begin{tikzcd}[column sep=tiny, row sep=small]
& & H_1 \arrow[ld] \arrow[rd] & & \\
& H_{01} \arrow[r] & H_{012} & H_{12} \arrow[l]\\
H_0 \arrow[ru] \arrow[rr] & & H_{02} \arrow[u] & & H_2 \arrow[ll] \arrow[lu]
\end{tikzcd}
\end{equation}
representing composition of such zigzags.

Now, suppose that we have an element in $\ND_n(\PrCorr)$.
Unpacking the definition, it is given by a sequence of C$^*$-algebras $(A_i)_{i = 0}^n$, proper nondegenerate correspondences ${}_{A_{i}} E^{i, j}_{A_j}$ for $0 \le i < j \le n$, and a family of isomorphisms $E^{i, j} \otimes_{A_j} E^{j, k} \to E^{i, k}$ for $i < j < k$ satisfying the standard associativity condition.
Let us write $A_0 = A$, $A_n = B$, and $E^i = E^{i-1, i}$.

To describe its image in the $n$-th homotopy coherent nerve of $\Ex(L^H(\CstAlg_s, \cW_{\KK}))$, we need to produce a simplicial functor $F \colon \Path[n] \to \Ex(L^H(\CstAlg_s, \cW_{\KK}))$.
At the level of objects, we set $F(i) = A_i$.
At the level of morphisms, for the $0$-cell of $\Mor_{\Path[n]}(i, j)$ represented by $I = \{i, j\}$, we take the zigzag
\[
F(I) = A_i \to \cK_{A_j}(A_j \oplus E^{i,j}) \leftarrow A_j,
\]
where $A_i$ acts trivially (by $0$) on the summand $A_j \subset A_j \oplus E^{i, j}$, while the algebra $A_j$ embeds into $\cK_{A_j}(A_j \oplus E^{i,j})$ as $\cK_{A_j}(A_j)$.
By compatibility with composition, for a general $0$-cell $I = \{i = i_0 < \cdots < i_m = j \}$ in $\Mor_{\Path[n]}(i, j)$ the value of $F(I)$ should be given by
\begin{equation}\label{eq:zigzag-for-I}
A_i \to \cK_{A_{i_1}}(A_{i_1} \oplus E^{i, i_1}) \leftarrow A_{i_1} \to \cK_{A_{i_2}}(A_{i_2} \oplus E^{i_1, i_2}) \leftarrow \cdots \to \cK_{A_j}(A_j \oplus E^{i_{m-1},j}) \leftarrow A_j.
\end{equation}

Let us present the value of $F$ on higher cells.
A $k$-cell in the simplicial set $\Mor_{\Path[n]}(i, j)$ is given by a decreasing sequence of subsets $I_0 \supset I_1 \supset \cdots \supset I_k$ such that $\{i, j\} \subset I_l \subset [i, j]$.
We need to give a hammock diagram of zigzag objects $H_{l_1 l_2 \cdots l_m}$ fitting into diagrams of the form~\eqref{eq:hammock-diag-2}, such that $H_l$ is the zigzag~\eqref{eq:zigzag-for-I} for $I_l$.

We construct zigzags $H_{l_1 l_2}$, and in general set $H_{l_1 l_2 \cdots l_m} = H_{l_1 l_m}$.
Let us fix $l_1 < l_2$.
The inclusion $I_{l_2} \subset I_{l_1}$ defines subsets $P_{1}$, \dots, $P_{m}$ of $I_{l_1}$, as the intersection of $I_{l_1}$ and the intervals $[b_{x}, b_{x + 1}]$, with endpoints $b_x,b_{x+1}$ from $I_{l_2} = \{ b_0 < b_1 < \cdots < b_a\}$.
Let us write $P_i = \{ a^i_0 < \cdots < a^i_p \}$, and set $H_{l_1 l_2}^{(i)}$ to be the zigzag
\begin{multline}\label{eq:zigzag-for-block-Pi}
A_{a^i_0} \to \cK(A_{a^i_1} \oplus E^{a^i_0, a^i_1}) \to \cK(A_{a^i_2} \oplus E^{a^i_1,a^i_2} \oplus E^{a^i_0,a^i_1} E^{a^i_1,a^i_2}) \to \cdots\\
\to \cK(A_{a^i_p} \oplus E^{a^i_{p-1}, a^i_p} \oplus \cdots \oplus E^{a^i_0, a^i_1} \cdots E^{a^i_{p-1}, a^i_p}) \leftarrow A_{a^i_p},
\end{multline}
where we omitted the relative tensor product symbol.
In the second step, we let $\cK(A_{a^i_1} \oplus E^{a^i_0, a^i_1})$ act on the Hilbert $A_{a^i_2}$-module
\[
E^{a^i_1,a^i_2} \oplus E^{a^i_0,a^i_1} E^{a^i_1,a^i_2} = (A_{a_1^i} \oplus E^{a_0^i, a_1^i}) E^{a^i_1,a^i_2}
\]
using the induced action via tensor product with $E^{a^i_1, a^i_2}$.
To be precise, we interpret the above as a zigzag by formally adding leftward arrows $\cK(A_1 \oplus E^1) \leftarrow \cK(A_1 \oplus E^1)$, $\cK(A_2 \oplus E^2 \oplus E^1 E^2) \leftarrow \cK(A_2 \oplus E^2 \oplus E^1 E^2)$ and so on, given by the identity morphisms.
Note also that the defining relation of hammock localization implies that the above is equal to
\begin{equation}\label{eq:zigzag-for-block-simplified}
A_{a_0^i} \to \cK(A_{a^i_p} \oplus E^{a^i_{p-1}, a^i_p} \oplus \cdots \oplus E^{a^i_0, a^i_1} \cdots E^{a^i_{p-1}, a^i_p}) \leftarrow A_{a^i_p},
\end{equation}
where $A_{a_0^i}$ acts on the summand $E^{a^i_0, a^i_1} \cdots E^{a^i_{p-1}, a^i_p}$.
The zigzag $H_{l_1 l_2}$ is the composition of the $H^{(i)}_{l_1 l_2}$.

Let us describe the map $H_{l_1} \to H_{l_1 l_2}$.
By compatibility with composition, it is enough to describe it for each block $P_i$.
We then need to specify a map of zigzags from one of the form~\eqref{eq:zigzag-for-I}, with indexes $a^i_q$, to~\eqref{eq:zigzag-for-block-Pi}.
For each $q$, we can naturally embed $\cK(A_{a^i_q} \oplus E^{a^i_{q-1}, a^i_q})$ and $A_{a^i_q}$ to the first corners of $\cK(A_{a^i_q} \oplus E^{a^i_{q-1}, a^i_q} \oplus \cdots \oplus E^{a^i_0, a^i_1} \cdots E^{a^i_{q-1}, a^i_q})$, which defines the desired map.

To define $H_{l_2} \to H_{l_1 l_2}$, we use the isomorphism $E^{a^i_0, a^i_p} \to E^{a^i_0, a^i_1} \cdots E^{a^i_{q-1}, a^i_q}$ from the Duskin nerve structure to map the zigzag $A_{a^i_0} \to \cK(A_{a^i_p} \oplus E^{a^i_0, a^i_p}) \leftarrow A_{a^i_p}$ into~\eqref{eq:zigzag-for-block-simplified}.

Next let us describe the maps $H_{l_1 \cdots \hat{l}_j \cdots l_m} \to H_{l_1 \cdots l_m}$.
There are three cases depending on the value of the omitted index $j$.
In case of $1 < j < m$, we have $H_{l_1 \cdots \hat{l}_j \cdots l_m} = H_{l_1 l_m} = H_{l_1 \cdots l_m}$, hence we can take the identity map.
For $j = 1$, we need to give a map $H_{l_2 l_m} \to H_{l_1 l_m}$, while for $j = m$, we need to give a map $H_{l_1 l_{m-1}} \to H_{l_1 l_m}$.

The former case, $j = 1$, corresponds to having subsets $Q_i \subset P_i$, such that $P_i = \{ a^i_0 < \cdots < a^i_p \}$ is the intersection of $I_{l_1}$ with the interval with endpoints from $I_{l_m}$, and $Q_i = \{a^i_0 = b_0 < b_1 < \cdots < b_q = a^i_p \}$ is the intersection of $P_i$ with $I_{l_2}$.
Then we need to give a map from the zigzag
\[
A_{b_0} \to \cK(A_{b_q} \oplus E^{b_{q-1}, b_q} \oplus \cdots \oplus E^{b_0, b_1} \cdots E^{b_{q-1}, b_q}) \leftarrow A_{b_q}
\]
to~\eqref{eq:zigzag-for-block-simplified}.
We can do this by using the isomorphisms $E^{b_x, b_{x+1}} \cdots E^{b_{q-1}, b_q} \to E^{a^i_y, a^i_{y+1}} \cdots E^{a^i_{p-1}, a^i_p}$ from the Duskin nerve.

The latter case, $j = m$, corresponds to having a division $P_i = R^i_0 \cup \cdots \cup R^i_{r_i}$ into smaller intervals whose endpoints are from $I_{m-1}$.
Again the construction will be a composition of the ones on each $P_i$, so let us omit the index $i$ and write $P_i = \{ a'_0 < \cdots < a'_p\}$, $R^i_{j} = \{b'_{j, 0} < \cdots < b'_{j, k_j}\}$, so that we have $b'_{j, k_j} = b'_{j+1,0}$.
The hammock $H_{l_1 l_{m-1}}$ is a composite of
\begin{multline}\label{eq:R-j-block-hammock}
A_{b'_{j,0}} \to \cK(A_{b'_{j,1}} \oplus E^{b'_{j,0} b'_{j,1}}) \to \cdots \to \cK(A_{b'_{j,k_j}} \oplus E^{b'_{j,k_j-1} b'_{j,k_j}} \oplus \cdots \oplus E^{b'_{j,0} b'_{j,1}} \cdots E^{b'_{j,k_j-1} b'_{j,k_j}}) \\ 
\leftarrow A_{b'_{j,k_j}}.
\end{multline}
We want to construct a map of hammock into~\eqref{eq:zigzag-for-block-Pi}.
The hammock~\eqref{eq:R-j-block-hammock} at $j = 0$, except for the last term $A_{b'_{0,k_0}}$, matches the first part of~\eqref{eq:zigzag-for-block-Pi}, hence we can use identity morphisms.

The term $A_{b'_{0, k_0}} = A_{b'_{1, 0}}$ maps to the corresponding term of~\eqref{eq:zigzag-for-block-Pi} (namely, $\cK(A_{b'_{0,k_0}} \oplus E^{b'_{0,k_0-1} b'_{0,k_0}} \oplus \cdots \oplus E^{b'_{0,0} b'_{0,1}} \cdots E^{b'_{0,k_0-1} b'_{0,k_0}})$ with the identity arrow pointing left) by the first corner embedding, to have a map of hammocks.
The next term of~\eqref{eq:R-j-block-hammock} is $\cK(A_{b'_{1,1}} \oplus E^{b'_{1,0} b'_{1,1}})$ can be embedded in the first two corners of
\[
\cK(A_{b'_{1,1}} \oplus E^{b'_{1,0} b'_{1,1}} \oplus E^{b'_{0,k_0-1} b'_{0,k_0}} E^{b'_{1,0} b'_{1,1}} \oplus \cdots \oplus E^{b'_{0,0} b'_{0,1}} \cdots E^{b'_{0,k_0-1} b'_{0,k_0}} E^{b'_{1,0} b'_{1,1}}),
\]
which appears next in~\eqref{eq:zigzag-for-block-Pi}.
Continuing in the same way, we obtain a map $H_{l_1 l_{m-1}} \to H_{l_1 l_{m}}$.

In general, given a subsequence $l'_1, \ldots, l'_{m'}$ of $l_1, \ldots, l_m$, we can iterate the above construction, dropping one index at a time, to get a map of zigzags $H_{l'_1 \cdots l'_{m'}} \to H_{l_1 \cdots l_m}$. A standard bookkeeping argument shows this map is indeed well-defined, as the order in which indices are dropped does not affect the construction, which completes the construction.

\subsection{The functor \texorpdfstring{$\Map_c(-,\bE)$}{Map_c(-, E)} and sheaves}\label{subsec:mapcsheaves}

Let us give a topological model of $F^\bE$ classified by Theorem~\ref{thm:intro-thm-D}.
It will be convenient to work with CW-spectra.
Recall that a CW-spectrum is a topological spectrum $(\bE_n)_n$ such that each $\bE_n$ is a CW-complex and all structure maps are subcomplex inclusions. Any topological prespectrum receives a stable weak homotopy equivalence from a CW-spectrum~\cite{ekmm:modern}*{Theorem 1.5}. 

\begin{proposition}%\label{prop:compar-map-spec-profin}
Let $\bE_\bullet$ be a CW-spectrum.
Let $X$ be a totally disconnected compact space, and let $X = \varprojlim_n K_n$ be its presentation as a projective limit of finite spaces.
Then the natural map
\begin{equation}\label{eq:profin-dom-comparison}
\hocolim_n \Map(K_n, \bE_\bullet) \to \Map(X, \bE_\bullet)
\end{equation}
is a weak equivalence.
\end{proposition}

\begin{proof}
Let us fix $f \in \Map(X, \bE_n)$.
By the compactness of $X$, the image of $f$ is contained in a union of finite number of cells, $A = \bigcup_{i = 1}^k e_{\alpha_i}$.
Take a covering of $A$ by contractible relatively open sets $(A_j)_{j = 1}^N$, and take a partition of $X$ by clopen sets $X_j$ such that $f(X_j) \subset A_j$.
From this we see that $f$ is homotopic to a map with finite image.
This shows that the map~\eqref{eq:profin-dom-comparison} induces a surjective map of $\pi_0$-invariants.

A similar argument shows that any homotopy between $f_1, f_2 \in \Map(X, \bE_n)$ can be lifted to a homotopy in $\Map(K_m, \bE_n)$ for some big $m$, showing the injectivity of the map between $\pi_0$ induced by~\eqref{eq:profin-dom-comparison}.
The case of general $\pi_k$ can be reduced to the above by $\pi_k( \Map(X, \bE_\bullet))\cong \pi_0(\Map(X,\Omega^k\bE_\bullet))$ (note that $\Omega^k\bE_m$ has the homotopy type of a CW-complex).
\end{proof}

Let $Y$ be a second countable totally disconnected locally compact Hausdorff space.
Let us take an increasing sequence of open compact subspaces $(X_n)_{n = 1}^\infty$ in $Y$.
Given a topological spectrum $\bE_\bullet$, we define the \emph{compactly supported map spectrum} $\Map_c(Y, \bE_\bullet)$ to be the sequence of pointed spaces
\begin{equation}\label{eq:colimcomp}
\varinjlim_m \Map(X_m, \bE_n) \quad (n \in \dN),
\end{equation}
where the inductive limit can be considered strict (see Proposition~\ref{prop:trans-fun-spec}). Here, the connecting map $\Map(X_m, \bE_n) \to \Map(X_{m + 1}, \bE_n)$ is the ``extension by $0$'' map which sends $f$ to $\tilde{f}$, where $\tilde{f}$ sends the points of $X_{m + 1} \setminus X_m$ to the basepoint of $\bE_n$.
Note that this definition is independent of the choice of $(X_n)_n$.

\begin{proposition}\label{prop:trans-fun-spec}
The functor $Y \mapsto \Map_c(Y, \bE_\bullet)$ extends to a well-defined functor in the category $\Fun^{\times, cc}(\Span_{\pret}(\TDLC), \TopSpec)$. Hence $\Map_c(-, \bE_\bullet)$ provides a model for $F^{\bE}$.
\end{proposition}

\begin{proof}
This amounts to checking that, up to weak equivalence, we can replace the ordinary colimit with a homotopy colimit in~\eqref{eq:colimcomp}. Indeed, let $X_i$ be a sequential diagram of $T_1$-spaces with embedding bonding maps. Then for all $n\geq 0$, $\pi_n(\varinjlim_i X_i)\cong \varinjlim_i \pi_n(X_i)$ by~\cite{may:concise}*{Lemma on p.~67}. Since taking homotopy groups commutes with filtered homotopy colimits, the canonical map $\hocolim_i X_i \to \varinjlim_i X_i$ is a weak equivalence.
\end{proof}

It is not difficult to see that restricting $\Map(-,\bE)$ to the (opposite) category of compact totally disconnected spaces and continuous maps, viewed as a site with jointly surjective maps as covering, yields a sheaf in the sense of~\cite{lur:ht}*{Definition 6.2.2.6}. More precisely, if $X$ is a compact totally disconnected space, let us denote by $\underline{\bE}_X$ the sheafification of the constant presheaf on $X$ with value $\bE$. Then we have an equivalence of contravariant functors in $X$, 
\[
\Map(X,\bE)\cong \Gamma(X,\underline{\bE}_X)
\]
(the corresponding claim for sheaves of simplicial sets is proved in~\cite{lur:ht}*{Section 7.1}).
The unique extension to locally compact spaces is then equivalent to the compactly supported sections functor $X\mapsto \Gamma_c(X,\underline{\bE}_X)$, as defined in~\cite{lur:ha}*{Definition 5.5.5.9}.

Note that the descent property of sheaves, when restricted to the setting of totally disconnected spaces, is reduced to the product-preserving property we have imposed for the functors in Section~\ref{sec:span-tot-disc}. This suggests that, by imposing suitable descent conditions, one can lift the zero-dimensionality constraint and generalize the classification of functors from Section~\ref{sec:span-tot-disc} to the category $\Span_{\pret}(\LCH)$ of spans of locally compact Hausdorff spaces, in such a way that 
\[
\Gamma_c(-, \underline{\bE})\colon \Span_{\pret}(\LCH)\to \Spec
\]
is the ``model functor''. This is a generalization of a theorem attributed to Clausen in~\cite{knp-sheaves-on-manifolds}*{Theorem 3.6.9 and Remark 3.6.10}. A proof of this fact will appear in our upcoming work~\cite{py:upcoming}.

\raggedright
\begin{bibdiv}
\begin{biblist}

\bib{adams:shgh}{book}{
      author={Adams, J.~F.},
       title={Stable homotopy and generalised homology},
      series={Chicago {Lectures} in {Mathematics}},
   publisher={{The} {University} of {Chicago} {Press}},
        date={1974},
}

\bib{renroch:amgrp}{article}{
      author={Anantharaman-Delaroche, C.},
      author={Renault, J.},
       title={Amenable groupoids},
        date={2000},
      volume={36},
       pages={196},
        note={With a foreword by G.~Skandalis and Appendix B by E.~Germain},
}

\bib{akm:grcorr}{article}{
      author={Antunes, Celso},
      author={Ko, Joanna},
      author={Meyer, Ralf},
       title={The bicategory of groupoid correspondences},
        date={2022},
     journal={New York J. Math.},
      volume={28},
       pages={1329\ndash 1364},
         url={nyjm.albany.edu/j/2022/28-56.html},
}

\bib{ati:char}{article}{
      author={Atiyah, Michael~F.},
       title={Characters and cohomology of finite groups},
        date={1961},
        ISSN={0073-8301},
     journal={Publ. Math., Inst. Hautes {\'E}tud. Sci.},
      volume={9},
       pages={247\ndash 288},
         doi={10.1007/BF02698718},
}

\bib{bbd:fais}{incollection}{
      author={{Beilinson}, A.~A.},
      author={{Bernstein}, J.},
      author={{Deligne}, P.},
       title={{Faisceaux pervers}},
        date={1982},
   booktitle={Analyse et topologie sur les espaces singuliers. cirm, 6 - 10
  juillet 1981. (actes du colloque de luminy 1981). i},
   publisher={Astérisque},
}

\bib{bc:chern}{incollection}{
      author={Baum, Paul},
      author={Connes, Alain},
       title={Chern character for discrete groups},
        date={1988},
   booktitle={A f\^ete of topology},
   publisher={Academic Press},
     address={Boston, MA},
       pages={163\ndash 232},
      review={\MR{MR928402 (90e:58149)}},
}

\bib{BCH:class}{incollection}{
      author={Baum, Paul},
      author={Connes, Alain},
      author={Higson, Nigel},
       title={Classifying space for proper actions and {$K$}-theory of group
  {$C^\ast $}-algebras},
        date={1994},
   booktitle={{$C^\ast $}-algebras: 1943--1993 ({S}an {A}ntonio, {TX}, 1993)},
      series={Contemp. Math.},
      volume={167},
   publisher={Amer. Math. Soc., Providence, RI},
       pages={240\ndash 291},
}

\bib{bdgw:matui}{article}{
      author={Bönicke, Christian},
      author={Dell'Aiera, Clément},
      author={Gabe, James},
      author={Willett, Rufus},
       title={Dynamic asymptotic dimension and {M}atui's {HK} conjecture},
        date={2023},
     journal={Proceedings of the London Mathematical Society},
      volume={126},
      number={4},
       pages={1182\ndash 1253},
         doi={https://doi.org/10.1112/plms.12510},
}

\bib{bel:kkstable}{misc}{
      author={Bunke, Ulrich},
      author={Engel, Alexander},
      author={Land, Markus},
       title={A stable {$\infty$}-category for equivariant {KK}-theory},
         how={preprint},
        date={2021},
      eprint={\href{http://arxiv.org/abs/2102.13372}{\texttt{arXiv:2102.13372
  [math.OA]}}},
}

\bib{berg:model}{article}{
      author={Bergner, Julia~E.},
       title={A model category structure on the category of simplicial
  categories},
        date={2007},
        ISSN={0002-9947},
     journal={Trans. Am. Math. Soc.},
      volume={359},
      number={5},
       pages={2043\ndash 2058},
         doi={10.1090/S0002-9947-06-03987-0},
}

\bib{bf:spectra}{inproceedings}{
      author={Bousfield, A.~K.},
      author={Friedlander, E.~M.},
       title={Homotopy theory of {$\Gamma $}-spaces, spectra, and bisimplicial
  sets},
        date={1978},
   booktitle={Geometric applications of homotopy theory ({P}roc. {C}onf.,
  {E}vanston, {I}ll., 1977), {II}},
      series={Lecture Notes in Math.},
      volume={658},
   publisher={Springer, Berlin},
       pages={80\ndash 130},
      review={\MR{513569}},
}

\bib{bjs:spec}{incollection}{
      author={Bunke, Ulrich},
      author={Joachim, Michael},
      author={Stolz, Stephan},
       title={Classifying spaces and spectra representing the {$K$}-theory of a
  graded {C}{$^*$}-algebra},
        date={2003},
   booktitle={High-dimensional manifold topology},
       pages={80\ndash 102},
         doi={10.1142/9789812704443\textunderscore 0003},
}

\bib{bn:tot}{article}{
      author={B{\"o}kstedt, Marcel},
      author={Neeman, Amnon},
       title={Homotopy limits in triangulated categories},
        date={1993},
     journal={Compos. Math.},
      volume={86},
      number={2},
       pages={209\ndash 234},
}

\bib{bow:mA}{article}{
      author={Bowen, Rufus},
       title={Markov partitions for {A}xiom {${\protect \rm A}$}
  diffeomorphisms},
        date={1970},
     journal={Amer. J. Math.},
      volume={92},
       pages={725\ndash 747},
         doi={10.2307/2373370},
}

\bib{val:kthpgrp}{article}{
      author={B\"{o}nicke, Christian},
      author={Proietti, Valerio},
       title={Categorical approach to the {B}aum--{C}onnes conjecture for
  \'{e}tale groupoids},
        date={2024},
     journal={Journal of the Institute of Mathematics of Jussieu},
      volume={23},
      number={5},
       pages={2319\ndash 2364},
         doi={10.1017/S1474748023000531},
}

\bib{MR341469}{article}{
      author={Brown, Kenneth~S.},
       title={Abstract homotopy theory and generalized sheaf cohomology},
        date={1973},
        ISSN={0002-9947},
     journal={Trans. Amer. Math. Soc.},
      volume={186},
       pages={419\ndash 458},
         url={https://doi-org.ezproxy.uio.no/10.2307/1996573},
         doi={10.2307/1996573},
      review={\MR {341469}},
}

\bib{brown:fp}{incollection}{
      author={Brown, Kenneth~S.},
       title={The geometry of finitely presented infinite simple groups},
        date={1992},
       pages={121\ndash 136},
         doi={10.1007/978-1-4613-9730-4\protect \_5},
      review={\MR {1230631}},
}

\bib{bs:chern}{article}{
      author={Baum, P.},
      author={Schneider, P.},
       title={Equivariant-bivariant {Chern} character for profinite groups},
        date={2002},
     journal={\protect \(K\protect \)-Theory},
      volume={25},
      number={4},
       pages={313\ndash 353},
}

\bib{cfh:kirchample}{article}{
      author={Clark, Lisa~Orloff},
      author={Fletcher, James},
      author={an~Huef, Astrid},
       title={All classifiable {Kirchberg} algebras are {{\protect \(C^\ast
  \protect \)}}-algebras of ample groupoids},
        date={2020},
     journal={Expo. Math.},
      volume={38},
      number={4},
       pages={559\ndash 565},
         doi={10.1016/j.exmath.2019.06.001},
}

\bib{cis:ha}{book}{
      author={Cisinski, Denis-Charles},
       title={Higher categories and homotopical algebra},
      series={Cambridge studies in advanced mathematics},
   publisher={Cambridge University Press},
        date={2019},
      volume={180},
        ISBN={9781108588737},
}

\bib{cramo:hom}{article}{
      author={Crainic, Marius},
      author={Moerdijk, Ieke},
       title={A homology theory for {\'e}tale groupoids},
        date={2000},
        ISSN={0075-4102},
     journal={J. Reine Angew. Math.},
      volume={521},
       pages={25\ndash 46},
         doi={10.1515/crll.2000.029},
}

\bib{connes:ncg}{book}{
      author={Connes, Alain},
       title={Noncommutative geometry},
   publisher={Academic Press Inc.},
        date={1994},
        ISBN={0-12-185860-X},
      review={\MR {MR1303779 (95j:46063)}},
}

\bib{MR1706117}{article}{
      author={Crainic, Marius},
       title={Cyclic cohomology of {\'e}tale groupoids: the general case},
        date={1999},
        ISSN={0920-3036},
     journal={$K$-Theory},
      volume={17},
      number={4},
       pages={319\ndash 362},
         url={http://dx.doi.org/10.1023/A:1007756702025},
         doi={10.1023/A:1007756702025},
      review={\MR {1706117 (2000j:19002)}},
}

\bib{cuntz:otwo}{article}{
      author={Cuntz, Joachim},
       title={Simple {$C*$}-algebras generated by isometries},
        date={1977},
     journal={{Commun. math. Phys.}},
      volume={57},
       pages={173\ndash 185},
}

\bib{deeley:counthk}{article}{
      author={Deeley, Robin~J.},
       title={A counterexample to the {HK}-conjecture that is principal},
        date={2023},
     journal={Ergodic Theory Dyn. Syst.},
      volume={43},
      number={6},
       pages={1829\ndash 1846},
         doi={10.1017/etds.2022.25},
}

\bib{dekm:spec}{misc}{
      author={Dell'Ambrogio, Ivo},
      author={Emerson, Heath},
      author={Kandelaki, Tamaz},
      author={Meyer, Ralf},
       title={A functorial equivariant {K}-theory spectrum and an equivariant
  {L}efschetz formula},
         how={preprint},
        date={2011},
      eprint={\href{http://arxiv.org/abs/1104.3441}{\texttt{arXiv:1104.3441
  [math.KT]}}},
}

\bib{dwkan:function}{article}{
      author={Dwyer, W.~G.},
      author={Kan, D.~M.},
       title={Function complexes in homotopical algebra},
        date={1980},
     journal={Topology},
      volume={19},
      number={4},
       pages={427\ndash 440},
         doi={https://doi.org/10.1016/0040-9383(80)90025-7},
}

\bib{dl:bc}{article}{
      author={Davis, James~F.},
      author={L{\"u}ck, Wolfgang},
       title={Spaces over a category and assembly maps in isomorphism
  conjectures in {{\protect \(K\protect \)}}- and {{\protect \(L\protect
  \)}}-theory},
        date={1998},
     journal={\protect \(K\protect \)-Theory},
      volume={15},
      number={3},
       pages={201\ndash 252},
         doi={10.1023/A:1007784106877},
}

\bib{dold:ch}{article}{
      author={Dold, A.},
       title={Relations between ordinary and extraordinary homology},
    language={English},
        date={1962},
     journal={Colloq. algebr. {Topology}, {Aarhus}},
      volume={2},
      number={9},
}

\bib{dug:thm}{article}{
      author={Dugger, Daniel},
       title={Combinatorial model categories have presentations},
        date={2001},
        ISSN={0001-8708},
     journal={Adv. Math.},
      volume={164},
      number={1},
       pages={177\ndash 201},
         doi={10.1006/aima.2001.2015},
}

\bib{MR1897816}{incollection}{
      author={Duskin, John~W.},
       title={Simplicial matrices and the nerves of weak {$n$}-categories. {I}.
  {N}erves of bicategories},
        date={2001},
      volume={9},
       pages={198\ndash 308},
        note={CT2000 Conference (Como)},
}

\bib{dw:hk}{book}{
      author={Deeley, Robin},
      author={Willett, Rufus},
       title={The rational {HK}-conjecture: transformation groupoids and a
  revised version},
        date={2024},
        note={To appear in Münster Journal of Mathematics},
}

\bib{ekmm:modern}{incollection}{
      author={Elmendorf, A.~D.},
      author={K{\v {r}}{\'{\i }}{\v {z}}, Igor},
      author={Mandell, Michael~A.},
      author={May, J.~P.},
       title={Modern foundations for stable homotopy theory},
        date={1995},
   booktitle={Handbook of algebraic topology},
   publisher={Amsterdam: North-Holland},
       pages={213\ndash 253},
}

\bib{MR2511635}{article}{
      author={Echterhoff, Siegfried},
      author={Nest, Ryszard},
      author={Oyono-Oyono, Herv{\'e}},
       title={Fibrations with noncommutative fibers},
        date={2009},
        ISSN={1661-6952},
     journal={J. Noncommut. Geom.},
      volume={3},
      number={3},
       pages={377\ndash 417},
      eprint={\href{http://arxiv.org/abs/0810.0118}{\texttt{arXiv:0810.0118
  [math.KT]}}},
         url={http://dx.doi.org/10.4171/JNCG/41},
         doi={10.4171/JNCG/41},
      review={\MR{2511635 (2010g:19004)}},
}

\bib{fkps:hk}{article}{
      author={Farsi, Carla},
      author={Kumjian, Alex},
      author={Pask, David},
      author={Sims, Aidan},
       title={Ample groupoids: equivalence, homology, and {Matui}'s {HK}
  conjecture},
        date={2019},
     journal={M{\"u}nster J. Math.},
      volume={12},
      number={2},
       pages={411\ndash 451},
         doi={10.17879/53149724091},
}

\bib{god:fais}{book}{
      author={Godement, Roger},
       title={Topologie alg{\'e}brique et th{\'e}orie des faisceaux. 3e {\'e}d.
  revue et corrig{\'e}e},
      series={Publ. Inst. Math. Univ. Strasbourg},
   publisher={Hermann, Paris},
        date={1973},
      volume={13},
}

\bib{gps:full}{article}{
      author={Giordano, Thierry},
      author={Putnam, Ian~F.},
      author={Skau, Christian~F.},
       title={Full groups of {Cantor} minimal systems},
        date={1999},
     journal={Isr. J. Math.},
      volume={111},
       pages={285\ndash 320},
         doi={10.1007/BF02810689},
}

\bib{harp:amb}{article}{
      author={Harpaz, Yonatan},
       title={Ambidexterity and the universality of finite spans},
        date={2020},
     journal={Proceedings of the London Mathematical Society},
      volume={121},
      number={5},
       pages={1121\ndash 1170},
         doi={https://doi.org/10.1112/plms.12367},
}

\bib{hin:dwkanrev}{article}{
      author={Hinich, Vladimir},
       title={Dwyer-{Kan} localization revisited},
        date={2016},
     journal={Homology Homotopy Appl.},
      volume={18},
      number={1},
       pages={27\ndash 48},
         doi={10.4310/HHA.2016.v18.n1.a3},
}

\bib{hirsch:model}{book}{
      author={Hirschhorn, Philip~S.},
       title={Model categories and their localizations},
      series={Math. Surv. Monogr.},
   publisher={American Mathematical Society, Providence, RI},
        date={2003},
      volume={99},
        ISBN={0-8218-3279-4},
}

\bib{higkas:bc}{article}{
      author={{Higson}, Nigel},
      author={{Kasparov}, Gennadi},
       title={{\protect \(E\protect \)-theory and \protect \(KK\protect
  \)-theory for groups which act properly and isometrically on Hilbert
  space.}},
        date={2001},
     journal={{Invent. Math.}},
      volume={144},
      number={1},
       pages={23\ndash 74},
}

\bib{MR1911663}{article}{
      author={Higson, N.},
      author={Lafforgue, V.},
      author={Skandalis, G.},
       title={Counterexamples to the {B}aum-{C}onnes conjecture},
        date={2002},
        ISSN={1016-443X},
     journal={Geom. Funct. Anal.},
      volume={12},
      number={2},
       pages={330\ndash 354},
         url={http://dx.doi.org/10.1007/s00039-002-8249-5},
         doi={10.1007/s00039-002-8249-5},
      review={\MR {1911663 (2003g:19007)}},
}

\bib{huber:cechcoh}{article}{
      author={Huber, P.~J.},
       title={Homotopical cohomology and Čech cohomology},
        date={1961},
     journal={Math. Ann.},
      volume={144},
       pages={73\ndash 76},
}

\bib{kan:ex}{article}{
      author={Kan, Daniel~M.},
       title={On c. s. s. complexes},
        date={1957},
     journal={Am. J. Math.},
      volume={79},
       pages={449\ndash 476},
}

\bib{karoubi-cartan-seminar-16}{book}{
      author={Karoubi, Max},
       title={Les isomorphismes de {C}hern et de {T}hom-{G}ysin en
  {K}-th{\'e}orie},
        date={1964},
        note={S\'{e}minaire {H}enri {C}artan, tome 16, no. 2 (1963--1964), exp.
  no. 16, pp. 1--16},
}

\bib{MR261592}{article}{
      author={Karoubi, Max},
       title={Espaces classifiants en {$K$}-th\'{e}orie},
        date={1970},
     journal={Trans. Amer. Math. Soc.},
      volume={147},
       pages={75\ndash 115},
         doi={10.2307/1995218},
}

\bib{klmms:scissors}{book}{
      author={Kupers, Alexander},
      author={Lemann, Ezekiel},
      author={Malkiewich, Cary},
      author={Miller, Jeremy},
      author={Sroka, Robin~J.},
       title={Scissors automorphism groups and their homology},
        date={2024},
}

\bib{knp-sheaves-on-manifolds}{book}{
      author={Krause, Achim},
      author={Nikolaus, Thomas},
      author={P{\"u}tzst{\"u}ck, Phil},
       title={Sheaves on manifolds},
        date={2024},
        note={available at \protect \href
  {https://www.uni-muenster.de/IVV5WS/WebHop/user/nikolaus/Papers/sheaves-on-manifolds.pdf}{this
  webpage}},
}

\bib{kelput:til}{incollection}{
      author={Kellendonk, Johannes},
      author={Putnam, Ian~F.},
       title={Tilings, {{\protect \(C^*\protect \)}}-algebras, and {{\protect
  \(K\protect \)}}-theory},
        date={2000},
   booktitle={Directions in mathematical quasicrystals},
   publisher={American Mathematical Society, Providence, RI},
       pages={177\ndash 206},
}

\bib{kri:dim}{article}{
      author={Krieger, Wolfgang},
       title={On a dimension for a class of homeomorphism groups},
        date={1980},
     journal={Math. Ann.},
      volume={252},
       pages={87\ndash 95},
         doi={10.1007/BF01420115},
}

\bib{kum:diag}{article}{
      author={Kumjian, Alexander},
       title={On {{\protect \(C^ *\protect \)}}-diagonals},
        date={1986},
     journal={Can. J. Math.},
      volume={38},
       pages={969\ndash 1008},
         doi={10.4153/CJM-1986-048-0},
}

\bib{laf:kkban}{article}{
      author={Lafforgue, Vincent},
       title={Bivariant {{\protect \(K\protect \)}}-theory for {Banach}
  algebras groupoids and {Baum}-{Connes} conjecture. {With} an appendix by
  {Herv{\'e}} {Oyono}-{Oyono}},
    language={French},
        date={2007},
     journal={J. Inst. Math. Jussieu},
      volume={6},
      number={3},
       pages={415\ndash 451},
         doi={10.1017/S1474748007000084},
}

\bib{land:inftybook}{book}{
      author={Land, Markus},
       title={Introduction to infinity-categories},
      series={Compact Textb. Math.},
   publisher={Cham: Birkh{\"a}user},
        date={2021},
         doi={10.1007/978-3-030-61524-6},
}

\bib{lg:kk}{incollection}{
      author={Le~Gall, Pierre-Yves},
       title={Groupoid {$C^*$}-algebras and operator {$K$}-theory},
        date={2001},
   booktitle={Groupoids in analysis, geometry, and physics ({B}oulder, {CO},
  1999)},
      series={Contemp. Math.},
      volume={282},
   publisher={Amer. Math. Soc., Providence, RI},
       pages={137\ndash 145},
         doi={10.1090/conm/282/04683},
}

\bib{li:diag}{article}{
      author={Li, Xin},
       title={Every classifiable simple {$C^*$}-algebra has a cartan
  subalgebra},
        date={2020},
     journal={Invent. Math.},
      volume={219},
      number={2},
       pages={653\ndash 699},
}

\bib{li:algk}{article}{
      author={Li, Xin},
       title={Ample groupoids, topological full groups, algebraic k-theory
  spectra and infinite loop spaces},
        date={2025},
     journal={Forum of Mathematics, Pi},
      volume={13},
         doi={10.1017/fmp.2024.31},
}

\bib{landnik:comp}{article}{
      author={Land, Markus},
      author={Nikolaus, Thomas},
       title={On the relation between {$K$}- and {$L$}-theory of
  {$C^*$}-algebras},
        date={2018},
     journal={Math. Ann.},
      volume={371},
      number={1-2},
       pages={517\ndash 563},
         doi={10.1007/s00208-017-1617-0},
}

\bib{lueck:chern}{article}{
      author={L{\"u}ck, Wolfgang},
       title={Chern characters for proper equivariant homology theories and
  applications to k- and l-theory},
        date={2002},
     journal={Journal für die reine und angewandte Mathematik},
      volume={543},
       pages={193\ndash 234},
         doi={doi:10.1515/crll.2002.015},
}

\bib{lur:ht}{book}{
      author={Lurie, Jacob},
       title={Higher topos theory},
      series={Annals of Mathematics Studies},
   publisher={Princeton University Press, Princeton, NJ},
        date={2009},
      volume={170},
        ISBN={978-0-691-14049-0; 0-691-14049-9},
}

\bib{lur:ha}{book}{
      author={Lurie, Jacob},
       title={Higher algebra},
        date={2017},
        note={available at the author's \href{https://www.math.ias.edu/~lurie/papers/HA.pdf}{webpage}}
}

\bib{lurie-kerodon}{misc}{
       author={Lurie, Jacob},
       title={Kerodon},
       how={Website},
       date={2025},
       url={https://kerodon.net},
}

\bib{matui:hk}{article}{
      author={Matui, Hiroki},
       title={{Homology and topological full groups of \'etale groupoids on
  totally disconnected spaces.}},
        date={2012},
     journal={{Proc. Lond. Math. Soc. (3)}},
      volume={104},
      number={1},
       pages={27\ndash 56},
}

\bib{matui:fgsft}{article}{
      author={Matui, Hiroki},
       title={Topological full groups of one-sided shifts of finite type},
        date={2015},
     journal={J. Reine Angew. Math.},
      volume={705},
       pages={35\ndash 84},
         doi={10.1515/crelle-2013-0041},
}

\bib{matui:hkpub}{article}{
      author={Matui, Hiroki},
       title={{\'E}tale groupoids arising from products of shifts of finite
  type},
        date={2016},
     journal={Adv. Math.},
      volume={303},
       pages={502\ndash 548},
         doi={10.1016/j.aim.2016.08.023},
}

\bib{may:concise}{book}{
      author={May, J.~P.},
       title={A concise course in algebraic topology},
   publisher={University of Chicago Press, Chicago, IL},
        date={1999},
}

\bib{mey:two}{article}{
      author={Meyer, Ralf},
       title={Homological algebra in bivariant {{\protect \(K\protect
  \)}}-theory and other triangulated categories. {II}},
        date={2008},
     journal={Tbil. Math. J.},
      volume={1},
       pages={165\ndash 210},
}

\bib{arXiv:2508.21601}{misc}{
      author={Meyer, Ralf},
       title={Bicategories of {C$^*$}-correspondences as {D}wyer--{K}an
  localisations},
         how={preprint},
        date={2025},
      eprint={\href{http://arxiv.org/abs/2508.21601}{\texttt{arXiv:2508.21601
  [math.OA]}}},
}

\bib{magee:quillen}{article}{
      author={Mazel-Gee, Aaron},
       title={Quillen adjunctions induce adjunctions of quasicategories},
        date={2016},
     journal={New York J. Math.},
      volume={22},
       pages={57\ndash 93},
}

\bib{ali:homcor}{misc}{
      author={Miller, Alistair},
       title={Ample groupoid homology and {\'e}tale correspondences},
         how={preprint},
        date={2023},
      eprint={\href{http://arxiv.org/abs/2304.13473}{\texttt{arXiv:2304.13473
  [math.KT]}}},
        note={to appear in J. Noncommut. Geom.},
}

\bib{MR1806878}{article}{
      author={Mandell, M.~A.},
      author={May, J.~P.},
      author={Schwede, S.},
      author={Shipley, B.},
       title={Model categories of diagram spectra},
        date={2001},
        ISSN={0024-6115},
     journal={Proc. London Math. Soc. (3)},
      volume={82},
      number={2},
       pages={441\ndash 512},
         url={https://doi.org/10.1112/S0024611501012692},
         doi={10.1112/S0024611501012692},
      review={\MR {1806878}},
}

\bib{nestmeyer:loc}{article}{
      author={Meyer, Ralf},
      author={Nest, Ryszard},
       title={The {B}aum--{C}onnes conjecture via localisation of categories},
        date={2006},
     journal={Topology},
      volume={45},
      number={2},
       pages={209\ndash 259},
         doi={10.1016/j.top.2005.07.001},
}

\bib{moe:weakhomtype}{incollection}{
      author={Moerdijk, Ieke},
       title={On the weak homotopy type of \'{e}tale groupoids},
        date={1997},
   booktitle={Integrable systems and foliations/{F}euilletages et syst\`emes
  int\'{e}grables ({M}ontpellier, 1995)},
      series={Progr. Math.},
      volume={145},
   publisher={Birkh\"{a}user Boston, Boston, MA},
       pages={147\ndash 156},
         doi={10.1007/978-1-4612-4134-8\protect \_8},
}

\bib{murewi:morita}{article}{
      author={Muhly, Paul~S.},
      author={Renault, Jean~N.},
      author={Williams, Dana~P.},
       title={Equivalence and isomorphism for groupoid {$C^\ast $}-algebras},
        date={1987},
     journal={J. Operator Theory},
      volume={17},
      number={1},
       pages={3\ndash 22},
}

\bib{bm:hom}{misc}{
 author = {Miller, Alistair},
 author = {Steinberg, Benjamin},
 title = {Homology and K-theory for self-similar actions of groups and groupoids},
 eprint={\href{https://arxiv.org/abs/2409.02359}{\texttt{arXiv:2409.02359 [math.OA]}}},
 how={Preprint},
 date = {2024},
}

\bib{nekra:v}{article}{
      author={Nekrashevych, Volodymyr~V.},
       title={Cuntz-{Pimsner} algebras of group actions},
        date={2004},
     journal={J. Oper. Theory},
      volume={52},
      number={2},
       pages={223\ndash 249},
}

\bib{nekra:crelle}{article}{
      author={Nekrashevych, Volodymyr},
       title={{{\protect \(C^*\protect \)}}-algebras and self-similar groups},
        date={2009},
     journal={J. Reine Angew. Math.},
      volume={630},
       pages={59\ndash 123},
         doi={10.1515/CRELLE.2009.035},
}

\bib{nekra:fp}{incollection}{
      author={Nekrashevych, Volodymyr},
       title={Finitely presented groups associated with expanding maps},
        date={2018},
       pages={115\ndash 171},
      review={\MR {3822291}},
}

\bib{nekra:int}{article}{
      author={Nekrashevych, Volodymyr},
       title={Palindromic subshifts and simple periodic groups of intermediate
  growth},
        date={2018},
     journal={Ann. Math.},
      volume={187},
      number={3},
       pages={667\ndash 719},
         doi={10.4007/annals.2018.187.3.2},
}

\bib{ort:kep}{article}{
      author={Ortega, Eduard},
       title={The homology of the {Katsura}-{Exel}-{Pardo} groupoid},
        date={2020},
     journal={J. Noncommut. Geom.},
      volume={14},
      number={3},
       pages={913\ndash 935},
}

\bib{put:spiel}{article}{
      author={Putnam, Ian~F.},
      author={Spielberg, Jack},
       title={The structure of {$C^*$}-algebras associated with hyperbolic
  dynamical systems},
        date={1999},
     journal={Journal of Functional Analysis},
      volume={163},
      number={2},
       pages={279\ndash 299},
         doi={https://doi.org/10.1006/jfan.1998.3379},
}

\bib{put:funct}{article}{
      author={Putnam, Ian~F.},
       title={Functoriality of the {$C^*$}‐algebras associated with
  hyperbolic dynamical systems},
        date={2000},
     journal={Journal of the London Mathematical Society},
      volume={62},
      number={3},
       pages={873\ndash 884},
         doi={10.1112/S002461070000140X},
}

\bib{put:HoSmale}{article}{
      author={Putnam, Ian~F.},
       title={A homology theory for {S}male spaces},
        date={2014},
     journal={Mem. Amer. Math. Soc.},
      volume={232},
      number={1094},
       pages={viii+122},
         url={https://doi.org/10.1090/memo/1094},
}

\bib{put:algSmale}{article}{
      author={Putnam, Ian~F.},
       title={{$C^*$}-algebras from {S}male spaces},
        date={1996},
     journal={Canad. J. Math.},
      volume={48},
      number={1},
       pages={175\ndash 195},
         doi={10.4153/CJM-1996-008-2},
}

\bib{pv:cycg}{misc}{
      author={Pagliuca, Francesco},
      author={Voigt, Christian},
      title={Equivariant cyclic homology for ample groupoids},
      how={Preprint},
      year={2025},
      eprint={\href{https://arxiv.org/pdf/2506.06503}{\texttt{arXiv:2506.06503[math.KT]}}},
}

\bib{py:part-one}{article}{
      author={Proietti, Valerio},
      author={Yamashita, Makoto},
       title={Homology and {$K$}-theory of dynamical systems {I}.
  {T}orsion-free ample groupoids},
        date={2022},
     journal={Ergodic Theory Dynam. Systems},
      volume={42},
      number={8},
       pages={2630\ndash 2660},
         doi={10.1017/etds.2021.50},
         label={PYI}
}

\bib{py:part-two}{article}{
      author={Proietti, Valerio},
      author={Yamashita, Makoto},
       title={Homology and {$K$}-theory of dynamical systems {II}. {S}male
  spaces with totally disconnected transversal},
        date={2023},
     journal={J. Noncommut. Geom.},
      volume={17},
      number={3},
       pages={957\ndash 998},
         doi={10.4171/jncg/494},
         label={PYII}
}

\bib{py:part-three}{article}{
      author={Proietti, Valerio},
      author={Yamashita, Makoto},
       title={Homology and {K}-theory of dynamical systems {III}. beyond stably
  disconnected smale spaces},
        date={2025},
     journal={Transactions of the American Mathematical Society},
      volume={378},
       pages={2129\ndash 2155},
         doi={10.1090/tran/9353},
         label={PYIII}
}

\bib{py:upcoming}{misc}{
      author={Proietti, Valerio},
      author={Yamashita, Makoto},
      title={Étale groupoids, operator {$\rK$}-theory, and higher sheaves},
      how={In preparation},
      year={2026},
}

\bib{MR0338129}{incollection}{
      author={Quillen, Daniel},
       title={Higher algebraic {$K$}-theory. {I}},
        date={1973},
   booktitle={Algebraic {$K$}-theory, {I}: {H}igher {$K$}-theories ({P}roc.
  {C}onf., {B}attelle {M}emorial {I}nst., {S}eattle, {W}ash., 1972)},
      series={Lecture Notes in Math., Vol. 341},
   publisher={Springer, Berlin},
       pages={85\ndash 147},
}

\bib{raven:phd}{thesis}{
      author={Raven, Jeff},
       title={An equivariant bivariant chern character},
        type={Ph.D. Thesis},
organization={The Pennsylvania State University},
      school={The Pennsylvania State University},
        date={2004},
}

\bib{ren:cartan}{article}{
      author={Renault, Jean},
       title={Cartan subalgebras in {{\protect \(C^*\protect \)}}-algebras},
        date={2008},
     journal={Ir. Math. Soc. Bull.},
      volume={61},
       pages={29\ndash 63},
}

\bib{ren:group}{book}{
      author={Renault, Jean},
       title={A groupoid approach to {$C^\ast $}-algebras},
      series={Lecture Notes in Mathematics},
   publisher={Springer, Berlin},
        date={1980},
      volume={793},
}

\bib{ruelle:thermo}{book}{
      author={Ruelle, David},
       title={Thermodynamic formalism},
     edition={Second},
      series={Cambridge Mathematical Library},
   publisher={Cambridge University Press, Cambridge},
        date={2004},
         doi={10.1017/CBO9780511617546},
        note={The mathematical structures of equilibrium statistical
  mechanics},
}

\bib{MR3415698}{article}{
      author={Riehl, Emily},
      author={Verity, Dominic},
       title={Homotopy coherent adjunctions and the formal theory of monads},
        date={2016},
     journal={Adv. Math.},
      volume={286},
       pages={802\ndash 888},
         doi={10.1016/j.aim.2015.09.011},
}

\bib{scarparo:homod}{article}{
      author={Scarparo, Eduardo},
       title={Homology of odometers},
        date={2020},
     journal={Ergodic Theory Dyn. Syst.},
      volume={40},
      number={9},
       pages={2541\ndash 2551},
         doi={10.1017/etds.2019.13},
}

\bib{schede-book-draft-symm-spec}{misc}{
      author={Schwede, Stefan},
       title={Symmetric spectra},
         how={manuscript},
        date={2012},
        note={available from the author's
  \href{https://www.math.uni-bonn.de/people/schwede/SymSpec-v3.pdf}{website}.},
}

\bib{segal:catcoh}{article}{
      author={Segal, Graeme},
       title={Categories and cohomology theories},
        date={1974},
     journal={Topology},
      volume={13},
       pages={293\ndash 312},
         doi={10.1016/0040-9383(74)90022-6},
}

\bib{spiel:kirchgraph}{article}{
      author={Spielberg, Jack},
       title={Graph-based models for {Kirchberg} algebras},
        date={2007},
     journal={J. Oper. Theory},
      volume={57},
      number={2},
       pages={347\ndash 374},
}

\bib{stein:amplesheafmod}{article}{
      author={Steinberg, Benjamin},
       title={Modules over \'{e}tale groupoid algebras as sheaves},
        date={2014},
     journal={J. Aust. Math. Soc.},
      volume={97},
      number={3},
       pages={418\ndash 429},
         doi={10.1017/S1446788714000342},
}

\bib{sw:ht}{article}{
      author={Szymik, M.},
      author={Wahl, N.},
       title={The homology of the higman--thompson groups},
        date={2019},
     journal={Invent. Math.},
      volume={216},
       pages={445\ndash 518},
}

\bib{suswod:karoubi}{article}{
      author={Suslin, Andrei~A.},
      author={Wodzicki, Mariusz},
       title={Excision in algebraic {{\protect \(K\protect \)}}-theory},
        date={1992},
     journal={Ann. Math.},
      volume={136},
      number={1},
       pages={51\ndash 122},
         doi={10.2307/2946546},
}

\bib{swz:typef}{article}{
      author={Skipper, Rachel},
      author={Witzel, Stefan},
      author={Zaremsky, Matthew C.~B.},
       title={Simple groups separated by finiteness properties},
        date={2019},
     journal={Invent. Math.},
      volume={215},
      number={2},
       pages={713\ndash 740},
         doi={10.1007/s00222-018-0835-8},
}

\bib{tak:fell}{article}{
      author={Takeishi, Takuya},
       title={On nuclearity of {{\protect \(C^*\protect \)}}-algebras of {Fell}
  bundles over {\'e}tale groupoids},
        date={2014},
     journal={Publ. Res. Inst. Math. Sci.},
      volume={50},
      number={2},
       pages={251\ndash 268},
         doi={10.4171/PRIMS/132},
}

\bib{tu:moy}{article}{
      author={Tu, Jean-Louis},
       title={La conjecture de {B}aum-{C}onnes pour les feuilletages
  moyennables},
        date={1999},
     journal={$K$-Theory},
      volume={17},
      number={3},
       pages={215\ndash 264},
         doi={10.1023/A:1007744304422},
}

\bib{tu:hyp}{article}{
      author={Tu, Jean~Louis},
       title={The {Novikov} conjecture for hyperbolic foliations},
    language={French},
        date={1999},
     journal={\protect \(K\protect \)-Theory},
      volume={16},
      number={2},
       pages={129\ndash 184},
}

\bib{uuye:hom}{article}{
      author={Uuye, O.},
       title={Homotopy algebra for c$^*$-algebras},
        date={2013},
     journal={J. Noncommut. Geom.},
      volume={7},
      number={4},
       pages={981\ndash 1006},
}

\bib{willerp:parae}{article}{
      author={van Erp, Erik},
      author={Williams, Dana~P.},
       title={Groupoid crossed products of continuous-trace {{\protect \(C^\ast
  \protect \)}}-algebras},
        date={2014},
     journal={J. Oper. Theory},
      volume={72},
      number={2},
       pages={557\ndash 576},
         doi={10.7900/jot.2013sep04.2004},
}

\bib{voigt:chern}{article}{
      author={Voigt, Christian},
       title={Chern character for totally disconnected groups},
        date={2009},
     journal={Math. Ann.},
      volume={343},
      number={3},
       pages={507\ndash 540},
         doi={10.1007/s00208-008-0281-9},
}

\bib{dw:toolkit}{book}{
      author={Williams, Dana~P.},
       title={A tool kit for groupoid ${C}^*$-algebras},
      series={Math. Surv. Monogr.},
   publisher={American Mathematical Society, Providence, RI},
        date={2019},
      volume={241},
        ISBN={978-1-4704-5133-2; 978-1-4704-5409-8},
         doi={10.1090/surv/241},
}

\bib{yi:sol}{article}{
      author={Yi, Inhyeop},
       title={Homology and {Matui}'s {HK} conjecture for groupoids on
  one-dimensional solenoids},
        date={2020},
     journal={Bull. Aust. Math. Soc.},
      volume={101},
      number={1},
       pages={105\ndash 117},
         doi={10.1017/S0004972719000522},
}

\bib{zach:assk}{article}{
      author={Zakharevich, Inna},
       title={The k-theory of assemblers},
        date={2017},
     journal={Advances in Mathematics},
      volume={304},
       pages={1176\ndash 1218},
         doi={https://doi.org/10.1016/j.aim.2016.08.045},
}

\end{biblist}
\end{bibdiv}

\end{document}